\documentclass[11pt]{article}
\topmargin      = -.25in
\textheight     = 8.55in
\oddsidemargin  = 0.25in
\evensidemargin = 0.25in
\textwidth      = 6.00in

\usepackage{amsmath,amstext,amssymb,amsfonts,amsthm}
\usepackage{mathrsfs}
\usepackage{subfigure}
\usepackage{epsfig}
\usepackage[ruled]{algorithm2e}

\usepackage{graphicx}
\usepackage{accents}
\usepackage{color}
\usepackage{bm}
\usepackage[export]{adjustbox}
\usepackage{epstopdf}

\newtheorem{lem}{Lemma}[section]
\newtheorem{thm}{Theorem}[section]

\newtheorem{Def}{Definiton}[section]
\newtheorem{exmp}{Example}

\numberwithin{equation}{section}
\numberwithin{figure}{section}

\newcommand{\be}{\begin{eqnarray}}
\newcommand{\ee}{\end{eqnarray}}
\newcommand{\ben}{\begin{eqnarray*}}
\newcommand{\een}{\end{eqnarray*}}
\newcommand{\beq}{\begin{equation}}
\newcommand{\eeq}{\end{equation}}

\newcommand{\jpl}{[\![}
\newcommand{\jpr}{]\!]}

\newcommand{\calt}{\mathcal{T}}

\newcommand{\lam}{\lambda}

\newcommand{\Om}{\Omega}
\newcommand{\cM}{\mathcal{M}}
\newcommand{\cT}{\mathcal{T}}
\newcommand{\Ga}{\Gamma}
\newcommand{\pa}{\partial}
\newcommand{\de}{\delta}
\newcommand{\supp}{{\rm supp}}
\newcommand{\cE}{\mathcal{E}}

\newcommand{\ls}{[\![}
\newcommand{\rs}{]\!]}
\newcommand{\E}{\mathcal{E}}
\newcommand{\cam}{\mathcal{M}}
\newcommand{\GaK}{\Gamma_{\!\!K}}
\newcommand{\la}{\langle}
\newcommand{\ra}{\rangle}
\newcommand{\R}{\mathbb{R}}
\newcommand{\na}{\nabla}
\newcommand{\al}{\alpha}
\newcommand{\lj}{[{\hskip -1.5pt} [}
\newcommand{\rj}{]{\hskip -1.5pt} ]}

\newcommand{\cS}{\mathcal{S}}
\newcommand{\cP}{\mathcal{P}}

\newcommand{\bbX}{\mathbb{X}}
\newcommand{\bs}{{\backslash}}

\newcommand{\nn}{\nonumber}

\allowdisplaybreaks[4]

\title{An arbitrarily high order unfitted finite element method for elliptic interface problems with automatic mesh generation, Part II. Piecewise-smooth interfaces\footnotemark[1]}
\author{
Zhiming Chen\footnotemark[2]
\and
Yong Liu\footnotemark[3]
}

\date{}

\begin{document}
\maketitle

\renewcommand{\thefootnote}{\fnsymbol{footnote}}
\footnotetext[1]{This work is supported in part by Strategic Priority Research Program of the Chinese Academy of Sciences under the Grant No. XDB0640000, China National Key Technologies R\&D Program under the grant 2019YFA0709602, China Natural Science Foundation under the grant 11831016, 12288201, 12201621 and the Youth Innovation Promotion Association (CAS).}
\footnotetext[2]{LSEC, Institute of Computational Mathematics,
Academy of Mathematics and System Sciences and School of Mathematical Science, University of
Chinese Academy of Sciences, Chinese Academy of Sciences,
Beijing 100190, China. E-mail: zmchen@lsec.cc.ac.cn}
\footnotetext[3]{LSEC, Institute of Computational Mathematics, Academy of Mathematics and Systems Science, Chinese Academy of Sciences, Beijing 100190, P.R. China.  E-mail: yongliu@lsec.cc.ac.cn}

\begin{center}
\small
\begin{minipage}{0.9\textwidth}
\textbf{Abstract.}

We consider the reliable implementation of an adaptive high-order unfitted finite element method on Cartesian meshes for solving elliptic interface problems with geometrically curved singularities.  We extend our previous work on the reliable cell merging algorithm for smooth interfaces to automatically generate the induced mesh for piecewise smooth interfaces. An $hp$ a posteriori error estimate is derived for a new unfitted finite element method whose finite element functions are conforming in each subdomain. Numerical examples illustrate the competitive performance of the method.

\medskip
\textbf{Key words.}
Unfitted finite element method, Cell merging, Piecewise-smooth interface
\medskip

\textbf{AMS classification}.
65N50, 65N30
\end{minipage}
\end{center}
\setlength{\parindent}{2em}
\section{Introduction}
\label{sec_intro}

We propose an adaptive high-order unfitted finite element method on Cartesian meshes with hanging nodes for solving elliptic interface problems, which releases the work of body-fitted mesh generation and allows us to design adaptive finite element methods for solving curved geometric singularities. When the geometry of the interface and domain is simple, such as polygons or polyhedrons, it is well-known that the adaptive finite element method based on a posteriori error estimates initiated in the seminar work \cite{Babuska1978} can achieve quasi-optimal computational complexity \cite{Binev, Nochetto, Stevenson}. The widely used newest vertex bisection algorithm \cite{Bansch} for simplicial meshes can guarantee the shape regularity of adaptively refined meshes starting from an initial shape regular mesh. For curved geometric singularities, however, the newest vertex bisection algorithm can no longer guarantee the shape regularity of the refined meshes. The purpose of this paper is to study an adaptive high-order finite element method and its reliable implementation for arbitrarily shaped piecewise $C^2$-smooth interfaces in the framework of unfitted finite element methods.

The unfitted finite element method in the discontinuous Galerkin (DG) framework is first proposed in \cite{Hansbo}, which is defined on a fixed background mesh and uses different finite element functions in different cut cells that are the intersection of the elements of the mesh with the physical domains. The method has attracted considerable interest in the literature and has been extended in several directions, e.g., the cut finite element method \cite{Burman2010,Burman2012,Hansbo}, the hybrid high order method \cite{Burman2018SINUM,BurmanHHO21}, and the aggregated unfitted finite element method \cite{Badia22,Badia2018}. The {\it small cut cell problem}, which is due to the arbitrarily small or anisotropic intersection of the interface with the elements, is solved by the ghost penalty  \cite{Burman2010, Burman2012, Gurken} or by merging small cut cells with surrounding elements  \cite{Johansson,Huang, Badia2018, Chen2023JCP}. We also refer to the extended finite element methods \cite{Belytschko2001}, the immersed boundary/interface method \cite{Mittal2005, LiNM2003, Li2006, Chen2009}, and the ghost fluid method \cite{LiuJCP2000} for other approaches in designing finite element/finite difference methods on fixed meshes.

In \cite{Chen2021NM}, an adaptive high-order unfitted finite element method based on $hp$ a posteriori error estimates is proposed for the elliptic interface problem based on novel $hp$ domain inverse estimates and the concept of the interface deviation to quantify the mesh resolution of the geometry. The method is defined on the induced mesh of an initial uniform Cartesian mesh so that each element in the induced mesh is a large element by using the idea of cell merging. A reliable algorithm to automatically generate the induced mesh is constructed in \cite{Chen2023JCP} for any $C^2$-smooth interfaces, which is based on the concept of admissible chain of interface elements, the classification of five patterns for merging elements, and appropriately ordering in generating macro-elements from the patterns. This algorithm constitutes an important building block of this paper. We refer to \cite{Melenk2001, Melenk2005, Ern2015} for the $hp$ a posteriori error analysis on conforming finite element meshes and the recent work on a posteriori error analysis for immersed finite element methods \cite{HeJSC2019} and the cut finite element method \cite{Burman2022}.

The main purpose of this paper is to develop a reliable algorithm to automatically generate the induced mesh for any piecewise smooth interfaces. We include a new type of proper intersection of an element with the interface, which is inevitable since the interface is locally like a curved sector around each singular point where the interface is not $C^2$ smooth. We design the singular pattern that merges the element including a singular point with surrounding elements such that the singular pattern is a large element and also its outlet elements have a special structure that allows us to use the reliable algorithm in \cite{Chen2023JCP} to merge the elements in the chain of interface elements connecting two singular patterns. The reliability of the algorithm to generate the induced mesh is also established.

The layout of the paper is as follows. In section 2 we extend the concept of the proper intersection and the large element for elements including singular points and derive an $hp$ a posteriori error estimate for a new unfitted finite element method whose finite element functions are conforming in each subdomain. In section 3 we develop the merging algorithm for piecewise $C^2$-smooth interfaces. In section 4 we report several numerical examples to illustrate the competitive performance of our adaptive method. We also include a numerical example to study the influence of the geometric modeling error by using the adaptive method developed in this paper, which confirms the theoretical result in the appendix of the paper in section 5.

\section{The unfitted finite element method}\label{sec_UFEM}

Let  $\Om\subset \mathbb{R}^2$ be a Lipschitz domain with a piecewise $C^2$-smooth boundary $\Sigma$. The domain $\Om$ is divided by a piecewise $C^2$-smooth interface $\Ga$ into two subdomains $\Om_1,\Om_2$. We assume $\Om_1\subset\subset\Om$ is a Lipschitz domain that is completely included in $\Om$. The intersection points of two pieces of $C^2$ curves of $\Ga$ or $\Sigma$ are called the singular points of the interface or boundary.

We consider the following elliptic interface problem
\begin{align}
&-{\rm div}(a\nabla u)=f\ \ \mbox{in }\Omega_1\cup\Omega_2,\label{m1}\\
&[\![u]\!]_{\Gamma}=0, \, [\![a\nabla u \cdot \mathbf{n}]\!]_{\Gamma}=0\ \ \mbox{on } \Gamma,\ \ u=g\ \ \mbox{on }\Sigma,\label{m2}
\end{align}
where $f\in L^2(\Omega)$, $g\in H^{1/2}(\Sigma)$, $\mathbf{n}$ is the unit outer normal to $\Omega_1$, and $[\![v]\!]:=v|_{\Omega_1}-v|_{\Omega_2}$ stands for the jump of a function $v$ across the interface $\Gamma$. We assume that the coefficient $a(x)$ is positive and piecewise constant, namely, $a=a_1\chi_{\Omega_1}+a_2\chi_{\Omega_2}$, $a_1,a_2 >0$, where $\chi_{\Omega_i}$ is the characteristic function of $\Omega_i$, $i=1,2$.

We assume the domain $\Om$ is covered by a Cartesian mesh $\cT$ with possible hanging nodes, which is obtained by local quad refinements of an initial Cartesian mesh $\cT_0$, see Fig.\ref{fig:2.1}. A Cartesian mesh means the elements of the mesh are rectangles whose sides are parallel to the coordinate axes. We assume each element in $\cT$ does not intersect both $\Ga$ and $\Sigma$. The elements intersecting with the interface or boundary are called the interface elements or boundary elements. We also assume that each element can contain at most one singular point of $\Ga$ or $\Sigma$.  An element that contains a singular point is called a singular element. We define the proper intersection of the interface or boundary with an element $K\in\cT$ as follows.

\begin{figure}[ht!]
\centering
\includegraphics[width=0.25\textwidth]{./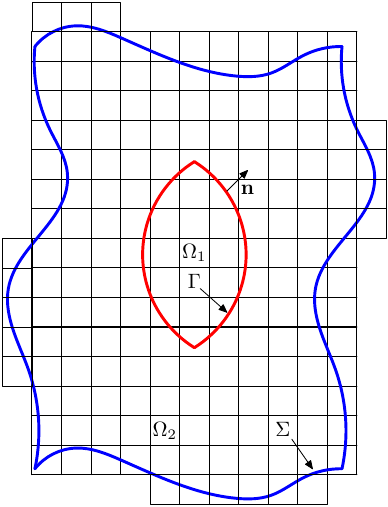}
\hspace{1.0cm}
\includegraphics[width=0.25\textwidth]{./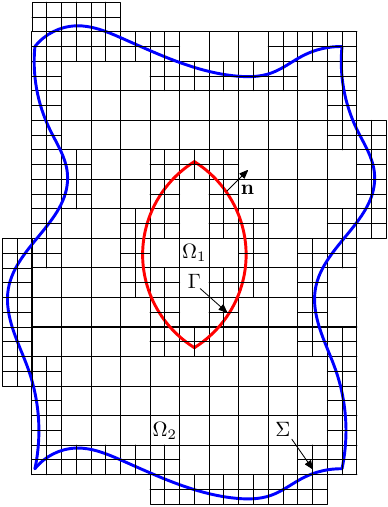}
\caption{Illustration of the domain $\Om$ and the Cartesian mesh $\calt_0$ (left) and $\cT$ (right).}\label{fig:2.1}
\end{figure}

\begin{Def}
\label{def:2.1}
(Proper intersection) The intersection of the interface $\Gamma$ or the boundary $\Sigma$ with an element $K$ is called the proper intersection if $\Gamma$ or $\Sigma$ intersects the boundary of $K$ at most twice at different sides, see Fig.\ref{fig:2.2}(a-b). If $K$ is a singular element, $\Ga$ or $\Sigma$ can also intersect one side of $K$ twice, see Fig.\ref{fig:2.2}(c).
\end{Def}

If $\Gamma$ or $\Sigma$ intersects an element at some vertex $A$, we regard $\Gamma$ as intersecting one of the two edges originated from $A$ at some point very close to $A$. If $\Gamma$ or $\Sigma$ is tangent to an edge of an element, we regard $\Gamma$ or $\Sigma$ as being very close to the edge but not intersecting with the edge.

We call an interface or boundary element $K$ a type $\cT_i$, $i=1,2,3$, element if the interface or boundary intersects the boundary of $K$ at the two neighboring, opposite sides, or the same side, see Fig.\ref{fig:2.2}. We remark that the inclusion of the type $\cT_3$ elements is important when the interface or boundary is piecewise smooth since the interface or boundary is close to a curved sector near the singular points. It is easy to see that when the mesh $\cT$ is locally refined near the interface and boundary, we can always assume that the intersection of $\Ga$ or $\Sigma$ with each element of $\cT$ is the proper intersection.

From $\calt$ we want to construct an induced mesh $\cM$, which avoids possible small intersection of the interface or boundary with the elements of the mesh. We first extend the definition of the large element in Chen et al. \cite[Definition 2.1]{Chen2021NM} to include type $\cT_3$ interface or boundary elements. We denote $\cT^\Ga:=\{K\in\cT:K\cap\Ga\not=\emptyset\}$, $\cT^\Sigma=\{K\in\cT:K\cap\Sigma\not=\emptyset\}$.

\begin{figure}
\centering
\includegraphics[width=0.7\textwidth]{./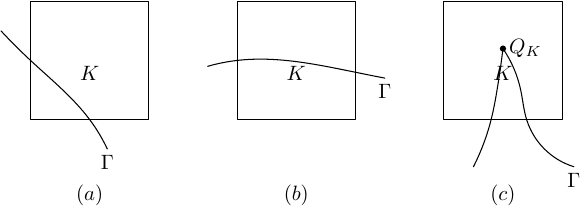}
\caption{Different types of interface elements. From left to right, a type $\cT_1,\cT_2,\cT_3$ element.}\label{fig:2.2}
\end{figure}

\begin{Def}\label{def:2.2}
(Large element) An element $K\in\calt$ is called a large element if $K\subset \Om_i, i=1,2,$ or $K\in \calt^\Gamma\cup\cT^\Sigma$ for which the intersection of $\Gamma$ or $\Sigma$ with $K$ is a proper intersection and there exists a fixed constant $\delta_0\in(0,\frac 12)$ such that $\de_K,\widetilde\de_K\ge\de_0$, where $\de_K=\de_K(\cT)$ is the element geometric index and $\widetilde\de_K=\widetilde\de_K(\cT)$ is the element singular index when $K$ includes a singular point $Q_K$, which are defined by
\ben
\de_K=\min_{\stackrel{i=1,2}{e\in\cE^{\rm side}_K,\,e\cap\Om_i\not=\emptyset}}\frac{|e\cap\Om_i|}{|e|},\ \
\widetilde\de_K=\min_{e\in\cE_K^{\rm side}}\frac{{\rm dist}(Q_K,e)}{(|e^\perp|/2)}.
\een
Here $\cE_K^{\rm side}$ is the set of sides of $K$, ${\rm dist}(Q_K,e)$ is the distance of $Q_K$ to $e\in\cE_K^{\rm side}$, and $e^\perp$ is either one of two sides of $K$ perpendicular to $e\in\cE_K^{\rm side}$.
\end{Def}

The large elements including a singular point of $\Gamma$ or $\Sigma$ will be called {\em singular} large elements. The other kinds of large elements will be called {\em regular} large elements. If the element $K\in \calt^\Gamma\cup\cT^\Sigma $ is not a large element, we make the following assumption as in \cite{Chen2023JCP}, which is inspired by Johansson and Larson \cite{Johansson}.

\medskip
\noindent {\bf Assumption (H1)}: For each $K\in\cT^\Gamma\cup\cT^\Sigma$, there exists a rectangular macro-element $M(K)$ that is a union of $K$ and its surrounding element (or elements) such that $M(K)$ is a large element. We assume $h_{M(K)}\le C_0h_K$ for some fixed constant {{$C_0>0$}}.
\medskip

This assumption can always be satisfied by using the idea of cell merging. In Chen and Liu \cite{Chen2023JCP}, a reliable algorithm to satisfy this assumption is constructed when the interface is $C^2$ smooth. For the piecewise smooth interfaces or boundaries, we will construct the merging algorithm in section \ref{sec_merging_alg}.

In the following, we will always set $M(K)=K$ if $K\in \cT^\Gamma\cup\cT^\Sigma$ is a large element. Then, the induced mesh of $\cT$ is defined as
\ben
\cM=\{M(K):K\in \mathcal{T}^\Gamma\cup\cT^\Sigma\}\cup\{K\in\mathcal{T}: K\not \subset M(K') \text{ for some } K'\in \mathcal{T}^\Gamma\cup\cT^\Sigma\}.
\een
We will write $\cM={\rm Induced}(\cT)$. Note that $\mathcal{M}$ is also a Cartesian mesh in the sense that either $M(K)\cap M(K')=\emptyset$ or $M(K)=M(K')$ for any two different elements $K,K'\in\mathcal{T}$. All elements in $\mathcal{M}$ are large elements.

For any $K\in \cM$, let $h_K$ stand for the diameter of $K$. For $K\in\mathcal{M}^\Gamma:=\{K\in\mathcal{M}:K\cap\Gamma\not=\emptyset\}$, denote $K_i=K\cap\Om_i$, $i=1,2$, $\Ga_K=\Ga\cap K$, and let $\Ga$ intersect the sides of $K$ at $A_K,B_K$. If $\Ga_K$ is smooth, we set $\Ga_K^h=A_KB_K$,
the open line segment connecting $A_K,B_K$. If $K$ includes a singular point $Q_K$, then $\Gamma_K$ is the union of two $C^2$-smooth curves $\Gamma_{1K}\cup \Gamma_{2K}$ where the end points of $\Ga_{1K}$ and $\Ga_{2K}$ are $A_K,Q_K$ and $B_K,Q_K$, respectively. We denote $\Ga_{1K}^h=A_KQ_K, \Ga^h_{2K}=Q_KB_K$, and $\Ga^h_K=\Ga^h_{1K}\cup Q_K\cup\Ga^h_{2K}$. In either case, $\Ga^h_K$ divides $K$ into two polygons $K_1^h, K_2^h$. As the consequence of that $K$ is a large element, we have the following lemma, which can be easily proved and we omit the details.

\begin{figure}[ht!]
\centering
\includegraphics[width=0.8\textwidth]{./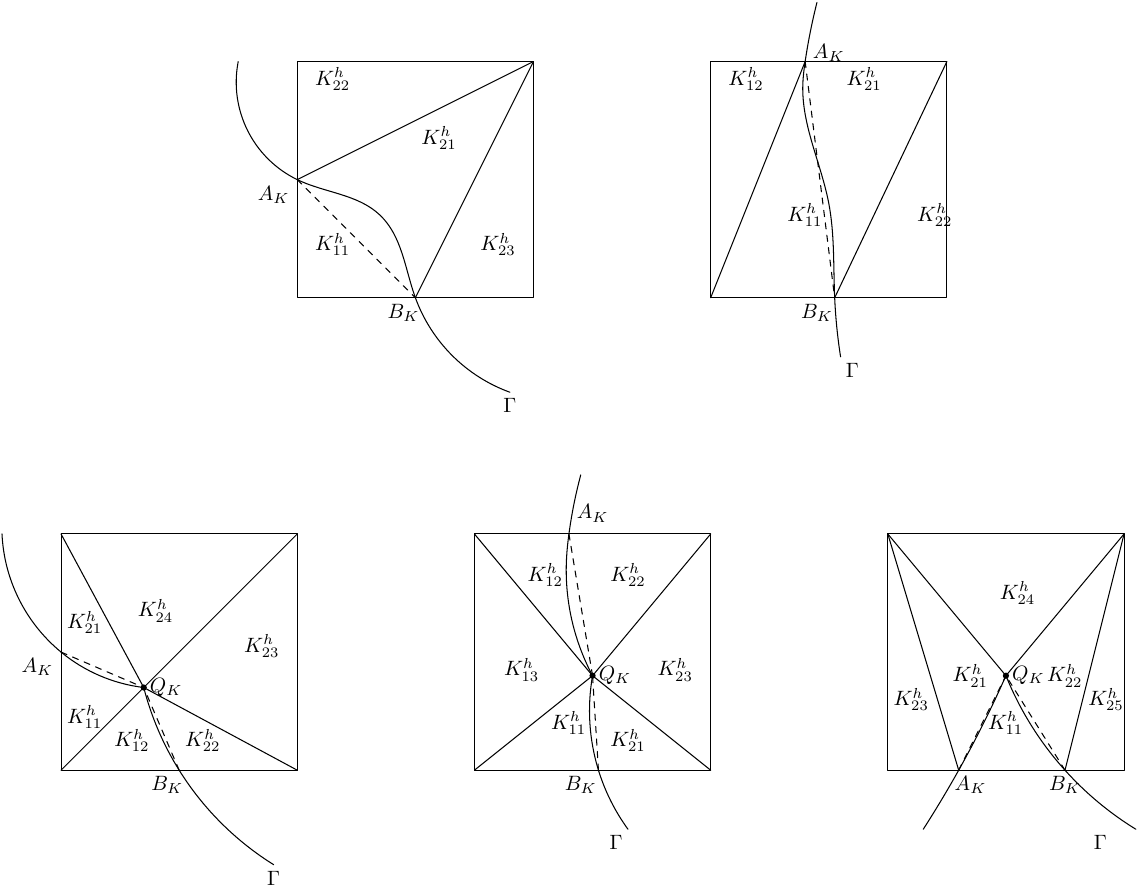}
\caption{Illustration of $K_{ij}^h$ of the {\em regular} interface large elements (top) or {\em singular} interface large elements. $\Gamma_{iK}^h$ are denoted by the dashed line, $i=1,2,j=1,\cdots,J_i^K$. } \label{fig:2.3}
\end{figure}

\begin{lem}\label{lem:2.1}
Let $K\in\cM^\Ga$. Then for $i=1,2$, $K^h_i$ is the union of triangles $K^h_{ij}$, $j=1,\cdots, J_i^K$, $1\le J_i^K\le 5$, such that if $K$ is a regular large element, $K_{ij}^h$ has one vertex at $A_K^i$ that is included in $\Om_i$ and has the maximum distance to $\Ga_K^h$, and if $K$ is a singular large element, $K_{ij}^h$ has one vertex at $Q_K$. The other two vertices of $K_{ij}^h$, $j=1,\cdots,J_i^K$, $i=1,2$, are $A_K,B_K$ or the vertices of $K$ in $\Om_i$, see Fig.\ref{fig:2.3}. Moreover, $K_{ij}^h$, $j=1,\cdots,J_i^K$, $i=1,2$, are shape regular in the sense that the radius of the maximal inscribed circle of $K_{ij}^h$ is bounded below by $c_0h_K$ for some constant $c_0>0$ depending only on $\de_0$ in Definition \ref{def:2.2}.
\end{lem}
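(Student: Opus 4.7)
The plan is to proceed by an exhaustive case analysis on the type ($\cT_1,\cT_2,\cT_3$) of the interface element $K$, combined with whether $K$ is regular or singular, and in each case to exhibit the triangulation $\{K_{ij}^h\}_{j=1}^{J_i^K}$ explicitly, deferring shape regularity to a uniform argument at the end.

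First, for a regular large element, $\Gamma_K^h$ is the single chord $A_KB_K$ and $K$ is of type $\cT_1$ or $\cT_2$. I would pick $A_K^i$ to be the vertex of $K$ that lies in $\Om_i$ and maximises the distance to $A_KB_K$; by the geometric index bound $\de_K\ge\de_0$ and the fact that $K$ is a rectangle of bounded aspect ratio, this distance is $\gtrsim \de_0 h_K$. The polygon $K_i^h$ has at most five vertices among $\{A_K,B_K\}$ and the corners of $K$ sitting in $\overline{\Om_i}$, and I triangulate by drawing all diagonals from $A_K^i$. This gives $J_i^K\le 3$ triangles, each with apex $A_K^i$ and base either a side of $K$ or a subsegment of a side of length $\gtrsim \de_0 h_K$ (again by $\de_K\ge\de_0$), or the chord $A_KB_K$ itself.

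Second, for a singular large element, $\Gamma_K^h=\Gamma_{1K}^h\cup Q_K\cup\Gamma_{2K}^h$ is a broken chord pinned at the interior point $Q_K$. I would triangulate each $K_i^h$ by fanning from $Q_K$ to the other vertices of $K_i^h$, which now belong to $\{A_K,B_K\}$ together with the corners of $K$ lying in $\overline{\Om_i}$. The singular index bound $\widetilde\de_K\ge\de_0$ guarantees that $Q_K$ sits at distance $\ge \de_0|e^\perp|/2\gtrsim \de_0 h_K$ from every side of $K$, so the chords $Q_KA_K$ and $Q_KB_K$ stay in the interior of $K$ and the fan is non-degenerate. The worst case is type $\cT_3$: both $A_K$ and $B_K$ lie on one side of $K$, one subdomain is the triangle $A_KB_KQ_K$ itself ($J_i^K=1$), while the other is a hexagon (four corners of $K$ plus $A_K,B_K$) which splits into $J_i^K\le 5$ triangles; the types $\cT_1,\cT_2$ give strictly fewer pieces.

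Third, the shape regularity of every $K_{ij}^h$ follows from a single uniform estimate: each triangle has an apex (either $A_K^i$ or $Q_K$) whose distance to the opposite base is $\gtrsim \de_0 h_K$, and a base of length $\gtrsim \de_0 h_K$ (a full side of $K$, a subsegment of length $\de_K|e|\ge \de_0 h_K$, or one of the chords $A_KB_K,\,A_KQ_K,\,Q_KB_K$, the latter two being bounded below by $\widetilde\de_K|e^\perp|/2$). Together with the bounded aspect ratio of the Cartesian rectangle $K$, this yields an inscribed circle of radius $c_0 h_K$ with $c_0$ depending only on $\de_0$.

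The main obstacle I anticipate is not the inequality chasing for shape regularity, which is mechanical once the apex-base picture is fixed, but the combinatorics of the case split for singular type $\cT_3$ elements: one must confirm, by inspecting how $\Gamma_K$ enters and exits the same side of $K$ and how the pair of smooth pieces $\Gamma_{1K},\Gamma_{2K}$ lie relative to $Q_K$, that the fan from $Q_K$ really produces a valid triangulation of both $K_1^h$ and $K_2^h$ and that the vertex counts never exceed six on either side. Once this geometric bookkeeping is settled, the three steps above close the proof.
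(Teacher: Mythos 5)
The paper itself gives no proof of Lemma \ref{lem:2.1} (it is stated as ``can be easily proved and we omit the details''), so there is nothing to compare against line by line; your construction is, however, clearly the intended one, as it reproduces exactly the decomposition shown in Fig.\ref{fig:2.3}: a fan from the farthest vertex $A_K^i$ in the regular case, a fan from $Q_K$ in the singular case. Your bookkeeping is essentially right: $K_i^h$ is convex in the regular case and star-shaped with respect to $Q_K$ in the singular case (segments from $Q_K$ to $\pa K$ stay in $K$ by convexity and cannot cross the two chords emanating from $Q_K$), the worst vertex count is the singular type $\cT_3$ configuration with $J_1^K=1$ and $J_2^K=5$, and the inradius bound follows from ${\rm area}\ge \frac12(\mbox{base})(\mbox{height})$ together with the fact that all triangles lie in $K$, so their perimeter is at most $3h_K$; the loss of a factor $\de_0$ (your heights are really of order $\de_0^2 h_K$ after the aspect-ratio correction) is harmless since $c_0$ only needs to depend on $\de_0$.

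The one step you assert too quickly is the claim that every base which is a proper subsegment of a side has length $\ge\de_K|e|\ge\de_0 h_K$ ``by $\de_K\ge\de_0$''. This is immediate only for sides met by $\Ga$ at a single point, since then the two pieces are exactly $e\cap\Om_1$ and $e\cap\Om_2$ and Definition \ref{def:2.2} applies to each. On the doubly-cut side of a type $\cT_3$ singular element, however, $e\cap\Om_i$ for the outer subdomain is the \emph{union} of the two pieces between $A_K$ (resp. $B_K$) and the adjacent corners, and $\de_K$ only bounds their total length; nothing in Definition \ref{def:2.2} (nor in $\widetilde\de_K$, which only controls the position of $Q_K$) prevents, say, $A_K$ from lying arbitrarily close to a corner while $\de_K,\widetilde\de_K\ge\de_0$. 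In that configuration the fan triangle with base the tiny corner-to-$A_K$ piece is thin and its inscribed radius is not bounded below by $c_0h_K$, so the inequality chasing you call ``mechanical'' genuinely breaks at this point. To close the gap you must either read the geometric index componentwise (i.e. require each connected component of $e\cap\Om_i$ to have length $\ge\de_0|e|$, which is how the bound is used) or argue from the way $A_K,B_K$ are produced by the singular-pattern construction of Section \ref{sec_merging_alg} that the intersection points stay uniformly away from the corners; as written, the uniform base-length estimate in your third step does not follow from the stated hypotheses in the type $\cT_3$ case.
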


If $K_{ij}^h$ has one side as $\Ga_K^h$ or $\Ga_{jK}^h$, $j=1,2$, we define ${\widetilde K}^h_{ij}$ as the curved triangle bounded by the other two sides of $K_{ij}^h$ and one curved side $\Ga_K$ or $\Ga_{jK}^h$, $j=1,2$. If $K_{ij}^h$ does not have a side as $\Ga_K^h$ or $\Ga_{jK}^h$, $j=1,2$, we define ${\widetilde K}^h_{ij}=K_{ij}^h\cap\bar{K}_i$. Then we have
\ben
K=K_1\cup\Ga_K\cup K_2,\ \ K_i=\mbox{\rm the interior of }\overline{\cup^{J_i^K}_{j=1}{\widetilde K}_{ij}^h},\ \ i=1,2.
\een
For $K\in\cM^\Ga$, we set $\cM(K)=\{K_{ij}^h:i=1,2,j=1,\cdots,J_i^K\}, \widetilde{\cM}(K)=\{\widetilde{K}_{ij}^h:i=1,2,j=1,\cdots,J_i^K\}$, and $\sigma_K:\cM(K)\to\widetilde\cM(K)$ the mapping $\sigma_K(K_{ij}^h)=\widetilde{K}_{ij}^h$, $j=1,\cdots,J_i^K$, $i=1,2$,

Similarly, for $K\in\cM^\Sigma:=\{K\in\cM:K\cap\Sigma\not=\emptyset\}$, we denote $\Sigma_K=\Sigma\cap K$ and let $\Sigma$ intersect $K$ at two points $A_K,B_K$. If $\Sigma$ is smooth in $K$, set $\Sigma_K^h=A_KB_K$, and if $K$ is a singular boundary element, denote $\Sigma^h_{1K}=A_KQ_K$, $\Sigma_{2K}^h=B_KQ_K$, and $\Sigma_K^h=\Sigma_{1K}^h\cup Q_K\cup\Sigma_{2K}^h$. We let $\Sigma_{1K}$ the curve of $\Sigma$ with the end points $A_K,Q_K$ and $\Sigma_{2K}$ the curve of $\Sigma$ with the end points $B_K,Q_K$ so that $\Sigma_K=\Sigma_{1K}\cup Q_K\cup\Sigma_{2K}$. $K^h$ is the polygon bounded by $\Sigma^h_K$ and $\pa K\cap\Om$. The proof of the following lemma is also omitted.

\begin{lem}\label{lem:2.2}
Let $K\in\cM^\Sigma$. Then $K^h$ is the union of triangles $K^h_{j}$, $j=1,\cdots, J^K$, $1\le J^K\le 5$, such that if $K$ is a regular large element, $K_{j}^h$ has one vertex at $A_K$ in $\Om$ that has the maximum distance to $\Sigma_K^h$, and if $K$ is a singular large element, $K_{j}^h$ has one vertex at $Q_K$. The other two vertices of $K_{j}^h$, $j=1,\cdots,J_K$, are $A_K,B_K$ or the vertices of $K$ in $\Om$. Moreover, $K_{j}^h$, $j=1,\cdots,J^K$, are shape regular in the sense that the radius of the maximal inscribed circle of $K_{j}^h$ is bounded below by $c_0h_K$ for some constant $c_0>0$ depending only on $\de_0$ in Definition \ref{def:2.2}.
\end{lem}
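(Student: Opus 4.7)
The plan is to mirror the argument for Lemma \ref{lem:2.1}, exploiting the fact that a boundary element is geometrically simpler than an interface element because only one subdomain ($\Omega$ itself) participates. I would first use the proper intersection assumption (Definition \ref{def:2.1}) to describe the possible shapes of $K^h$ explicitly, and then triangulate from a distinguished apex before verifying the inradius lower bound.

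\textbf{Step 1: Enumerate the shape of $K^h$.} By proper intersection, if $K$ is regular then $\Sigma_K^h = A_KB_K$ is a single chord meeting $\partial K$ on two distinct sides (type $\cT_1$ or $\cT_2$), while if $K$ is singular then $\Sigma_K^h = A_KQ_K \cup Q_KB_K$ is a two-segment polyline, with the two chord endpoints on neighboring/opposite sides (type $\cT_1$ or $\cT_2$) or on the same side (type $\cT_3$). The vertex set of $K^h$ is therefore contained in $\{A_K,B_K,Q_K\}\cup\{c:c\text{ corner of }K,\ c\in\Om\}$. Counting corners of $K$ in $\Om$ separately in each type gives at most $3$ (regular $\cT_1$), $2$ (regular $\cT_2$), $4$ (singular $\cT_1$), $3$ (singular $\cT_2$), and $4$ (singular $\cT_3$), so $K^h$ has at most $7$ vertices; the worst case is the singular $\cT_3$ configuration.

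\textbf{Step 2: Triangulate from the apex.} In the regular case pick the apex $A_K$ to be the corner of $K$ inside $\Om$ whose distance to the chord $\Sigma_K^h$ is maximal; in the singular case pick the apex $Q_K$. Connect the apex by straight segments to every other vertex of $K^h$. Because $A_K$ (respectively $Q_K$) sees all remaining vertices in a star-shaped fashion inside $K^h$ — an elementary consequence of the proper-intersection shape catalogue in Step 1 — this is a genuine triangulation of $K^h$ into $J^K\le 7-2=5$ triangles whose vertices are exactly as claimed.

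\textbf{Step 3: Shape regularity from $\de_K,\widetilde\de_K\ge\de_0$.} For each triangle $K_j^h$, take its ``base'' to be the side opposite the apex. Since $\de_K\ge\de_0$, every portion of a side of $K$ used as a base has length $\ge\de_0 h_K$, and chords $A_KB_K$, $A_KQ_K$, $Q_KB_K$ are likewise of length $\ge c(\de_0)h_K$ by an elementary argument combining the lower bound $\de_0|e|$ on the two pieces of each split side with the bound $\widetilde\de_K\ge\de_0$ on the distance from $Q_K$ to every side. For the height I bound below the distance from the apex to the supporting line of the base: in the regular case this uses the maximum-distance choice of $A_K$ together with the two-sided bound $|e\cap\Om|/|e|\ge\de_0$; in the singular case it uses directly ${\rm dist}(Q_K,e)\ge\tfrac12\de_0|e^\perp|$ for every side $e$ of $K$. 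Since base and height are both $\gtrsim h_K$ while the perimeter is $\le 3h_K$, the formula $r = 2\,\mathrm{area}/\mathrm{perimeter}$ yields an inradius $\ge c_0 h_K$ with $c_0=c_0(\de_0)$.

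The main obstacle is the singular $\cT_3$ configuration, in which two triangles share the vertex $Q_K$ on one side of $K$ and three other triangles lie on the opposite side of $\Sigma_K^h$; there one must check that the angle at $Q_K$ does not degenerate, which follows from the bound ${\rm dist}(Q_K,e)\ge\tfrac12\de_0|e^\perp|$ applied to the side of $K$ containing $A_K$ and $B_K$, forcing $Q_K$ to sit a definite distance $\gtrsim\de_0 h_K$ above that side while $A_K,B_K$ are a definite distance $\gtrsim\de_0 h_K$ from its endpoints.
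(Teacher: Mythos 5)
The paper gives no proof of this lemma (it is stated, like Lemma \ref{lem:2.1}, as easily proved and omitted), and your argument -- fan triangulation of the polygon $K^h$ from the distinguished apex ($A_K$ chosen as the farthest corner in $\Om$ in the regular case, $Q_K$ in the singular case), star-shapedness of $K^h$ with respect to that apex, the vertex count giving $J^K\le 5$, and the inradius bound via $r=2\,\mathrm{area}/\mathrm{perimeter}$ with base and height bounded below by $c(\de_0)h_K$ -- is exactly the intended argument, so the proposal is correct and follows the paper's (implicit) approach. Two small points you should make explicit: for bases lying on a side of $K$ the relevant height is the distance from the apex to that side, which is controlled by the maximum-distance choice of $A_K$ (regular case) or by $\widetilde\de_K\ge\de_0$ (singular case); and on a type $\cT_3$ side containing both $A_K$ and $B_K$ the ratio $|e\cap\Om_i|/|e|$ only bounds the \emph{sum} of the two outer sub-segments, so the claimed lower bound $\ge c(\de_0)h_K$ for each such base requires reading the large-element condition componentwise, which is what the singular-pattern construction of Section \ref{sec_merging_alg} actually guarantees.
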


Again if $K_{j}^h$ has one side as $\Sigma_K^h$ or $\Sigma_{jK}^h$, $j=1,2$, we define ${\widetilde K}^h_{j}$ as the curved triangle bounded by the other two sides of $K_{j}^h$ and one curved side $\Sigma_K$ or $\Sigma_{jK}^h$, $j=1,2$. If $K_{j}^h$ does not have a side as $\Sigma_K^h$ or $\Sigma_{jK}^h$, $j=1,2$, we define ${\widetilde K}^h_{j}=K_{j}^h\cap\bar\Om$. Then we have
\ben
K\cap\Om=\mbox{\rm the interior of }\overline{\cup^{J^K}_{j=1}{\widetilde K}_{j}^h}.
\een
For $K\in\cT^\Sigma$, we denote $\cM(K)=\{K_j^h:j=1,\cdots,J^K\}, \widetilde{\cM}(K)=\{\widetilde{K}^h_j:j=1,\cdots,J^K\}$, and $\sigma_K:\cM(K)\to\widetilde{\cM}(K)$ the mapping $\sigma_K(K_j^h)=\widetilde{K}^h_j$, $j=1,\cdots, J^K$.

For $i=1,2$, let $\cM_i$ be the set of elements $K\in\cM$ included in $\Om_i$, the triangles $T\in{\cT}(K)$, $T\subset K_i^h$ for all elements $K\in\cM^\Ga$, and the triangles $T\subset K^h$ for all elements $K\in\cM^\Sigma$. Then $\cM_i$ is a shape regular body-fitted mixed rectangular and triangular mesh of $\Om_i$, and $\cM_1\cup\cM_2$ is a shape regular body-fitted mixed rectangular and triangular mesh of $\Om$.

The concept of interface deviation introduced in \cite{Chen2021NM} plays an important role in our subsequent analysis. Roughly speaking, the interface deviation measures how far $\Gamma_K$ deviates from $\Gamma_K^h$ or $\Gamma_{jK}$ from $\Gamma_{jK}^h$, $j=1,2$. We now extend this concept to include also the type $\cT_3$ interface and boundary elements.

\begin{Def}\label{def:2.3}
For any $K\in \cM^\Ga\cup\cM^\Sigma$, the interface or boundary deviation $\eta_K$ is defined as
\ben
\eta_K=\max_{\widetilde{T}\in\widetilde{\cM}(K)^*}\frac{{\rm dist}_{\rm H}(E_{T}^h,E_{T})}{{\rm dist}(A_{T},E_{T}^h)},
\een
where $\widetilde{\cM}(K)^*$ is the set of elements $\widetilde{T}\in\widetilde{\cM}(K)$ having one curved side $E_T$, $E_T^h$ is the straight segment connecting the end points of $E_T$, and $A_T$ is the vertex of $T$ opposite to $E_T$.
Here ${\rm dist}_{\rm H}(\Gamma_1,\Gamma_2)=\max_{x\in \Gamma_1}(\min_{y\in \Gamma_2}|x-y|)$and ${\rm dist}(A,\Gamma_1)$ is the distance of a point $A$ to the set $\Gamma_1$.
\end{Def}

It is known \cite{Chen2021NM} that $\eta_K\le Ch_K$. The following assumption, which can be viewed as a variant of the conditions of the mesh resolving the geometry of the interface or boundary, can be easily satisfied if the mesh is locally refined near the interface or boundary.

\medskip
\noindent {\bf Assumption (H2)}: For any $K\in \cM^\Ga\cup\cM^\Sigma$, $\eta_K \leq \eta_0$ for some $\eta_0\in (0,1/2)$.
\medskip

The study in \cite{Chen2023JCP} suggests that one should choose $\eta_0\le 0.1/[p(p+1)]$ in practice to control the condition number of the stiffness matrix of the unfitted finite element method.

Since the mesh $\cT$ may have hanging nodes, we introduce an important concept of $K$-mesh introduced in Babu\v{s}ka and Miller \cite{Babu87} for the Cartesian mesh $\cT$. Let $\mathcal{N}^0$ be the set of conforming nodes of $\cT$, which are the vertices of the elements either located on the boundary $\Sigma$ or shared by the four elements to which they belong. For each conforming node $P$, we denote by $\psi_P\in H^1(\Om^h)$, where $\Om^h=(\cup_{K\in\cT}\bar K)^\circ$, the element-wise bilinear function that satisfies $\psi_P(Q)=\delta_{PQ}\ \forall Q\in\mathcal{N}^0$. Here $\de_{PQ}$ is the Kronecker delta. We impose the following $K$-mesh condition on the mesh $\cT$ and refer to Bonito and Nochetto \cite[\S 6]{Bonito} for a refinement algorithm to enforce the assumption.

\medskip\noindent
{\bf{Assumption (H3)}}: There exists a constant $C>0$ independent of $h_K$ for all $K\in\cT$ such that for any conforming node $P\in\mathcal{N}^0$, ${\rm diam}(\supp(\psi_P))\le C\min_{K\in\cT_P}h_K$, where $\cT_P=\{K\in\cT:K\subset\supp(\psi_P)\}$.
\medskip

For any $p\ge 1$ and any Lipschitz domain $D\subset\R^d$, $d\ge 1$, we denote by $P_p(D)$ the space of polynomials of degree at most $p$ in $D$ and $Q_{p}(D)$ the space of polynomials of degree at most $p$ for each variable in $D$. The unfitted finite element space in Hansbo and Hansbo \cite{Hansbo} and also used in \cite{Chen2021NM}, \cite{Chen2023JCP} is
\ben
\mathbb{H}_p(\cM)=\{v\in L^2(\Om):v=v_1\chi_{\Om_1}+v_2\chi_{\Om_2},v_1|_K,v_2|_K\in Q_p(K), K\in\cM\}.
\een
The finite element functions in this space in general cannot be conforming in each subdomain $\Om_1,\Om_2$. In Chen et al. \cite[Lemma 2.3]{Chen2023waveMC}, it is shown that the mesh $\cM$ constructed by \cite[Algorithm 6]{Chen2023JCP} always satisfies the following compatibility assumption in each subdomain. This allows us to propose a new unfitted finite element space that is conforming in each domain $\Om_1,\Om_2$.

\medskip
\noindent{\bf Assumption (H4):} If $e=\pa G\cap\pa G'$, $G,G'\in\cM_i$, and $F,F'$ be respectively the side of $G,G'$ including $e$, then either $F\subset F'$ or $F'\subset F$.
\medskip

We introduce the following finite element spaces on the interface or boundary elements
\ben
& &X_p(K)=\{v\in H^1(K_1\cup K_2):v|_{\widetilde T}\in P_p(\widetilde{T}), \widetilde{T}\in\widetilde\cM(K)\}\ \ \forall K\in\cM^\Gamma,\\
& &X_p(K)=\{v\in H^1(K\cap\Om):v|_{\widetilde T}\in P_p(\widetilde{T}),\widetilde{T}\in\widetilde\cM(K)\}\ \ \forall K\in\cM^\Sigma.
\een
We also introduce the finite element spaces
\ben
& &U_p(K)=\{v\in H^1(K_1^h\cup K_2^h):v|_T\in P_p(T), T\in\cM(K)\}\ \ \forall K\in\cM^\Ga,\\
& &U_p(K)=\{v\in H^1(K^h):v|_T\in P_p(T),T\in\cM(K)\}\ \ \forall K\in\cM^\Sigma.
\een
The degrees of freedom of the finite elements functions in $X_p(K)$, $K\in\cM^\Ga\cup\cM^\Sigma$, can be defined by using the degrees of freedom of the finite elements functions in $U_p(K)$ via the following natural extension $\mathbb{E}_K:U_p(K)\to X_p(K)$ such that for any $\phi\in U_p(K)$,
\beq\label{z1}
\mathbb{E}_K(\phi)|_{T\cap\widetilde{T}}=\phi|_{T\cap\widetilde{T}}\ \ \forall T\in\cM(K),\ \ \widetilde{T}=\sigma_K(T)\in\widetilde{\cM}(K).
\eeq

Thanks to the compatibility assumption of the mesh $\cM_i$ in the subdomain $\Om_i$, $i=1,2$,
we can define the following unfitted finite element space, which is conforming in
each subdomain $\Om_1,\Om_2$,
\ben
& &\bbX_{p}(\mathcal{M}):=\{v\in H^1(\Om_1\cup\Om_2):  v|_K\in X_{p}(K)\ \ \forall K \in \cM^\Ga\cup\cM^\Sigma,\\
& &\hskip5.2cm v|_K\in Q_{p}(K)\ \ \forall K \in \cM, K\subset\Om_i, i=1,2\}.
\een

In \cite[Lemma 2.4]{Chen2021NM}, an $hp$ domain inverse estimate is proved for unfitted finite element functions in $\mathbb{H}_p(\cM)$. The following domain
inverse estimates for $\mathbb{X}_p(\cM)$ finite element functions are shown in \cite[Lemma 2.5]{Chen2023waveMC} by modifying the argument in \cite[Lemma 2.4]{Chen2021NM}.

\begin{lem}\label{lem:2.3}
Let $K \in \cM^\Ga\cup\cM^\Sigma$. Then there exists a constant $C$ independent of $p$, $h_K$ and $\eta_K$ for all $K\in\cM$ such that for $i=1,2$,
\ben
\|\na v \|_{L^2(K_i)}\leq C p^2 h_K^{-1}\Theta_K^{1/2}\|v\|_{L^2(K_i)}, \quad \|v\|_{L^2(\pa K_i)}\leq C p h_K^{-1/2}\Theta_{K}^{1/2}\|v\|_{L^2(K_i)} \quad \forall v \in X_p(K),
\een
where
\ben
\Theta_K=T\left(\frac{1+3\eta_{K}}{1-\eta_K}\right)^{2p+3}\ \ \forall K\in\cM^\Ga\cup\cM^\Sigma.
\een
Here $\mathsf{T}(t)=t+\sqrt{t^2-1}\ \ \forall t\ge 1$.
\end{lem}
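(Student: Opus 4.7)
The plan is to localise the two estimates to each curved sub-triangle $\widetilde T\in\widetilde\cM(K)$, transfer them to the corresponding straight triangle $T=\sigma_K^{-1}(\widetilde T)\in\cM(K)$ by a polynomial norm-comparison inequality whose constant is $\Theta_K^{1/2}$, and then apply the standard shape-regular $hp$-inverse estimate on the straight triangle. This follows the route of \cite[Lemma 2.4]{Chen2021NM} and \cite[Lemma 2.5]{Chen2023waveMC}; the novelty here is confirming that the geometric inputs needed by that route survive when $K$ is a type $\cT_3$ element or a singular large element.

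First I would split $\|\nabla v\|_{L^2(K_i)}^2=\sum_{\widetilde T\subset K_i}\|\nabla v\|_{L^2(\widetilde T)}^2$ (and similarly decompose $\|v\|_{L^2(\pa K_i)}^2$ across the straight and curved sides of the triangles in $\widetilde\cM(K)$ that make up $K_i$, with the obvious modification when $K\in\cM^\Sigma$). Since $v|_T\in P_p(T)$ via the natural extension \eqref{z1} and, by Lemma \ref{lem:2.1} or Lemma \ref{lem:2.2}, each $T$ is shape regular with inscribed radius at least $c_0 h_K$, the classical polynomial inverse inequalities on $T$ give
\ben
\|\nabla v\|_{L^2(T)}\le Cp^2 h_K^{-1}\|v\|_{L^2(T)},\qquad \|v\|_{L^2(\pa T)}\le Cp h_K^{-1/2}\|v\|_{L^2(T)}.
\een
The task is then reduced to the two-sided polynomial transfer $\|v\|_{L^2(T)}\le C\Theta_K^{1/2}\|v\|_{L^2(\widetilde T)}$, together with analogous comparisons between $\|\nabla v\|_{L^2(\widetilde T)}$ and $\|\nabla v\|_{L^2(T)}$ and between the trace norms on $\pa T$ and on the curved side $E_T$.

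To obtain the transfer I would introduce a radial coordinate frame issued from the distinguished vertex $A_T$ of $T$ (respectively $Q_K$ when $K$ is a singular large element), with $E_T^h$ as the far base of $T$. By Definition \ref{def:2.3}, ${\rm dist}_{\rm H}(E_T^h,E_T)\le \eta_K\,{\rm dist}(A_T,E_T^h)$, so in this frame both $T$ and $\widetilde T$ lie in the thin band between the dilations of $E_T^h$ by factors $(1-\eta_K)$ and $(1+\eta_K)$ about the apex. Along the radial variable $v$ is a polynomial of degree $p$, and a one-dimensional Chebyshev--Markov bound on the interval $[1-\eta_K,1+\eta_K]$, scaled so that $\widetilde T$ plays the role of the reference region, contributes precisely a factor $\mathsf{T}\!\bigl((1+3\eta_K)/(1-\eta_K)\bigr)^{p+O(1)}$; integrating in the transverse direction and adding the boundary contribution yields the announced $\Theta_K^{1/2}$.

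The main obstacle is verifying that this Chebyshev-type transfer survives in the two new configurations: (i) the type $\cT_3$ elements, where $\Ga$ or $\Sigma$ enters and leaves through the same side of $K$, and (ii) singular large elements, in which the apex of the radial frame has to be the singular point $Q_K$ rather than the vertex of $T$ opposite the base. Lemmas \ref{lem:2.1} and \ref{lem:2.2} have been formulated precisely so that in both cases every piece $K_{ij}^h$ (respectively $K_j^h$) has $Q_K$ (or $A_K$) as a common vertex and still satisfies the inscribed-radius lower bound $c_0 h_K$, while Definition \ref{def:2.3} provides a uniform control of the deviation $\eta_K$ of every curved side of $\widetilde\cM(K)^*$. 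Once these geometric prerequisites are in place, the argument of \cite[Lemma 2.4]{Chen2021NM} and \cite[Lemma 2.5]{Chen2023waveMC} transfers essentially verbatim and delivers the stated bounds.
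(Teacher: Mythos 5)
A preliminary remark on the comparison: the paper gives no proof of Lemma \ref{lem:2.3} at all --- it is quoted from \cite[Lemma 2.5]{Chen2023waveMC}, which modifies \cite[Lemma 2.4]{Chen2021NM} --- so your proposal can only be judged against that cited argument, and in outline it reconstructs it correctly: decompose $K_i$ into the curved triangles $\widetilde T\in\widetilde\cM(K)$, use that $v\in X_p(K)$ is a genuine degree-$p$ polynomial on each piece, and pay the factor $\Theta_K^{1/2}$ through a one-dimensional Chebyshev comparison along rays from the apex; your identification of $(1+3\eta_K)/(1-\eta_K)$ as the image of the far endpoint under the affine map normalizing the shorter radial segment is exactly where that quantity comes from, and the observation that Lemmas \ref{lem:2.1}--\ref{lem:2.2} and Definition \ref{def:2.3} supply the geometric prerequisites in the type $\cT_3$ and singular configurations is the right thing to check.

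Two points need fixing. First, the bookkeeping in your opening paragraphs overshoots the constant: since $\widetilde T$ need not be contained in $T$, the comparison $\|\na v\|_{L^2(\widetilde T)}\le C\,\|\na v\|_{L^2(T)}$ is itself a Chebyshev-type transfer, and combining it with the inverse estimate on $T$ and the return transfer $\|v\|_{L^2(T)}\le C\Theta_K^{1/2}\|v\|_{L^2(\widetilde T)}$ costs $\Theta_K^{1/2}$ twice, i.e.\ yields $\Theta_K$ rather than the stated $\Theta_K^{1/2}$. The correct execution --- which your final paragraph's ``thin band'' picture implicitly contains --- is to apply the standard shape-regular inverse estimates on an enlarged straight triangle $T^+\supset\widetilde T$ with the same apex, obtained by pushing the base outward by the deviation allowed in Definition \ref{def:2.3} (still shape regular since $\eta_K\le\eta_0<1/2$), so that $\|\na v\|_{L^2(\widetilde T)}\le\|\na v\|_{L^2(T^+)}$ is free and only the single transfer $\|v\|_{L^2(T^+)}\le C\Theta_K^{1/2}\|v\|_{L^2(\widetilde T)}$ is paid; the same remark applies to the trace bound, where the curved side is handled by the endpoint (pointwise) Chebyshev evaluation along each ray. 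Second, in the singular case your radial frame is issued from the wrong point: $Q_K$ is an \emph{endpoint} of the chord $E_T^h$ ($=\Ga_{1K}^h$ or $\Ga_{2K}^h$), so rays from $Q_K$ need not meet the curved side exactly once, and Definition \ref{def:2.3} does not measure the deviation relative to $Q_K$. The apex must be $A_T$, the vertex of $T$ opposite the curved side, in the singular case as well --- this is precisely how Definition \ref{def:2.3} and Lemma \ref{lem:2.1} are set up --- and with that correction the singular and type $\cT_3$ configurations indeed require no further change to the cited argument.
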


Now we turn to the introduction of the unfitted finite element method. Let $\E=\E^{\rm side}\cup\E^{\rm bdy}$, where $\E^{\rm side}:=\{e=\pa K\cap\pa K':K,K'\in\cM\}\cup\{e=\pa\widetilde{T}\cap\pa\widetilde{T}':\widetilde{T},\widetilde{T}'\in\widetilde{\cM}(K),K\in\cM^\Ga\cup\cM^\Sigma\}$ and $\E^{\rm bdy}:=\{\Sigma_K=\Sigma\cap K: K\in \cam \}$. Set $\E^\Gamma:=\{\GaK=\Gamma \cap K: K\in \cam \}$. Notice that $\cE^\Ga\subset\cE^{\rm side}$ but $\cE^{\rm bdy}\not\subset\cE^{\rm side}$. For any subset $\widehat{\cam}\subset\cam$ and $\widehat{\E}\subset\E$, we use the notation
\begin{align*}
(u,v)_{\widehat{\cam}}=\sum_{K \in \widehat{\cam}}(u,v)_{K}, \ \ \langle u,v\rangle_{\widehat{\E}}=\sum_{e \in \widehat{\E}}\langle u,v\rangle_{e},
\end{align*}
where $(\cdot,\cdot)_K$ and $\la\cdot,\cdot\ra_e$ denote the inner product of $L^2(K)$ and $L^2(e)$, respectively.

For any $e\in \E$, we fix a unit normal vector $\mathbf{n}_e$ of $e$ with the convention that $\mathbf{n}_e$ is the unit outer normal to $\pa\Om$ if $e\in\E^{\rm bdy}$, and $\mathbf{n}_e$ is the unit outer normal to $\pa\Om_1$ if $e\in\cE^\Ga$. Define the normal function $\mathbf{n}|_e=\mathbf{n}_e\ \forall e\in \E$. For any $v\in H^1(\cM):=\{v\in L^2(\Om):v|_{K}\in H^1(K), K\in\cM\}$, we define the jump operator of $v$ across $e$:
\begin{align}
[\![v]\!]|_e:=v^{-} -v^{+}\ \  \forall e \in \E^{\rm side},\ \ \ \
[\![v]\!]|_e:=v^{-}\ \ \forall e \in \E^{\rm bdy},\label{z2}
\end{align}
where $v^{\pm}(\mathbf{x}):=\lim_{\varepsilon\rightarrow 0^+} v(\mathbf{x}\pm\varepsilon \mathbf{n}_e)$ for any $\mathbf{x}\in e$. The mesh function $h|_e=(h_K+h_{K'})/2$ if $e=\pa K\cap\pa K, K,K'\in\cM$, and $h|_e=h_K$ if $e=\pa\widetilde T\cap\pa\widetilde T',\widetilde T,\widetilde T'\in\widetilde{\cM}(K),K\in\cM^\Ga\cup\cM^\Sigma$ or $e=K\cap\Sigma \in\cE^{\rm bdy}$.

For any $v\in H^1(\cM),g\in L^2(\Sigma)$, we define the liftings $\mathsf{L}(v)\in [\bbX_{{p}}(\cM)]^2$, $\mathsf{L}_1(g)\in [\bbX_{{p}}(\cM)]^2$ such that
\beq
(w,\mathsf{L}(v))_\cM=\la w^-\cdot \mathbf{n},\ls v\rs\ra_{\cE^{\Ga}\cup\cE^{\rm bdy}},\ \
(w,\mathsf{L}_1(g))_\cM=\la w\cdot \mathbf{n},g\ra_{\cE^{\rm bdy}}\ \ \ \forall w\in [\bbX_{{p}}(\cM)]^2.\label{ll1}
\eeq

Our unfitted finite element method is to find $U\in \bbX_{{p}}(\cM)$ such that
\beq\label{a2}
a_h(U,v)=F_h(v)\ \ \ \ \forall v\in \bbX_{{p}}(\cM),
\eeq
where the bilinear form $a_h: H^1(\cM)\times H^1(\cM)\to \R$, and the functional $F_h:H^1(\cM)\to\R$ are
given by
\begin{align}
a_h(v,w)=&(a(\na_h v-\mathsf{L}(v)),\na_h w-\mathsf{L}(w))_\cM+\la\al\lj v\rj,\lj w\rj\ra_{{\cE}^\Ga\cup\cE^{\rm bdy}}\nn\\
&+\la p^{-2}h\na_T\lj v\rj,\na_T\lj w\rj\ra_{\cE^\Ga\cup \cE^{\rm bdy}},\label{a3}\\
F_h(v)=&(f,v)_\cM-(a\mathsf{L}_1(g),\na_h v-\mathsf{L}(v))_\cM+\la\al g,v\ra_{\cE^{\rm bdy}}\nn \\
&+\la p^{-2}h\na_T g,\nabla_T v\ra_{\cE^{\rm bdy}},\label{a33}
\end{align}
where $\nabla_T$ is the surface gradient on $\Gamma$ or $\Sigma$. The interface penalty function $\al\in L^\infty({\cE^\Ga\cup\cE^{\rm bdy}})$ is
\beq
\al |_e=\al_0 \hat a_e\widehat\Theta_eh_e^{-1}p^2\ \ \forall e\in{\cE^\Ga\cup\cE^{\rm bdy}},\label{a34}
\eeq
where $\al_0>0$ is a fixed constant, $\hat a_e=\max\{a_K:e\cap \bar{K} \neq \emptyset\}$, $\widehat\Theta_e=\max\{\Theta_K:e\cap \bar K\not=\emptyset\}$. Here $a_K=\|a\|_{L^\infty(K)}$. We remark that \eqref{a2} is a variant of local discontinuous Galerkin method in Cockburn and Shu \cite{Cockburn} on the mesh with curved elements. The stabilization term $\la\al\lj v\rj,\lj w\rj\ra_{\cE^\Ga}$ in \eqref{a2} plays the key role in weakly capturing the jump behavior of the finite element solution at the interface. The boundary condition is also weakly appeared in the \eqref{a2} through the penalty form.

By Lemma \ref{lem:2.3}, we deduce easily that
\beq\label{w3}
\|a^{1/2}\mathsf{L}(v)\|_\cM\le C\|\al^{1/2}\lj v\rj\|_{\cE^\Ga\cup\cE^{\rm bdy}},
\eeq
where the constant $C$ is independent of $p$, $h_K$ and $\eta_K$ for all $K\in\cM$, and the coefficient $a$.
The stability of the bilinear form $a_h(\cdot,\cdot)$ can be proved by using \eqref{w3} and the standard argument in e.g., \cite[Theorem 2.1]{Chen2021NM}, from which, together with the $hp$ interpolation operator in \cite[Theorem 2.1]{Chen2023waveMC}, one can derive an $hp$ a priori error estimate as e.g., in \cite[Theorem 2.1]{Chen2023JCP}. Here we do not elaborate on the details.

Our main goal in this section is to derive the a posteriori error estimate for the solution $U$ of the problem \eqref{a2}, which is the basis for designing adaptive finite element methods for resolving the geometric singularities of the interface and boundary. The key ingredient in the a posteriori error analysis is an $hp$-quasi-interpolation for $H^1$ functions in $X_p(\cM)$ that we now define. Denote $\Om_h=(U_{K\in\cM}\overline{K})^\circ$ be the domain covering $\Om$ and $\mathbb{V}_p(\cM)=\Pi_{K\in \cM}Q_p(K)$. In \cite[Lemma 3.1]{Chen2021NM}, an $hp$-quasi-interpolation operator $\Pi_{hp}:H^1(\Om_h)\to\mathbb{V}_p(\cM)\cap H^1(\Om_h)$ on $K$-meshes is constructed by extending the idea in Melenk \cite{Melenk2005} for the $hp$-quasi-interpolation on conforming meshes. It is shown in \cite[Lemma 3.1]{Chen2021NM} that for any $v\in H^1(\Om_h)$, there exists a constant $C$ independent of $p$, $h_K$ for all $K\in\cM$ such that
\be
& &\|D^m(v-\Pi_{hp}v)\|_{L^2(K)}\le C(h_K/p)^{1-m}\|\na v\|_{L^2(\omega(K))},\ \  m=0,1,\label{y1}\\
& &\|v-\Pi_{hp}v\|_{L^2(\pa K)}\le C(h_K/p)^{1/2}\|\na v\|_{L^2(\omega(K))},\label{y2}
\ee
where $\omega(K)=\{K'\in\cM:K'\subset\supp(\psi_P)\ \forall P\in\mathcal{N}^0\ \mbox{such that }\psi_P|_K\not=0\}$. By Assumption (H3), ${\rm diam}(\omega(K))\le Ch_K$.

\begin{lem}\label{lem:2.4}
There exists a quasi-interpolation operator $\pi_{hp}:H^1(\Om)\to \bbX_p(\cM)$ such that for any $v\in H^1(\Om)$ and $K\in\cM$,
\be
& &\|D^m(v-\pi_{hp}v)\|_{L^2(K)}\le C(h_K/p)^{1-m}\|\na v\|_{L^2(\omega(K))},\ \ m=0,1,\label{y3}\\
& &\|v-\pi_{hp}v\|_{L^2(\pa K)}\le C(h_K/p)^{1/2}\|\na v\|_{L^2(\omega(K))}. \label{y3.5}
\ee
Moreover, for any $\widetilde{T}\in\widetilde{\cM}(K)$, $K\in\cM^\Ga\cup\cM^\Sigma$,
\beq
\|v-\pi_{hp}v\|_{L^2(\pa\widetilde{T})}\le C(h_K/p)^{1/2}\|\na v\|_{L^2(\omega(K))}. \label{y4}
\eeq
The constant $C$ is independent of $p$, $h_K$ and $\eta_K$ for all $K\in\cM$.
\end{lem}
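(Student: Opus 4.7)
The plan is to build $\pi_{hp}$ on top of the existing Cartesian-mesh $hp$-quasi-interpolation $\Pi_{hp}$ from \cite[Lemma 3.1]{Chen2021NM}, modifying it only on interface and boundary elements so the output lands in $\bbX_p(\cM)$ while preserving the approximation properties \eqref{y1}-\eqref{y2}. First I would extend $v\in H^1(\Om)$ to $\tilde v\in H^1(\Om_h)$ by a bounded local extension operator satisfying $\|\na\tilde v\|_{L^2(\omega(K))}\le C\|\na v\|_{L^2(\omega(K)\cap\Om)}$, and set $w:=\Pi_{hp}\tilde v\in\mathbb{V}_p(\cM)\cap H^1(\Om_h)$. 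On any element $K\in\cM$ with $K\subset\Om_i$ I simply take $\pi_{hp}v|_K:=w|_K\in Q_p(K)$, for which \eqref{y3}-\eqref{y3.5} follow directly from \eqref{y1}-\eqref{y2} and the stability of the extension.

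On an interface or boundary element $K\in\cM^\Ga\cup\cM^\Sigma$, I would construct $\phi_K\in U_p(K)$ on the shape-regular straight-sided triangulation $\cM(K)$ of Lemma \ref{lem:2.1} and then take $\pi_{hp}v|_K:=\mathbb{E}_K(\phi_K)\in X_p(K)$ via the natural extension \eqref{z1}. The DOFs of $\phi_K$ lying on $\pa K$ are chosen to coincide with those of $w|_{\pa K}$; this is consistent because the trace of the $Q_p$ polynomial $w|_K$ along any side of $K$ is a single degree-$p$ polynomial in the edge variable, exactly matching the piecewise $P_p$ edge trace carried by $U_p(K)$. The interior DOFs of each $T\in\cM(K)$ are then defined by a $p$-uniformly stable projection of $w|_T$ onto $P_p(T)$, using for instance Gauss--Lobatto-type nodes on $T$. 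Conformity of $\pi_{hp}v$ within each subdomain $\Om_i$ then follows: two triangles of $\cM(K)$ sharing an interior straight edge inside $K_i^h$ produce identical nodal data because they read off the single smooth polynomial $w|_K$ at the same nodes, and on $\pa K$ the trace of $\phi_K$ equals that of $w$, which matches the trace of $\pi_{hp}v|_{K'}$ for any regular $\cM$-neighbor $K'\subset\Om_i$.

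The error bounds on an interface or boundary element would then be obtained by splitting $v-\pi_{hp}v=(v-w)+(w-\pi_{hp}v)$: the first piece is handled by \eqref{y1}-\eqref{y2} together with the extension stability, while the second is controlled on each curved triangle $\widetilde T=\sigma_K(T)$ by combining the stability of the local projection on the shape-regular straight triangle $T$ with a comparison between $L^2$ and trace norms on $T$ and $\widetilde T$, afforded by the deviation bound $\eta_K\le\eta_0$ of Assumption (H2) in the spirit of Lemma \ref{lem:2.3}. The boundary estimates \eqref{y3.5} and \eqref{y4} arise from the same analysis applied along $\pa K$ and $\pa\widetilde T$ via trace inequalities on $T$, carried to $\pa\widetilde T$ through (H2). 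The main obstacle will be $p$-uniform control of the local $Q_p$-to-$P_p$ projection on interface and boundary elements: since $w|_K$ has total degree up to $2p$, some degree reduction is genuinely needed and the constant has to be independent of $p$, $h_K$, and $\eta_K$. The key to overcoming it is that the boundary DOFs of $\phi_K$ are already pinned exactly by the $P_p$ trace of $w$, so that the reduction is confined to interior DOFs, where standard polynomial approximation on shape-regular triangles together with the domain inverse estimates of Lemma \ref{lem:2.3} provides the required uniform bound.
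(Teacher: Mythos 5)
Your overall architecture (extend $v$, apply the Cartesian quasi-interpolant $\Pi_{hp}$, keep it on non-cut elements, modify it on cut elements into $X_p(K)$ via $U_p(K)$ and $\mathbb{E}_K$) is reasonable, but the step you yourself flag as the main obstacle is a genuine gap, and the justification you offer for it does not work. On a cut element, $w|_K\in Q_p(K)$ has total degree $2p$, so you must reduce it to $P_p$ on each triangle $T\in\cM(K)$ while matching prescribed traces. For the $m=1$ case of \eqref{y3} you need $\|\nabla(w-\phi_K)\|_{L^2(K)}\le C\|\nabla v\|_{L^2(\omega(K))}$ with $C$ independent of $p$; an $L^2$-stable projection combined with the inverse estimates of Lemma \ref{lem:2.3} (which cost $p^2h_K^{-1}$) turns an $L^2$ error of size $(h_K/p)\|\nabla v\|$ into an $H^1$ bound of size $p\|\nabla v\|$, so the constant is not $p$-uniform. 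What you actually need is a constrained, simultaneously $H^1$- and trace-stable polynomial extension/reduction on a triangle, uniformly in $p$ --- this is a nontrivial polynomial-lifting result, not ``standard polynomial approximation at Gauss--Lobatto-type nodes,'' and it is precisely the piece your proposal leaves unproved. A second, smaller inaccuracy: your conformity argument on interior edges of $\cM(K)$ assumes the triangles ``read off the single polynomial $w|_K$''; but on an edge not parallel to the axes the trace of $w|_K$ has degree $2p$, so the shared edge data must be defined by a common reduction rule (e.g.\ a fixed interpolation of $w|_e$), not by $w$ itself --- fixable, but as written the premise is wrong. (Only on the sides of $K$, which are axis-parallel, is the trace a genuine degree-$p$ polynomial.)

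The paper avoids this entire difficulty with a degree-halving device: for $p\ge2$ it sets $\pi_{hp}v=\Pi_{hk}(\widetilde v|_{\Om_h})$ with $k=\lfloor p/2\rfloor$, so that on every element the interpolant lies in $Q_k\subset P_{2k}\subset P_p$ and hence is automatically in $X_p(K)$ on cut elements and globally conforming in each subdomain; no projection, no degree reduction, and the estimates \eqref{y1}--\eqref{y2} at degree $k\sim p/2$ give \eqref{y3}--\eqref{y3.5} up to a harmless constant. Only $p=1$ needs a bespoke construction (the bilinear $\Pi_{h1}$ is not piecewise linear on $\cM(K)$), and there the paper subtracts the mean $v_K$ and lifts the piecewise-linear boundary trace $v_h-v_K$ triangle by triangle using the classical liftings of Babu\v{s}ka et al., where inverse estimates cost only fixed constants since $p=1$. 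If you want to salvage your full-degree route, you would have to import a genuine $p$-uniform trace-constrained lifting theorem on triangles; otherwise the degree-halving trick is the simpler and complete argument. Also note \eqref{y4} is obtained in the paper from \eqref{y3}--\eqref{y3.5} via the multiplicative trace inequality on the straight triangles plus the curved-domain trace inequality of Xiao et al., rather than through an $\eta_K$-based comparison of $T$ with $\widetilde T$.
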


\begin{proof} We first recall the following well-known multiplicative trace inequality
\beq\label{yy1}
\|v\|_{L^2(\pa T)}\le Ch_T^{-1/2}\|v\|_{L^2(T)}+C\|v\|_{L^2(T)}^{1/2}\|\na v\|_{L^2(T)}^{1/2}\ \ \forall v\in H^1(T), T\in\cM(K).
\eeq
Next, for any $K\in\cM^\Ga$, we have the following trace inequality on curved domains in Xiao et al. \cite{Wang}
\beq\label{yy2}
\|v\|_{L^2(\Ga_K)}\le C\|v\|_{L^2(K_i)}^{1/2}\|v\|_{H^1(K_i)}^{1/2}+\|v\|_{L^2(\pa K_i\bs\Ga_K)}\ \ \forall v\in H^1(K), i=1,2.
\eeq
A similar inequality for $\Sigma_K,K\in\cM^\Sigma$ also holds. Then \eqref{y4} follows from \eqref{y3}-\eqref{y3.5} by using \eqref{yy1}-\eqref{yy2}.

Now we prove \eqref{y3}-\eqref{y3.5}. Denote by $\widetilde v\in H^1(\R^2)$ the Stein extension of $v\in H^1(\Om)$ such that $\|\widetilde v\|_{H^1(\R^2)}\le C\|v\|_{H^1(\Om)}$. For $p=2k$ or $p=2k+1$, $k\ge 1$, since $Q_k(K)\subset P_{2k}(K)$, by setting $\pi_{hp}v=\Pi_{hk}(\widetilde v|_{\Om_h})$ we obtain \eqref{y3}-\eqref{y3.5} by \eqref{y1}-\eqref{y2}.

It remains the case when $p=1$. For any $v\in H^1(\Om)$, we let $v_h=\Pi_{h1}(\widetilde v|_{\Om_h})\in\mathbb{V}_1(\cM)\cap H^1(\Om_h)$. By \eqref{y1} we have
\beq
\|D^m(v-v_h)\|_{L^2(K)}\le Ch_K^{1-m}\|\na v\|_{L^2(\omega(K))},\ \  m=0,1.\label{y5}
\eeq
Thus if $K$ is included in $\Om_i$, $i=1,2$, we define $\pi_{h1}v=v_h$ in $K$, which satisfies the desired estimates \eqref{y3}-\eqref{y3.5}. For any $K\in\cM^\Gamma\cup\cM^\Sigma$, we notice that $v_h$ is bilinear in $K$ that is not in $X_1(K)$ . Let $v_K=\frac 1{|K|}\int_Kv d\mathbf{x}$. Then the standard scaling argument yields
\beq
\|D^m(v-v_K)\|_{L^2(K)}\le Ch_K^{1-m}\|\na v\|_{L^2(K)},\ \ m=0,1.\label{y6}
\eeq
Now we are going to lift the piecewise linear polynomial $v_h-v_K$ on $\pa K$ to a finite element function in $U_1(K)$. We only consider the case shown in the right of Fig.\ref{fig:2.3}. The other cases are similar. By the classical polynomial lifting result in Babu\v{s}ka et al. \cite[Lemma 7.1]{Babu} and the scaling argument, we know that there exists a $F_{11}\in P_1(K^h_{11})$ such that $F_{11}=v_h-v_K$ on $A_KB_K$ and for $m=0,1$,
\ben
\|D^mF_{11}\|_{L^2(K_{11}^h)}&\le& Ch_K^{1-m}(h_K^{-1/2}\|v_h-v_K\|_{L^2(A_KB_K)}+|v_h-v_K|_{H^{1/2}(A_KB_K)})\\
&\le&Ch_K^{1-m}\|v_h-v_K\|_{H^1(K)}.
\een
This implies by \eqref{y5}-\eqref{y6} that $\|D^mF_{11}\|_{L^2(K_{11}^h)}\le Ch_K^{1-m}\|\na v\|_{L^2(\omega(K))}$, $m=0,1$. Similarly, by the polynomial lifting in \cite[Lemma 7.2]{Babu} for two sides, there exist $F_{23}\in P_1(K_{23}^h)$ and $F_{25}\in P_1(K_{25}^h)$ such that $F_{23}=v_h-v_K$ on $\pa K\cap\pa K_{23}^h$, $F_{25}=v_h-v_K$ on $\pa K\cap\pa K_{25}^h$, and $\|D^mF_{23}\|_{L^2(K_{23}^h)}+\|D^mF_{25}\|_{L^2(K_{25}^h)}\le Ch_K^{1-m}\|\na v\|_{L^2(\omega(K))}$, $m=0,1$. Since $F_{23}=F_{11}$ at $A_K$, $F_{25}=F_{11}$ at $B_K$, we can define linear lifting functions $F_{21}\in P_1(K_{21}^h), F_{22}\in P_1(K_{22}^h)$ and then $F_{24}\in P_1(K_{24}^h)$ such that $F_{24}=v_h-v_K$ on $\pa K\cap K_{24}^h$ and
\ben
\|D^mF_{21}\|_{L^2(K_{21}^h)}+\|D^mF_{22}\|_{L^2(K_{22}^h)}+\|D^m F_{24}\|_{L^2(K_{24}^h)}\le Ch_K^{1-m}\|\na v\|_{L^2(\omega(K))},\ \ m=0,1.
\een
Now define $F\in U_1(K)$ by $F|_{K_{ij}^h}=F_{ij}$, $i=1,2$, $j=1,\cdots, J_i^K$, then we have $\|D^m F\|_{L^2(K)}\le Ch_K^{1-m}\|\na v\|_{L^2(\omega(K))}$. Let $\pi_{h1}v=v_K+\mathbb{E}_K(F)$, where $\mathbb{E}_K(F)$ is defined in \eqref{z1}. Then $\pi_{h1}v\in X_1(K)$, $\pi_{h1}v=v_h$ on $\pa K$, and since $p=1$, by Lemma \ref{lem:2.3} and \eqref{y6} we know that
\ben
\|D^m(v-\pi_{h1}v)\|_{L^2(K)}&\le&\|D^m(v-v_K)\|_{L^2(K)}+\|D^m\mathbb{E}_K(F)\|_{L^2(K)}\\
&\le&\|D^m(v-v_K)\|_{L^2(K)}+C\|D^m F\|_{L^2(K)}\\
&\le&Ch_K^{1-m}\|\na v\|_{L^2(\omega(K))},\ \ m=0,1.
\een
This shows \eqref{y3}. The estimate \eqref{y3.5} follows from \eqref{y3} by the multiplicative trace inequality. This completes the proof.
\end{proof}

Let $U\in \bbX_{{p}}(\cam)$ be the solution of the problem \eqref{a2}, we define the element and jump residuals
\begin{align*}
R(U)|_K=f+{\rm div}_h(a\nabla_h U) \quad \forall K\in \cam,\ \
J(U)|_{e}=\jpl a \nabla_h U\cdot {\bf n}\jpr|_{e}\quad  \forall e \in \cE^{\rm side}.
\end{align*}
We also define the functions $\Lambda : \Pi_{K\in \cam}L^2(K)\rightarrow \mathbb{R}$ and $\hat{\Lambda}: \Pi_{e\in \cE}L^2(e)\rightarrow \mathbb{R}$ as
\begin{align*}
\Lambda|_{K}=\|a^{1/2}\|_{L^\infty(K)}\|a^{-1/2}\|_{L^\infty(\omega(K))} \quad \forall K \in \cam,\ \
\hat{\Lambda}|_{e}=\max \{\Lambda_K: e\cap \bar{K} \neq \emptyset\}\quad \forall e\in \cE.
\end{align*}

\begin{lem}\label{lem:2.5}
Let $w\in H^1_0(\Om)$ and $w_h=\pi_{hp}w\in\mathbb{X}_p(\cM)$ be defined in Lemma \ref{lem:2.4}. Define $\zeta\in L^\infty(\Om_h)$ by $\zeta|_K=p^{1/2}\Theta_K^{1/2}\Lambda_K\ \forall K\in\cM$. Then there exists a constant $C$ independent of $p$, $h_K$ and $\eta_K$ for all $K\in\cM$, and the coefficient $a$ such that $\|a^{1/2}\zeta^{-1}\mathsf{L}(w_h)$$\|_\cM\le C\|a^{1/2}\na w\|_\cM$.
\end{lem}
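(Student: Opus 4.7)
My plan is a duality argument built on the defining identity \eqref{ll1} for $\mathsf{L}$. Writing
$$\|a^{1/2}\zeta^{-1}\mathsf{L}(w_h)\|_\cM^2=(a\zeta^{-2}\mathsf{L}(w_h),\mathsf{L}(w_h))_\cM,$$
I would insert $\mathbf{q}=a\zeta^{-2}\mathsf{L}(w_h)$ (or its $L^2$-projection onto $[\bbX_p(\cM)]^2$, since $\zeta$ is piecewise constant on $\cM$ and so $\mathbf{q}$ is not in general conforming across element interfaces inside $\Om_i$) into the right-hand side of \eqref{ll1}. Passing to the projection preserves the pairing since $\mathsf{L}(w_h)\in[\bbX_p(\cM)]^2$. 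Using $w\in H^1_0(\Om)$, which gives $[\![w]\!]=0$ on $\cE^\Gamma$ and $w=0$ on $\Sigma$, I can replace $[\![w_h]\!]$ by $[\![w_h-w]\!]$ and obtain
$$\|a^{1/2}\zeta^{-1}\mathsf{L}(w_h)\|_\cM^2=\langle\mathbf{q}^-\cdot\mathbf{n},[\![w_h-w]\!]\rangle_{\cE^\Gamma\cup\cE^{\rm bdy}}.$$

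Next I would apply edge-wise Cauchy--Schwarz with the weight $X_e=a_i^{-1/2}\Lambda_K p^{-1/2}h_K^{1/2}$ on the $\Omega_i$-side of $e\subset\pa K$, $K\in\cM^\Gamma\cup\cM^\Sigma$. Lemma \ref{lem:2.3} bounds the trace of $\mathsf{L}(w_h)$ on $e$ by $Cph_K^{-1/2}\Theta_K^{1/2}\|\mathsf{L}(w_h)\|_{L^2(K_i)}$; combining with the identity $\zeta_K^2=p\Theta_K\Lambda_K^2$ yields
$$X_e\|\mathbf{q}^-\cdot\mathbf{n}\|_{L^2(e)}\le C\,a_i^{1/2}\zeta_K^{-1}\|\mathsf{L}(w_h)\|_{L^2(K_i)}.$$
For the other factor, Lemma \ref{lem:2.4}, in particular \eqref{y4}, gives $\|[\![w_h-w]\!]\|_{L^2(e)}\le C(h_K/p)^{1/2}\|\na w\|_{L^2(\omega(K))}$, and the definition $\Lambda_K=\|a^{1/2}\|_{L^\infty(K)}\|a^{-1/2}\|_{L^\infty(\omega(K))}$ together with $a_i\le\|a\|_{L^\infty(K)}$ implies $a_i^{1/2}\Lambda_K^{-1}\le(\min_{\omega(K)}a)^{1/2}$, so
$$X_e^{-1}\|[\![w_h-w]\!]\|_{L^2(e)}\le C\|a^{1/2}\na w\|_{L^2(\omega(K))}.$$

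Summing over edges, applying discrete Cauchy--Schwarz, and invoking the finite-overlap property of the patches $\omega(K)$ from Assumption (H3) yields
$$\|a^{1/2}\zeta^{-1}\mathsf{L}(w_h)\|_\cM^2\le C\,\|a^{1/2}\zeta^{-1}\mathsf{L}(w_h)\|_\cM\,\|a^{1/2}\na w\|_\cM,$$
from which the conclusion follows on dividing. The main obstacle is the very first step: the natural dual test function $a\zeta^{-2}\mathsf{L}(w_h)$ is not a member of $[\bbX_p(\cM)]^2$ because $\zeta$ varies from element to element within $\Omega_i$, so one must justify the testing either through a stability estimate for the $L^2$-projection onto $[\bbX_p(\cM)]^2$ or through a localized element-by-element representation of $\mathsf{L}(w_h)$; the remainder is careful bookkeeping to align the weights $p$, $h_K$, $\Theta_K$, $\Lambda_K$, and $a_i$ so that the trace-inverse and interpolation bounds combine with a constant independent of $p$, $h_K$, $\eta_K$, and the coefficient $a$.
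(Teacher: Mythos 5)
Your proposal is essentially the paper's own proof: the paper tests the defining relation \eqref{ll1} with $v=a\zeta^{-2}\mathsf{L}(w_h)$, uses $w\in H^1_0(\Om)$ (so $\lj w\rj=0$ on $\cE^\Ga$ and $w=0$ on $\Sigma$) together with Lemma \ref{lem:2.4}, and then combines the trace/inverse bound of Lemma \ref{lem:2.3} on each $K\in\cM^\Ga\cup\cM^\Sigma$ with \eqref{y4}, $\zeta|_K=p^{1/2}\Theta_K^{1/2}\Lambda_K$ and $a_K^{1/2}\Lambda_K^{-1}=\|a^{-1/2}\|_{L^\infty(\omega(K))}^{-1}$ to get exactly your elementwise bound $C\|a^{1/2}\zeta^{-1}\mathsf{L}(w_h)\|_{L^2(K)}\|a^{1/2}\na w\|_{L^2(\omega(K))}$, after which summation over $\cM^\Ga\cup\cM^\Sigma$ and division by $\|a^{1/2}\zeta^{-1}\mathsf{L}(w_h)\|_\cM$ finish the argument; so your weight bookkeeping with $X_e$ is just an edge-wise rewriting of the paper's element-wise estimate. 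The only place you diverge is the $L^2$-projection detour, and as sketched it does not yield your displayed identity: replacing $\mathbf{q}=a\zeta^{-2}\mathsf{L}(w_h)$ by its projection $P\mathbf{q}$ indeed preserves the volume pairing with $\mathsf{L}(w_h)$, but \eqref{ll1} then produces $\la (P\mathbf{q})^-\cdot\mathbf{n},\lj w_h\rj\ra_{\cE^\Ga\cup\cE^{\rm bdy}}$ rather than $\la \mathbf{q}^-\cdot\mathbf{n},\lj w_h\rj\ra$, and since the projection onto the subdomain-conforming space $[\bbX_p(\cM)]^2$ is nonlocal, the element-by-element trace bounds you then need for $P\mathbf{q}$ are not immediate. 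The paper treats the step you worry about as unproblematic: it simply asserts $a\zeta^{-2}\mathsf{L}(w_h)\in[\bbX_p(\cM)]^2$ and tests with it directly (your observation that $\zeta$ varies between neighboring elements inside each $\Om_i$ is a fair point that this assertion is stated without justification, but no projection or localization argument is used there); if you adopt that step, the rest of your argument coincides with the published proof.
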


\begin{proof} By the definition of the lifting operator in \eqref{ll1} we know that $(v,\mathsf{L}(w_h))_\cM=\la v^-\cdot\mathbf{n},\lj w_h\rj\ra_{\cE^\Ga\cup\cE^{\rm bdy}} \ \forall v\in[\mathbb{X}_p(\cM)]^2$. By taking $v=a\zeta^{-2}\mathsf{L}(w_h)\in[\mathbb{X}_p(\cM)]^2$, we have
\beq\label{w1}
\|a^{1/2}\zeta^{-1}\mathsf{L}(w_h)\|_\cM^2=\sum_{K\in\cM^\Ga}\la [a\zeta^{-2}\mathsf{L}(w_h)]^-\cdot\mathbf{n},\lj w_h\rj\ra_{\Ga_K}
+\sum_{K\in\cM^\Sigma}\la [a\zeta^{-2}\mathsf{L}(w_h)]^-\cdot\mathbf{n},\lj w_h\rj\ra_{\Sigma_K}.
\eeq
By Lemma \ref{lem:2.3} and Lemma \ref{lem:2.4}, for any $K\in\cM^\Ga$,
\ben
\la [a\zeta^{-2}\mathsf{L}(w_h)]^-\cdot\mathbf{n},\lj w_h\rj\ra_{\Ga_K}&\le&
Ca_K^{1/2}(\zeta|_K)^{-2}p^{1/2}\Theta^{1/2}_K\|a^{1/2}\mathsf{L}(w_h)\|_{L^2(K)}\|\na w\|_{L^2(\omega(K))}\\
&\le&C\|a^{1/2}\zeta^{-1}\mathsf{L}(w_h)\|_{L^2(K)}\|a^{1/2}\na w\|_{L^2(\omega(K))}.
\een
Similarly, for any $K\in\cM^\Sigma$,
\ben
\la [a\zeta^{-2}\mathsf{L}(w_h)]^-\cdot\mathbf{n},\lj w_h\rj\ra_{\Sigma_K}\le C\|a^{1/2}\zeta^{-1}\mathsf{L}(w_h)\|_{L^2(K)}\|a^{1/2}\na w\|_{L^2(\omega(K))}.
\een
This completes the proof by inserting above two estimates to \eqref{w1}.
\end{proof}

For any $v\in H^1(\cM)$, we define the following DG norm
\ben
\| v \|_{\rm DG}^2= \|a^{1/2}\na v\|_{\cM}^2+\|\alpha^{1/2}\lj v \rj \|_{\cE^\Ga\cup\cE^{\rm bdy}}^2+\|p^{-1}h^{1/2}\nabla_T \lj v \rj\|_{\cE^\Ga\cup \cE^{\rm bdy}}^2.
\een
The following theorem is the main result of this section.

\begin{thm}\label{thm:2.1}
Let $u\in H^1(\Omega)$ be the weak solution of \eqref{m1}-\eqref{m2} with $g\in H^1(\partial \Omega)$ and $U\in \bbX_{{p}}(\cam)$ be the solution of \eqref{a2}. Then there exists a constant $C$ independent of $p$, $h_K$ and $\eta_K$ for all $K\in\cM$, and the coefficient $a$ such that
\begin{align*}
\|u-U\|_{\rm DG}  \leq C\left(\sum_{K\in\cam}\xi_K^2\right)^{1/2},
\end{align*}
where for each $K\in \cam$, the local a posteriori error estimator
\begin{align*}
\xi_K^2=&\left(\|a^{-1/2}(h/p)\Lambda R(U)\|_K^2+\|\hat{a}^{-1/2}(h/p)^{1/2}\hat{\Lambda}J(U)\|_{\cE_K}^2\right)\\
&+\left(\|\al^{1/2}p^{1/2}\widehat{\Theta}^{1/2} \hat{\Lambda} \jpl U \jpr\|_{\Ga_K}^2+\|\al^{1/2}p^{1/2}\widehat{\Theta}^{1/2}\hat\Lambda(U-g)\|_{\Sigma_K}^2\right)\\
&+\left(\|\hat{a}^{1/2} p^{-1}h^{1/2}\widehat{\Theta}^{1/2}\hat\Lambda\nabla_T\jpl U\jpr\|_{\Gamma_K}^2+\|\hat{a}^{1/2} p^{-1}h^{1/2}\widehat{\Theta}^{1/2}\hat\Lambda\nabla_{T}(U-g)\|_{\Sigma_K}^2\right).
\end{align*}
Here $\cE_K=\{e\in\cE^{\rm side}:e\cap\bar K\not=\emptyset\}$.
\end{thm}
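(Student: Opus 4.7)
The plan is to bound $\|u-U\|_{\rm DG}^2$ by its three pieces separately. The jump and tangential-gradient-of-jump contributions on $\cE^\Ga\cup\cE^{\rm bdy}$ are immediate, because $[\![u]\!]_\Gamma=0$ and $u|_\Sigma=g$ give $\lj u-U\rj=-\lj U\rj$ on $\cE^\Ga$ and $\lj u-U\rj=g-U$ on $\cE^{\rm bdy}$; these are controlled by the jump- and boundary-type terms in $\xi_K$ (the extra factors $p^{1/2}\widehat\Theta^{1/2}\hat\Lambda$ in the estimator being bounded below by a positive constant). Hence the essential task is to bound the broken gradient seminorm $\|a^{1/2}\nabla_h(u-U)\|_\cM$.

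For this I would split $\nabla_h(u-U)=(\nabla_h(u-U)-\mathsf{L}(u-U))+\mathsf{L}(u-U)$. The lifting piece satisfies $\|a^{1/2}\mathsf{L}(u-U)\|_\cM\le C\|\alpha^{1/2}\lj u-U\rj\|_{\cE^\Ga\cup\cE^{\rm bdy}}$ by \eqref{w3} (the proof of which extends to $u-U$ since the lifting depends only on the jumps and its range lies in $[\bbX_p(\cM)]^2$), and the consistent-gradient piece $\|a^{1/2}(\nabla_h(u-U)-\mathsf{L}(u-U))\|_\cM^2$ equals $a_h(u-U,u-U)$ minus the already-controlled jump and tangential-jump contributions in \eqref{a3}. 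To bound $a_h(u-U,u-U)$, first establish the consistency identity $a_h(u,v_h)=F_h(v_h)$ for every $v_h\in\bbX_p(\cM)$ by element-wise integration by parts in \eqref{m1}-\eqref{m2}, using the interface conditions on $\Gamma$, the boundary condition on $\Sigma$, and the identity $(w,\mathsf{L}(v))_\cM=\la w^-\cdot\mathbf{n},\lj v\rj\ra_{\cE^\Ga\cup\cE^{\rm bdy}}$ to cancel the interface- and boundary-flux contributions produced by the integration by parts against the lifting pieces in $a_h$ and $F_h$. This yields the Galerkin orthogonality $a_h(u-U,v_h)=0$, so that $a_h(u-U,u-U)=a_h(u-U,(u-U)-v_h)$ for any $v_h\in\bbX_p(\cM)$.

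For the main step I would take $v_h=\pi_{hp}\phi$ where $\phi\in H^1_0(\Omega)$ is a conforming surrogate for $u-U$ of the form $\phi=u-\widetilde U$, with $\widetilde U\in H^1(\Omega)$ a conforming reconstruction of $U$ satisfying $\widetilde U|_\Sigma=g$ and $\|\nabla\phi\|_\Omega\le C\|u-U\|_{\rm DG}$. Expanding $a_h(u-U,(u-U)-\pi_{hp}\phi)$ element-wise, integration by parts in each $K\in\cM$ produces the interior residual $R(U)$ paired with $(u-U)-\pi_{hp}\phi$, reassembly across edges produces the flux-jump $J(U)$ paired with the trace of $(u-U)-\pi_{hp}\phi$, and the stabilization and lifting pieces of $a_h$ contribute exactly $\lj U\rj$, $U-g$, and their tangential derivatives on $\cE^\Ga\cup\cE^{\rm bdy}$. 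Cauchy--Schwarz with the weights $(h/p)\Lambda$, $(h/p)^{1/2}\hat\Lambda$ and $p^{1/2}\widehat\Theta^{1/2}\hat\Lambda$ matching $\xi_K$, together with Lemma \ref{lem:2.4} for the interpolation-error factors on $K$, $\pa K$ and each curved $\pa\widetilde T\in\widetilde\cM(K)$ and Lemma \ref{lem:2.5} for the auxiliary lifting $\mathsf{L}(\pi_{hp}\phi)$, then yields $a_h(u-U,u-U)\le C\bigl(\sum_K\xi_K^2\bigr)^{1/2}\|u-U\|_{\rm DG}$, after which the $\|u-U\|_{\rm DG}$ factor is absorbed on the left. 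The main obstacle will be the construction of the conforming reconstruction $\widetilde U$ of $U$ with $\widetilde U|_\Sigma=g$ and a sharp bound on $\|\widetilde U-U\|_{\rm DG}$ in terms of the jump-type pieces of $\xi_K$, simultaneously compatible with hanging nodes (Assumption (H3)), the subdomain conformity of $\cM_i$ (Assumption (H4)), and the curved subelements of the new type $\cT_3$ singular interface and boundary elements from Lemmas \ref{lem:2.1}-\ref{lem:2.2}, uniformly in $p$, $h_K$, $\eta_K$, and in the coefficient ratio $a_1/a_2$---the last uniformity being precisely the role of the coefficient-dependent weights $\Lambda$, $\hat\Lambda$ in $\xi_K$.
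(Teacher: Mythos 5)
Your plan hinges on the claimed consistency identity $a_h(u,v_h)=F_h(v_h)$ for all $v_h\in\bbX_p(\cM)$ and the resulting Galerkin orthogonality $a_h(u-U,v_h)=0$, and this step fails for the lifting-based form \eqref{a3}. Writing out the difference with $\mathsf{L}(u)=\mathsf{L}_1(g)$ one finds
$a_h(u,v_h)-F_h(v_h)=(a\na u,\na_h v_h)_\cM-(f,v_h)_\cM-(a\na u,\mathsf{L}(v_h))_\cM$; element-wise integration by parts turns the first two terms into the flux integral $\la a\na u\cdot\mathbf{n},\lj v_h\rj\ra_{\cE^\Ga\cup\cE^{\rm bdy}}$, but the third term cannot be converted into this integral, because the defining identity \eqref{ll1} of the lifting holds only when it is tested against discrete fields $w\in[\bbX_p(\cM)]^2$, and $a\na u$ is not in $[\bbX_p(\cM)]^2$. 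So the scheme is not consistent in the sense you need, the orthogonality $a_h(u-U,v_h)=0$ is false in general, and the whole chain "$a_h(u-U,u-U)=a_h(u-U,(u-U)-v_h)$, then absorb" collapses at its first link. In addition, the ingredient you yourself label the main obstacle --- a conforming reconstruction $\widetilde U$ with $\widetilde U|_\Sigma=g$ and $\|\widetilde U-U\|_{\rm DG}$ controlled by the jump terms of $\xi_K$ --- is never constructed, so even setting aside orthogonality the argument is incomplete precisely where the coefficient-robustness and the new singular elements enter.

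The paper's proof avoids any consistency statement. It defines $\widetilde U\in H^1(\Om)$ with $\widetilde U=g$ on $\Sigma$ by the auxiliary problem \eqref{ww1}, $(a\na\widetilde U,\na v)=(a\na_h U,\na v)$ for all $v\in H^1_0(\Om)$ (Lax--Milgram), and splits $\|u-U\|_{\rm DG}$ by the triangle inequality into the conforming part $u-\widetilde U$ and the nonconforming part $U-\widetilde U$, whose jump contributions are exactly the $\lj U\rj$ and $U-g$ terms of $\xi_K$. For $w=u-\widetilde U\in H^1_0(\Om)$ the definition of $\widetilde U$ gives $(a\na(u-\widetilde U),\na w)_\cM=(f,w)_\cM-(a\na_h U,\na_h w)_\cM$ directly, and the residual terms ${\rm I}_1,\dots,{\rm I}_8$ are generated by subtracting the discrete equation \eqref{a2} tested with $w_h=\pi_{hp}w$; these are estimated with Lemma \ref{lem:2.4} (quasi-interpolation, including the curved subtriangles of the type $\cT_3$ elements), Lemma \ref{lem:2.3}, and Lemma \ref{lem:2.5} for $\mathsf{L}(w_h)$, which is exactly where the weights $\Lambda,\hat\Lambda,\widehat\Theta$ appear. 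The nonconforming part $\|a^{1/2}\na_h(U-\widetilde U)\|_\cM$ is then bounded by the jump terms via the argument of \cite[Theorem 3.1]{Chen2021NM}. If you want to keep an orthogonality-free version of your outline, you would have to replace your Galerkin-orthogonality step by this (or an equivalent) conforming/nonconforming decomposition and supply the nonconforming bound rather than defer it.
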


\begin{proof} The proof modifies the argument in \cite[Theorem 3.1]{Chen2021NM}, which extends the argument for deriving a posteriori error estimates for DG methods in e.g., Karakashian and Pascal \cite{Pascal} and \cite{Bonito}. Let $\widetilde U\in H^1(\Om)$ such that $\widetilde{U}=g$ on $\pa\Om$, and
\beq\label{ww1}
\int_\Om a\na\widetilde U\cdot\na vdx=\int_{\Om}a\na_h U\cdot\na v dx\ \ \forall v\in H^1_0(\Om).
\eeq
By the Lax-Milgram lemma, $\widetilde U$ is well-defined. By the triangle inequality, since $\lj u-\widetilde U\rj=0$ on $\cE^\Ga\cup\cE^{\rm bdy}$,
\be
\|u-U\|_{\rm DG}&\le&\|u-\widetilde{U}\|_{\rm DG}+\|\widetilde{U}-U\|_{\rm DG}\nn\\
&\le&\|a^{1/2}\na(u-\widetilde{U})\|_\cM+\|a^{1/2}\na_h(U-\widetilde U)\|_\cM+\|\al^{1/2}\lj U\rj\|_{\cE^\Ga}+\|p^{-1}h^{1/2}\na_T\lj U\rj\|_{\cE^{\Ga}}\nn\\
& &\quad+\|\al^{1/2}(U-g)\|_{\cE^{\rm bdy}}+\|p^{-1}h^{1/2}\na_T(U-g)\|_{\cE^{\rm bdy}}.\label{ww2}
\ee
We now estimate the first two terms in \eqref{ww2}. We first estimate the conforming component $\|a^{1/2}\na(u-\widetilde U)\|_\cM$. Set  $w=u-\widetilde U\in H^1_0(\Om)$ and let $w_h=\pi_{hp}w\in\mathbb{X}_p(\cM)$ be defined in Lemma \ref{lem:2.4}. By \eqref{a3} we have
\be
(a\na(u-\widetilde{U}),\na w)_\cM&=&(f,w)_\cM-(a\na_hU,\na_h w)_\cM\nn\\
&=&(f,w-w_h)_\cM-(a\na_h U,\na_h(w-w_h))_\cM\nn\\
& &-(a\na_h U,\mathsf{L}(w_h))_\cM+(a(\mathsf{L}_1(g)-\mathsf{L}(U)),\na_hw_h-\mathsf{L}(w_h))_\cM\nn\\
& &+\la\al\lj U\rj,\lj w_h\rj\ra_{\cE^\Ga}+\la p^{-2}h\na_T\lj U\rj,\na_T\lj w_h\rj\ra_{\cE^\Ga}\nn\\
& &+\la\al(U-g),w_h\ra_{\cE^{\rm bdy}}+\la p^{-2}h\na_T(U-g),\na_T\lj w_h\rj\ra_{\cE^{\rm bdy}}\nn\\
&:=&{\rm I}_1+\cdots+{\rm I}_8.\label{ww3}
\ee
By using integration by parts and the DG magic formula $\lj(a\na_h U\cdot\mathbf{n})(w-w_h)\rj=\lj a\na_hU\cdot\mathbf{n}\rj(w-w_h)^++(a\na_hU\cdot\mathbf{n})^-\lj w-w_h\rj$ on any $e\in\cE$, we have
\ben
{\rm I}_1+{\rm I}_2+{\rm I}_3=(R(U),w-w_h)_\cM-\la J(U),(w-w_h)^+\ra_{\cE^{\rm side}}.
\een
Now by using Lemma \ref{lem:2.4}, we get
\ben
|{\rm I}_1+{\rm I}_2+{\rm I}_3|\le C(\|a^{-1/2}(h/p)\Lambda R(U)\rj_\cM+\|\hat a^{-1/2}(h/p)^{1/2}\hat{\Lambda}J(U)\|_{\cE^{\rm side}})\|a^{1/2}\na w\|_\cM.
\een
Next by the definition of the lifting operators in \eqref{ll1}
\ben
{\rm I}_4=-\la a(\na_hw_h-\mathsf{L}(w_h))^-\cdot\mathbf{n},\lj U\rj\ra_{\cE^\Ga}+\la a(\na_hw_h-\mathsf{L}(w_h))^{-}\cdot\mathbf{n},g-U\ra_{\cE^{\rm bdy}}.
\een
By Lemma \ref{lem:2.3} and Lemma \ref{lem:2.5}, we have
\ben
|{\rm I}_4|\le C(\|\al^{1/2}\zeta\lj U\rj\|_{\cE^\Ga}+\|\al^{1/2}\zeta(U-g)\|_{\cE^{\rm bdy}})\|a^{1/2}\na w\|_\cM.
\een
The other terms can be estimated similarly
\ben
|{\rm I}_5+\cdots+{\rm I}_8|&\le&C(\|\al^{1/2}\zeta\lj U\rj\|_{\cE^\Ga}+\al^{1/2}\zeta(U-g)\|_{\cE^{\rm bdy}})\|a^{1/2}\na w\|_\cM\\
& &+C\|\hat{a}^{1/2} p^{-1}h^{1/2}\widehat{\Theta}^{1/2}\hat\Lambda\nabla_T\jpl U\jpr\|_{\cE^\Gamma}\|a^{1/2}\na w\|_\cM\\
& &+C\|\hat{a}^{1/2} p^{-1}h^{1/2}\widehat{\Theta}^{1/2}\hat\Lambda\nabla_{T}(U-g)\|_{\cE^{\rm bdy}}\|a^{1/2}\na w\|_\cM.
\een
Substituting the estimates for ${\rm I}_1,\cdots, {\rm I}_8$ into \eqref{ww3}, we obatin
\beq\label{ww4}
\|a^{1/2}\na(u-\widetilde{U})\|_\cM\le C\left(\sum_{K\in\cM}\xi_K^2\right)^{1/2}.
\eeq
The nonconforming component $\|a^{1/2}\na_h(U-\widetilde{U})\|_\cM$ can be estimated by the same argument as that in \cite[Theorem 3.1]{Chen2021NM} to obtain
\ben
\|a^{1/2}\na_h(U-\widetilde{U})\|_\cM&\le&C(\|\hat a^{1/2}ph^{-1/2}\lj U\rj\|_{\cE^\Ga}+\|\hat a^{1/2}p^{-1}h\na_T\lj U\rj \|_{\cE^\Ga})\\
& &\quad +C(\|\hat a^{1/2}ph^{-1/2}(U-g)\|_{\cE^{\rm bdy}}+\|\hat a^{1/2}p^{-1}h^{1/2}\na_T(u-g)\|_{\cE^{\rm bdy}}).
\een
This, together with \eqref{ww2} and \eqref{ww4}, completes the proof.
\end{proof}

We remark that in above proof $\mathsf{L}(w_h)\not=0$, which is different from the proof in \cite[Theorem 3.1]{Chen2021NM} where $\mathsf{L}(w_h)=0$ due to the unfitted finite element space being $\mathbb{H}_p(\cM)$ and the domain being assumed to be a union of rectangles so that $\pi_{hp}w$ can be chosen in $\mathbb{H}_p(\cM)\cap H^1_0(\Om)$.

The lower bound of the a posteriori error estimator $\xi_K$ can also be established by the similar argument as that in \cite[Theorem 4.1]{Chen2021NM}. We leave it to the interested readers.

\section{The merging algorithm}\label{sec_merging_alg}

In this section, we propose the merging algorithm to generate the induced mesh for the piecewise smooth interface and boundary. A reliable algorithm to generate the induced mesh is developed in \cite[Algorithm 6]{Chen2023JCP} for arbitrarily shaped $C^2$-smooth interfaces. The algorithm is based on the concept of the admissible chain of interface elements, the classification of patterns for merging elements, and appropriate ordering in generating macro-elements. Our strategy to treat the piecewise smooth interface or boundary is first to design singular patterns that are large macro-elements surrounding each singular point of the interface or boundary, then use  \cite[Algorithm 6]{Chen2023JCP} to deal with the remaining interface or boundary elements inside which the interface or boundary is $C^2$-smooth.

We now recall some notation from \cite{Chen2023JCP}.
A chain of interface or boundary elements $\mathfrak{C}=\{G_1\rightarrow G_2 \rightarrow \cdots \rightarrow G_n\}$ orderly consists of $n$ interface or boundary elements $G_i \in \mathcal{T}^{\Gamma}$ or $G_i\in\cT^\Sigma$, $i=1,\cdots,n$, such that $\bar\Gamma_{G_i}\cup\bar\Gamma_{G_{i+1}}$ is a continuous curve, $1\le i\le n-1$. We call $n$ the length of $\mathfrak{C}$ and denote $\mathfrak{C}\{i\}=G_i$, $i=1,\cdots,n$.

For any $K\in\cT$, we call $N(K)\in\cT$ a neighboring element of $K$ if $K$ and $N(K)$ share a common side. Set $\cS(K)_0=\{K\}$, and for $j\ge 1$, denote $\cS(K)_j=\{K''\in\cT:\exists\,K'\in\cS(K)_{j-1}\ \mbox{such that }\bar K''\cap\bar K'\not=\emptyset\}$, that is, $\cS(K)_j$ is the set of all $k$-th layer
elements surrounding $K$, $0\le k\le j$. Obviously, $\cS(K)_0\subset\cS(K)_1\subset\cdots\subset\cS(K)_j$ for any $ j\ge 1$.

The following definition of the admissible chain is introduced in \cite{Chen2023JCP} for interface elements. We include here also for the boundary elements.

\begin{Def}\label{def:3.1}
A chain of interface or boundary elements $\mathfrak{C}$ is called admissible if the following rules are satisfied.
\begin{description}
\item[$1.$] For any $K\in\mathfrak{C}$, all elements in $\cS(K)_2$ have the same size as
that of $K$.
\item[$2.$] If $K\in\mathfrak{C}$ has a side $e$ such that $\bar e\subset \Om_i$, then $e$ must be a side of some neighboring element $N(K)\subset\Om_i$, $i=1,2$.
\item[$3.$] Any elements $K\in\cT\backslash\cT^\Gamma\cup\cT^\Sigma$ can be neighboring at most two elements in $\mathfrak{C}$.
\item[$4.$] For any $K\subset\Om_i$, the interface or boundary elements in {$\cS(K)_j$, $j=1,2$,} must be connected in the sense that the interior of the closed
set $\cup\{\bar G: G\in\cS(K)_{j}\cap{{\cT}}^\Ga\}$ or $\cup\{\bar G: G\in\cS(K)_{j}\cap{{\cT}}^\Sigma\}$ is a connected domain.
\end{description}
\end{Def}

Notice that for the smooth interface or boundary, only type $\cT_1,\cT_2$ elements occur in the admissible chain, the following theorem is proved in
\cite[Theorem 3.1]{Chen2023JCP} for the admissible chain of interface elements whose start and end elements are of type $\cT_2$ elements. A closer check of the proof reveals that the conclusion also holds when the start or end elements are two neighboring type $\cT_1$ elements. The extension of the theorem to the chain of boundary elements is straightforward. Here we omit the details.

\begin{thm}\label{thm:3.1}
Let $\delta_0\in (0,1/5]$. For any admissible chain of interface or boundary elements $\mathfrak{C}$ with length $n\geq 2$, if the start elements of $\mathfrak{C}$ satisfy $\mathfrak{C}(1)\in \mathcal{T}_2$ or $\mathfrak{C}(1),\mathfrak{C}(2)\in \cT_1$ and the end elements of $\mathfrak{C}$ also satisfy $\mathfrak{C}(n)\in \mathcal{T}_2$ or $\mathfrak{C}(n-1),\mathfrak{C}(n)\in \cT_1$, then the merging algorithm in \cite[Algorithm 6]{Chen2023JCP} can successfully generate a locally induced mesh with input $\mathfrak{C}$.
\end{thm}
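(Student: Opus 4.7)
The plan is to build on the proof of \cite[Theorem 3.1]{Chen2023JCP} and extend it to the two situations not covered there: (i) the admissible chain starts or ends with two neighboring type $\cT_1$ elements rather than a single type $\cT_2$ element, and (ii) the chain consists of boundary elements rather than interface elements. Since the merging algorithm in \cite[Algorithm 6]{Chen2023JCP} proceeds sequentially from the two ends of $\mathfrak{C}$ inward, repeatedly identifying one of the five merging patterns and processing it, the argument is local and can be modified endpoint by endpoint.

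First I would isolate the step in the proof of \cite[Theorem 3.1]{Chen2023JCP} that uses the assumption $\mathfrak{C}(1)\in\cT_2$. There, the $\cT_2$ start is used to seed the first macro-element whose outlet side has a controlled structure so that the inductive merging can proceed along the remainder of the chain. For the new case $\mathfrak{C}(1),\mathfrak{C}(2)\in\cT_1$, the key observation is that two consecutive type $\cT_1$ interface elements are arranged so that $\Ga$ enters the first through one side and leaves the second through the opposite side of their union; as a geometric configuration for the inlet of the chain, this is indistinguishable from a single type $\cT_2$ element at doubled scale. Using this equivalence, the same initial pattern (after an obvious reflection or rotation) produces a macro-element whose geometric index satisfies $\de_M\ge\de_0$ with the bound $\de_0\le 1/5$ used exactly as in \cite{Chen2023JCP}, and whose outlet side matches that of the $\cT_2$ starting pattern. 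The symmetric argument handles the end of the chain.

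Once the endpoint macro-elements are in place, the remaining interior of $\mathfrak{C}$ is an admissible chain inside which the interface is $C^2$-smooth and only type $\cT_1,\cT_2$ patterns occur; this is exactly the setting of \cite[Theorem 3.1]{Chen2023JCP}. The admissibility rules in Definition \ref{def:3.1}, and in particular the size uniformity throughout $\cS(K)_2$ and the connectivity of $\cS(K)_j\cap\cT^\Ga$, are stated in a pointwise manner on $K\in\mathfrak{C}$ and therefore remain available after absorbing the first two (resp.\ last two) elements into a single macro-element, so no conflict arises between the modified endpoint step and the interior inductive step.

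For the extension to boundary chains, one replaces $\Ga$ by $\Sigma$ and the two-sided decomposition $K=K_1\cup\Ga_K\cup K_2$ by the one-sided truncation $K\cap\Om$ with polygonal model $K^h$. The geometric index $\de_K$ of Definition \ref{def:2.2} specializes by restricting $i$ to the single index corresponding to $\Om$, and the admissibility rules and pattern classification of \cite{Chen2023JCP} transfer line by line because the local geometry near a boundary element is identical to the local geometry of an interface element restricted to one side. Consequently, the same combinatorial verification of the five patterns produces the induced mesh. The main obstacle I expect is the careful bookkeeping at the endpoint case $\mathfrak{C}(1),\mathfrak{C}(2)\in\cT_1$: one must verify that the outlet element produced by the new initial pattern has precisely the side configuration assumed at the start of the inductive step in \cite[Algorithm 6]{Chen2023JCP}, since otherwise the pattern matching in the interior would break down.
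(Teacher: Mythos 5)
Your proposal takes essentially the same route as the paper, which in fact gives no detailed argument for this theorem: it simply asserts that ``a closer check'' of the proof of \cite[Theorem 3.1]{Chen2023JCP} shows the conclusion also holds when the start or end consists of two neighboring type $\cT_1$ elements, and that the extension to boundary chains is straightforward, omitting all details. Your sketch --- treating the two consecutive $\cT_1$ elements as a $\cT_2$-like inlet for the initial merging pattern, running the interior induction of \cite[Algorithm 6]{Chen2023JCP} unchanged, and handling boundary chains by the one-sided restriction of the same local geometry --- is precisely that closer check, so it is consistent with (and more explicit than) the paper's own justification.
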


Here and in the following, we call a locally induced mesh of a chain $\mathfrak{C}$ if it is a union of large elements that covers the interface or boundary included in the elements in $\mathfrak{C}$.

\subsection{The singular pattern}

We start by introducing the definition of the singular pattern.

\begin{Def}\label{def:3.2}
For each singular element $K\in\cT$, there exists a macro-element $M(K)$ that is a union of elements surrounding $K$ such that $M(K)$ is a large element. Moreover, if $G_1,G_2\in\cT$ are two interface or boundary elements in $\cS(M(K))_1\backslash M(K)$, then for $i=1,2$, either $G_i\in\cT_2$ or $G_i\in\cT_1$ in which case there is a neighboring element $G_i'\in\cT_1$ in $\cS(M(K))_1\backslash M(K)$. Denote $\cP_i=\{G_i\}$ if $G_i\in\cT_2$ or $\cP_i=\{G_i,G_i'\}$ if $G_i\in\cT_1$. We assume ${\rm dist}(\cP_1,\cP_2)\ge 2$, where ${\rm dist}(\cP_1,\cP_2)$ is the minimum number of non-interface or non-boundary elements in $\cS(M(K))_1\backslash M(K)$ connecting $\cP_1,\cP_2$, see Fig.\ref{fig:x1}. We call $M(K)$ the singular pattern including $K$ and $\cP_1$, $\cP_2$ the outlet patches of the macro-element $M(K)$.
\end{Def}

\begin{figure}[!ht]
\centering
{\includegraphics[width=0.8\textwidth]{./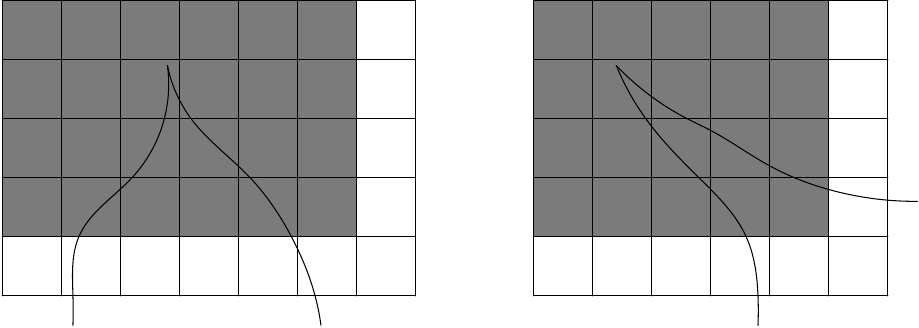}}
\caption{Examples of the singular pattern.} \label{fig:x1}
\end{figure}

We recall that $\cS(M(K))_1\backslash M(K)$ is the union of the first layer elements in $\cT$ surrounding $M(K)$. The assumptions on the patch of elements $\cP_1,\cP_2$ are important for us to use \cite[Algorithm 6]{Chen2023JCP} to construct macro-elements for all remaining elements in $\cT^\Ga$ or $\cT^\Sigma$ inside which the interface or boundary is $C^2$ smooth.

Since for each singular element $K$, $K_i=K\cap\Om_i$, $i=1,2$, is locally a curved sector, the following lemma indicates that one can always construct a singular pattern $M(K)$ if the mesh is locally refined around each singular point.

\begin{lem}\label{lem:3.1}
Let $S$ be a sector bounded by two half lines $L_1,L_2$ with vertex $Q\in K$, $K\in\cT$. Then there exists a rectangular macro-element $M(K)$ that is a large element and for $i=1,2$, $\cP_i=\{K'\in\cT: K'\subset \cS(M(K))_1\backslash M(K), K'\cap L_i\not=\emptyset\}$ includes either one $\cT_2$ element or two neighboring $\cT_1$ elements. Moreover, ${\rm dist}(\cP_1,\cP_2)\ge 2$ and the element singular index $\widetilde\de_{M(K)}$ is independent of the element size $h_K$.
\end{lem}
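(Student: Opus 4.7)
The plan is to take $M(K)$ to be a $(2k+1)\times(2k+1)$ Cartesian block of cells of size $h_K$ containing $K$, where $k=k(\de_0,\theta)$ is an integer depending only on $\de_0$ and the opening angle $\theta$ of the sector $S$. By local refinement near $Q$ one may assume that every cell of $\cT$ in $\cS(K)_{2k+1}$ has size $h_K$ and sits in a uniform Cartesian pattern, so such a block is genuinely available inside $\cT$.

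First I would verify the singular index. Placing $K$ as the central cell, one has ${\rm dist}(Q,e)\ge kh_K$ for every side $e$ of $M(K)$, while $|e^\perp|/2=(2k+1)h_K/2$. Therefore $\widetilde\de_{M(K)}\ge 2k/(2k+1)\ge 2/3\ge\de_0$ for all $k\ge 1$, a bound that is independent of $h_K$.

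Next I identify the outlet patches. Each half-line $L_i$ exits $M(K)$ at a point $p_i$ lying in the relative interior of some side of $\pa M(K)$ (using the paper's convention on vertex intersections to rule out exact corner hits). Because $Q$ sits near the centre of $M(K)$, the slope of $L_i$ measured against the normal to the exit side is bounded by $1+O(1/k)$, so in the one-cell-thick ring $\cS(M(K))_1\bs M(K)$ the ray $L_i$ can only occupy at most two consecutive cells. The first such cell $G_i$ is side-adjacent to $M(K)$: if $L_i$ leaves $G_i$ through the opposite side then $G_i\in\cT_2$ and $\cP_i=\{G_i\}$; otherwise $L_i$ enters a neighbouring cell $G_i'\in\cS(M(K))_1\bs M(K)$, which is either another side-adjacent cell or a corner-adjacent cell of $M(K)$, both are of type $\cT_1$, and $\cP_i=\{G_i,G_i'\}$. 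A borderline case in which $L_i$ could graze a corner of $G_i'$ and enter a third cell is excluded either by the paper's perturbation convention or by a one-cell shift of $M(K)$, both of which preserve the previous estimates.

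Finally I address the separation and the geometric index. Since $L_1,L_2$ emerge from a $Q$ at distance $\Theta(L)$ from $\pa M(K)$, $L=(2k+1)h_K$, with fixed angular separation $\theta$, the Euclidean distance along $\pa M(K)$ between the two exit points satisfies $|p_1-p_2|\ge c(\theta)L$ for some constant $c(\theta)>0$ depending only on $\theta$. Choosing $k$ so that $c(\theta)(2k+1)\ge 4$ guarantees at least two non-interface cells of the ring separating $\cP_1$ from $\cP_2$, hence ${\rm dist}(\cP_1,\cP_2)\ge 2$. The same geometric estimate splits each side of $M(K)$ into pieces of length $\Theta(L)$, which yields $\de_{M(K)}\ge\de_0$ once $k$ is large enough. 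The main obstacle I expect is precisely this choice of $k$ for sectors whose opening angle is close to $0$ or $2\pi$: there $c(\theta)$ is of order $\sin(\theta/2)$ and $k$ has to be taken proportional to $1/\sin(\theta/2)$, but this is consistent with the statement, which only requires $\widetilde\de_{M(K)}$ to be independent of $h_K$, not of $\theta$.
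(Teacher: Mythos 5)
Your overall strategy---surround the singular cell by a Cartesian block whose size in cells depends only on the sector, check the singular index, and argue that each ray exits through one $\cT_2$ cell or two neighbouring $\cT_1$ cells of the ring, with the two exit points well separated---is the same kind of explicit construction the paper uses. But there is a genuine gap exactly where the paper's proof does its real work. Your estimate $|p_1-p_2|\ge c(\theta)L$ only separates the two exit points from \emph{each other}; it gives no control on the distance of either exit point to the \emph{corners} of $M(K)$. With a fixed symmetric $(2k+1)\times(2k+1)$ block, a ray can exit arbitrarily close to (or at) a corner of $M(K)$, no matter how large $k$ is and for any opening angle. Then the side of $M(K)$ it crosses is cut into two pieces whose length ratio is not bounded below, so $\de_{M(K)}\ge\de_0$ fails and $M(K)$ is not a large element in the sense of Definition \ref{def:2.2}; your sentence ``the same geometric estimate splits each side of $M(K)$ into pieces of length $\Theta(L)$'' does not follow from anything you established. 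The same near-corner exits are also where the outlet-patch claim breaks: since the slope relative to the exit normal can be as large as $1+1/k$, the transverse drift across the one-cell-thick ring slightly exceeds one cell, so near a corner the ray can meet three ring cells, or a $\cT_1$ cell followed by a $\cT_2$ corner cell, neither of which is an admissible patch. You dismiss this as a ``borderline case'' handled by the perturbation convention or a one-cell shift, but the convention only removes exact vertex or tangency intersections, not exits genuinely within a cell of a corner, and the one-cell shift is asserted rather than proved---it has to repair both rays simultaneously while preserving the singular index, the patch structure and the separation.

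This corner-clearance issue is precisely what the paper's proof is organized around: instead of a symmetric block it takes $M(K)=(-m_1h_1,m_2h_1)\times(-n_1h_2,n_2h_2)$ and chooses $m_1,m_2,n_1,n_2$ case by case according to the directions of $L_1,L_2$ (comparing $|k_i|$ with $h_2/h_1$ and distinguishing signs), so that each ray leaves through a side compatible with its slope---hence meets at most two ring cells lying along a single side---and so that each exit point keeps at least one full cell of clearance from the corners of $M(K)$; this yields $\de_{M(K)},\widetilde\de_{M(K)}\ge\min(1/(m_1+m_2),1/(n_1+n_2))$ with $m_i,n_i$ depending only on the sector, not on $h_K$. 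To repair your argument you would have to supply the analogous adaptation of the block to the ray directions (or prove that some bounded shift or enlargement always achieves corner clearance for both rays at once), which is essentially the paper's case analysis.
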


\begin{figure}[!ht]
\centering
\includegraphics[width=0.8\textwidth]{./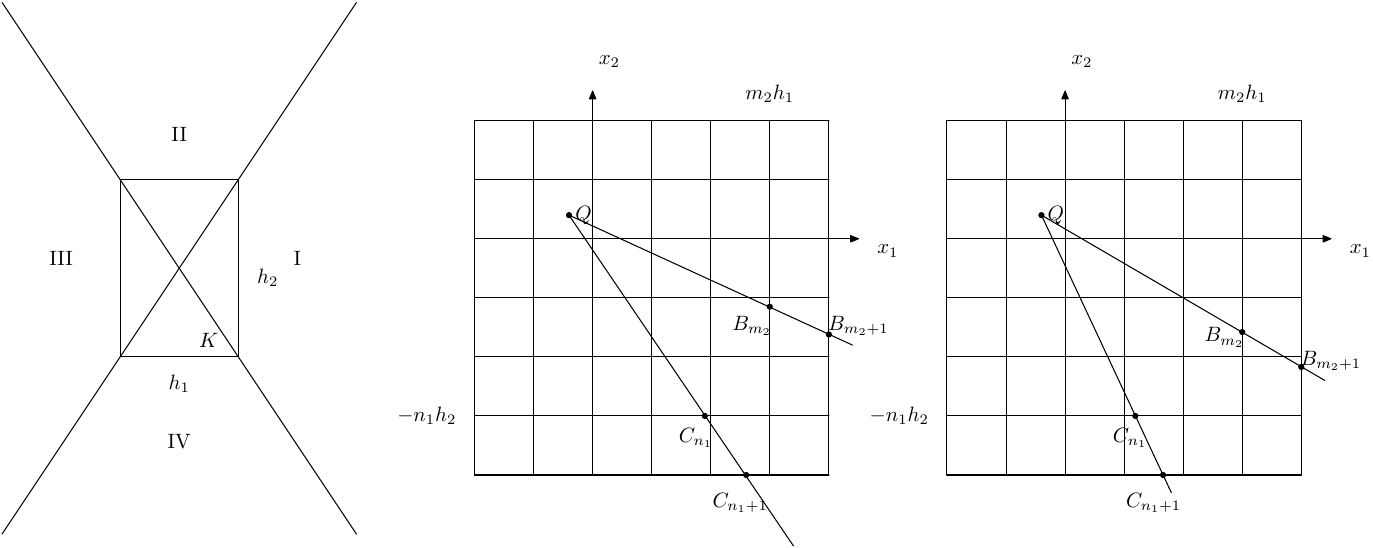}
\caption{The figures used in the proof of Lemma \ref{lem:3.1}.}\label{fig:x2}
\end{figure}

\begin{proof}
Let $h_1,h_2$ be the length of the horizontal and vertical sides of $K$, respectively. Let $Q=(-\al h_1,\beta h_2)$, $\al,\beta\in (0,1)$, be the vertex of the sector $S$, and for $i=1,2$, the equation of $L_i$ is $x_2-\beta h_2=k_i(x_1+\al h_1)$ with the convention that if $k_i=\infty$, the equation is $x_1+\al h_1=0$. It is clear that if $|k_i|< h_2/h_1$, $L_i$ tends to the infinity in the region ${\rm I, III}$ marked in Fig.\ref{fig:x2}(left), and if $|k_i|>h_2/h_1$, $L_i$ tends to the infinity in the region ${\rm II, IV}$ in Fig.\ref{fig:x2}(left). Let $M(K)=(-m_1h_1,m_2 h_1)\times (-n_1h_2,n_2h_2)$, $m_i,n_i\in\mathbb{N}$, $i=1,2$,
be the macro-element.

In the following, we prove the lemma when $|k_1|\le h_2/h_1, |k_2|>h_2/h_1$.
We first assume $k_1,k_2<0$ so that $\lam_1:=k_1h_1/h_2\in (-1,0)$, $\lam_2:=k_2^{-1}h_2/h_1\in (-1,0)$. Let $m_1=2, n_2=2$ in the macro-element $M(K)$, see Fig.\ref{fig:x2}(middle) and Fig.\ref{fig:x2}(right). Denote by $B_{m_2}=(m_2h_1,b_{m_2}), B_{m_2+1}=((m_2+1)h_1,b_{m_2+1})$ the intersection points of $L_1$ with the lines $x_1=m_2h_1, x_1=(m_2+1)h_1$, and $C_{n_1}=(c_{n_1},-n_1h_2), C_{n_1+1}=(c_{n_1+1}, -(n_1+1)h_2)$ the intersection points of $L_2$ with the lines $x_2=-n_1h_2, x_2=-(n_1+1)h_2$. Obviously, $b_{m_2}-b_{m_2+1}=-k_1h_1\le h_2$, which implies that $L_1$ intersects $\cS(M(K))_1\backslash M(K)$ either with one $\cT_2$ element or two neighboring $\cT_1$ elements. Similarly, $L_2$ intersects $\cS(M(K))_1\backslash M(K)$ with either one $\cT_2$ element or two neighboring $\cT_1$ elements.

In order to guarantee that ${\rm dist}(\cP_1,\cP_2)\ge 2$, we require $b_{m_2+1}\ge -(n_1-1)h_2, c_{n_1}<(m_2-1)h_2$ in the case of Fig.\ref{fig:x2}(middle) or $b_{m_2}\ge -(n_1-1)h_2, c_{n_1+1}\le (m_2-1)h_1$ in the case of Fig.\ref{fig:x2}(right). In the case shown in Fig.\ref{fig:x2}(middle), it is easy to see that $b_{m_2+1}\ge -(n_1-1)h_2, c_{n_1}<(m_2-1)h_2$  is equivalent to $-\lam_1(m_2+1+\al)\le n_1+\beta-1$, $-\lam_2(n_1+\beta)<m_2+\al-1$, which implies $n_1+\beta\ge (1-2\lam_1)/(1-\lam_1\lam_2), m_2+\al+1>(2-\lam_2)/(1-\lam_1\lam_2)$. Thus one can choose
\ben
m_2=\left\lfloor\frac{2-\lam_2}{1-\lam_1\lam_2}\right\rfloor,\ \ n_1=\left\lfloor\frac{1-2\lam_1}{1-\lam_1\lam_2}\right\rfloor+1
\een
to guarantee ${\rm dist}(\cP_1,\cP_2)\ge 2$, where for any $\gamma\in\R$, $\lfloor \gamma\rfloor$ is the integer strictly less than $\gamma$. Similarly, in the case displayed in Fig.\ref{fig:x2}(right), one can take
\ben
m_2=\left\lfloor\frac{1-2\lam_2}{1-\lam_1\lam_2}\right\rfloor+1,\ \ n_1=\left\lfloor\frac{2-\lam_1}{1-\lam_1\lam_2}\right\rfloor
\een
to guarantee ${\rm dist}(\cP_1,\cP_2)\ge 2$. This shows the lemma when $k_1,k_2<0$. The other cases when either one of $k_1,k_2$ or both $k_1,k_2$ are non-negative can be proved analogously. In fact, if $k_1\ge 0, k_2\ge 0$, then $m_1=\lfloor 2\lam_2\rfloor+2, m_2=1, n_1=1, n_2=\lfloor 2\lam_1\rfloor+3$; if $k_1\ge 0,k_2< 0$, then $m_1=2, m_2=\lfloor 2(-\lam_2)^+\rfloor+2, n_1=1, n_2=\lfloor (m_2+1)\lam_1\rfloor+3$; and if $k_1< 0,k_2\ge 0$, then $m_1=\lfloor (n_1+1)\lam_2\rfloor+2,m_2=1,n_1=\lfloor 2(-\lam_1)^+\rfloor+2$, $n_2=2$.

The macro-element is a large element with $\de_{M(K)}, \widetilde\de_{M(K)}\ge\min(1/(m_1+m_2),1/(n_1+n_2))$ by Definition \ref{def:2.2} of large elements. We note that $m_i,n_i$, $i=1,2$, are fixed constants, which depend only on the angle of the sector $S$ at vertex but are independent of $h_K$. This completes the proof when $|k_1|\le h_2/h_1$, $|k_2|>h_2/h_1$.

For the other cases, we just give the macro-element $M(K)$ and omit the details of the proof. By the symmetry, we only need to consider two cases. The first is when $L_1,L_2$ tend to infinity in the same region, for example in the region ${\rm IV}$. Then $|k_i|>h_2/h_1$, $i=1,2$, so that $\lam_i=k_i^{-1}h_2/h_1$ satisfies $|\lam_i|<1$, $i=1,2$. The integers $m_i,n_i$, $i=1,2$, in the macro-element $M(K)$ are defined by
\begin{align*}
&m_1=\max_{i=1,2}\lfloor (n_1+1)\lam_i^+\rfloor+3,\ \
m_2=\max_{i=1,2}\lfloor (n_1+1)(-\lam_i)^+\rfloor+2,\\
&n_1=\left\lfloor\frac{3}{|\lam_1-\lam_2|}\right\rfloor+1,\ \ n_2=2.
\end{align*}
It is clear that $M(K)$ is a large element with $\de_{M(K)},\widetilde\de_{M(K)}\ge\min(1/(m_1+m_2),1/(n_1+n_2))$.

The second case is when $L_1$ and $L_2$ tend to infinity in the opposite regions, for example, $L_1$ in the region ${\rm II}$ and $L_2$ in the region ${\rm IV}$. Then the integers $m_i,n_i$, $i=1,2$, in the macro-element $M(K)$ are defined by
\begin{align*}
m_1=\max_{i=1,2}\lfloor 2\lam_i^+\rfloor+3,\ \
m_2=\max_{i=1,2}\lfloor 2(-\lam_i)^+\rfloor+2,\ \ n_1=1,\ \ n_2=2.
\end{align*}
It is obvious that $M(K)$ is a large element with $\de_{M(K)},\widetilde\de_{M(K)}\ge\min(1/(m_1+m_2),1/(n_1+n_2))$.
\end{proof}

Notice that the macro-element $M(K)$ constructed in Lemma \ref{lem:3.1} is associated with the mesh $\cT$. The following lemma shows that the macro-element constructed is also nested with respect to the quad-refinements of the mesh. This fact is important for us to construct admissible subchains of interface or boundary elements inside which the interface or boundary is smooth. The proof is rather straightforward and we omit the details.

\begin{lem}\label{lem:3.2}
Let $S$ be a sector bounded by two half lines $L_1,L_2$ with vertex $Q\in K$, $K\in\cT$. Let $M(K)$ be the macro-element constructed in Lemma \ref{lem:3.1}. If one refines all elements in $M(K)$ by quad refinement to obtain a new mesh $\cT'$ and $K'\in\cT'$ is the singular element containing $Q$, then the macro-element $M(K')$ constructed on the mesh $\cT'$ satisfies $M(K')\subset M(K)$ and the element singular index $\widetilde\de_{M(K')}(\cT')=\widetilde\de_{M(K)}(\cT)$. Moreover, the interface or boundary elements of $\cT'$ included in $M(K)\backslash M(K')$ consist of either $\cT_2$ or two neighboring $\cT_1$ elements and thus can be merged to generate large elements over the mesh $\cT'$ inside $M(K)$, see Fig.\ref{fig:x3}.
\end{lem}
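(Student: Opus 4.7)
The plan is to run the same coordinate-based case analysis as in the proof of Lemma \ref{lem:3.1}. Place $K$ in the frame used there, so $K=(-h_1,0)\times(0,h_2)$ and $Q=(-\al h_1,\beta h_2)$ with $\al,\beta\in(0,1)$. Quad-refining $K$ produces four subcells of size $h_1'\times h_2'=(h_1/2)\times(h_2/2)$; the singular element $K'$ is the unique subcell containing $Q$, selected by the signs of $\al-1/2$ and $\beta-1/2$. In the corresponding $K'$-centred frame, obtained from the $K$-frame by a shift $(s_x,s_y)$ with $s_x\in\{0,-h_1/2\}$ and $s_y\in\{0,h_2/2\}$, the same point $Q$ has coordinates $(-\al'h_1',\beta'h_2')$ with $\al'\in\{2\al,2\al-1\}$ and $\beta'\in\{2\beta,2\beta-1\}$, all of which lie in $(0,1)$.

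The central observation is that the data entering the construction of Lemma \ref{lem:3.1} are invariant under quad-refinement: the slopes $k_1,k_2$ are geometric properties of the half-lines, and the aspect ratio $h_2/h_1=h_2'/h_1'$ is preserved, so the dimensionless parameters $\lam_i$ coincide for $\cT$ and $\cT'$. Therefore the region/sign case distinction and the explicit formulas of Lemma \ref{lem:3.1} return the same integers $m_1,m_2,n_1,n_2$, and $M(K')$ has the same integer dimensions $(m_1+m_2)\times(n_1+n_2)$ but in units of $h_1'\times h_2'$, i.e., exactly half the linear size of $M(K)$. In the original frame this gives $M(K')=(s_x-m_1h_1/2,\,s_x+m_2h_1/2)\times(s_y-n_1h_2/2,\,s_y+n_2h_2/2)$, and the inclusion $M(K')\subset M(K)$ reduces to a one-line check using $m_1,n_2\ge 2$ and $m_2,n_1\ge 1$, both of which are built into the formulas of Lemma \ref{lem:3.1}. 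The equality $\widetilde\de_{M(K')}(\cT')=\widetilde\de_{M(K)}(\cT)$ then follows from the scale-invariance of the lower bound $\min(1/(m_1+m_2),1/(n_1+n_2))$ supplied by Lemma \ref{lem:3.1}, which depends only on the integers $m_i,n_i$ and is therefore shared by the two macro-elements.

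For the last assertion, note that $\Ga\cap(M(K)\setminus M(K'))\subset L_1\cup L_2$, so every interface or boundary element of $\cT'$ in $M(K)\setminus M(K')$ is cut by a single straight half-line of slope $k_1$ or $k_2$. Because $|k_i|$ is compared with $h_2'/h_1'=h_2/h_1$ exactly as in Lemma \ref{lem:3.1}, the $\cT_1$-versus-$\cT_2$ case analysis there applies verbatim to the refined subcells along $L_i$: each such element is either a $\cT_2$ element, or appears as one of two neighbouring $\cT_1$ elements straddling $L_i$. The inner ends of the resulting chains meet the outlet patches $\cP_i$ of $M(K')$, which have the required type by Definition \ref{def:3.2} applied to $M(K')$, and the outer ends meet the unrefined outlet patches $\cP_i$ of $M(K)$ across $\pa M(K)$, so the end-element hypothesis of Theorem \ref{thm:3.1} is satisfied, and Algorithm 6 of \cite{Chen2023JCP} can merge the chains into large elements supported in $M(K)$.

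The hardest step will be the admissibility verification for the refined chains inside the annulus $M(K)\setminus M(K')$, because rule 1 of Definition \ref{def:3.1} requires that the two-layer neighbourhood $\cS(K)_2$ of every chain element consist of elements of the same size, while a refined chain element close to $\pa M(K)$ has unrefined neighbours outside $M(K)$. The way out is to exploit that the integers $m_1,m_2,n_1,n_2$ produced by Lemma \ref{lem:3.1} were already inflated precisely to enforce $\mathrm{dist}(\cP_1,\cP_2)\ge 2$; by the invariance of the $\lam_i$ this buffer is inherited by $\cT'$ and separates each refined chain from $\pa M(K)$ by enough layers of refined cells for admissibility to hold within $M(K)$.
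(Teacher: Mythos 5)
Your reduction to the invariance of the dimensionless slopes $\lam_1,\lam_2$ and of the aspect ratio $h_2'/h_1'=h_2/h_1$, hence to the same integers $m_1,m_2,n_1,n_2$ on $\cT'$ and the elementary nesting check for $M(K')\subset M(K)$, is sound and is presumably the "straightforward" argument the paper chose to omit. However, your justification of $\widetilde\de_{M(K')}(\cT')=\widetilde\de_{M(K)}(\cT)$ is a non sequitur: the fact that both indices share the lower bound $\min(1/(m_1+m_2),1/(n_1+n_2))$ does not give equality. By Definition \ref{def:2.2} the singular index depends on the relative position of $Q$ inside the macro-element, and under your own construction this position changes, since $(\al,\beta)\mapsto(\al',\beta')$ with $\al'\in\{2\al,2\al-1\}$, $\beta'\in\{2\beta,2\beta-1\}$; for instance with $m_1=m_2=n_1=n_2=2$ and $\al=\beta=0.3$ the index is $0.85$ before and $0.7$ after refinement. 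So what your argument actually proves is that the index does not degrade, i.e.\ both indices are bounded below by the same constant depending only on the opening of the sector (which is exactly what is needed for $\de_s$ in Theorem \ref{thm:3.2}); if you want the literal equality you must position $M(K')$ relative to $Q$ rather than re-run the construction of Lemma \ref{lem:3.1} blindly, and you should make this distinction explicit rather than assert equality from a shared bound. (Also, $m_1\ge 2$ is not available in every case of Lemma \ref{lem:3.1}; the inclusion only needs $m_1\ge1$, $n_2\ge1$, which does hold.)

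The final step would fail as written. The chains of $\cT'$-elements in $M(K)\setminus M(K')$ necessarily run from $\pa M(K')$ out to $\pa M(K)$, so their outermost elements have unrefined neighbours immediately outside $M(K)$ and rule $1$ of Definition \ref{def:3.1} is violated there; the buffer ${\rm dist}(\cP_1,\cP_2)\ge 2$ separates the two outlet patches from each other, not the chains from $\pa M(K)$, so it cannot restore admissibility, and Theorem \ref{thm:3.1} is therefore not applicable to these chains. No admissible-chain machinery is needed here: inside $M(K)$ the mesh $\cT'$ is uniform with unchanged aspect ratio, the interface in the annulus is contained in the straight lines $L_1\cup L_2$, so each interface cell is of type $\cT_2$ or one of two neighbouring $\cT_1$ cells, and each such cell or pair can be merged directly with one or two adjacent cells of $\cT'$ lying inside $M(K)$, exactly as in the basic merging patterns of \cite{Chen2023JCP}; the cells in the outlet direction are precisely what the outlet-patch refinement of Algorithms 1--2 and Fig.\ref{fig:x3} handles. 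Replacing your appeal to Theorem \ref{thm:3.1} by this direct, finite merging argument closes the gap.
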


An important property of the singular pattern is that it stays unchanged if its outlet patch requires refinement due to the refinement of its neighboring elements in the chain of elements connecting to another singular pattern. Let $\cP$ be an outlet patch of $M(K)$ that is connected to another singular pattern by a chain $\mathfrak{G}$. If the neighboring elements of $\cP$ in the chain $\mathfrak{G}$ are refined, one can quad-refine the elements in $\cP$ and their neighboring elements in $\cS(M(K))_1$ so that the interface or boundary in $\cP$ can be covered by two large elements $M_1,M_2$, see Fig.\ref{fig:x3}. If the neighboring elements of $\cP$ in $\mathfrak{G}$ are further refined, one needs only to quad-refine the elements in $M_2$ and possibly their neighboring elements inside $\cS(M(K))_1$ so that the interface or boundary inside $\cP$ can be covered by large elements $M_1, M_{21},M_{22}$. This process of refining the outlet patches can be continued until the elements in the chain $\mathfrak{G}$ are constructed by \cite[Algorithm 6]{Chen2023JCP}. We summarized the algorithm for refining the outlet patch $\cP$ as the following algorithm. In the following, we define the level of the element $L(K)=0$ $\forall K\in \cT_0$.  When $K\in\cT$ is refined by quad refinement to four sub-elements $K_i$, we set $L(K_i)=L(K)+1$, $i=1,2,3,4$. Then we can use the level of an element to denote the size of the element.

\noindent\rule{\textwidth}{0.35mm}
\noindent{\bf{Algorithm 1:}} The algorithm for refining an outlet patch.

\vspace{-0.2cm}
\noindent\rule{\textwidth}{0.35mm}

{\bf{Input:}} {A singular pattern $M(K)$, an outlet patch $\cP$, and the difference of the level of elements $\ell=L(K')-L(K'')$, $K'\in\cP$, $K''\in(\cM^\Ga\cup\cM^\Sigma)\setminus M(K)$ neighboring to $K'$.

{\bf{Output:}} {The refined outlet patch $\cP'$ and a set of large elements $\mathcal{L}(K)$ covering the interface or boundary elements inside $\cP\setminus \cP'$.}

$1^\circ$ Set $i=1$, $\cP_1=\cP$.

$2^\circ$ For $i=1,\cdots,\ell$, do

(i) Quad-refine elements in $\cP_i$ and their neighboring elements inside $\cS(M(K))_1\setminus M(K)$, which are not merged yet, to generate a new mesh, set $\cP_{i+1}$ as the set of interface or boundary elements neighboring $\pa \cS(M(K))_1$.

(ii) Merge the interface or boundary elements in $\cP_i\backslash\cP_{i+1}$ with their neighboring elements to generate a large element $M$. Add $M$ to $\mathcal{L}(K)$.

$3^\circ$ Set $\cP'=\cP_{\ell+1}$.

\vspace{-0.2cm}
\noindent\rule{\textwidth}{0.35mm}

\begin{figure}[!ht]
\centering
\includegraphics[width=0.8\textwidth]{./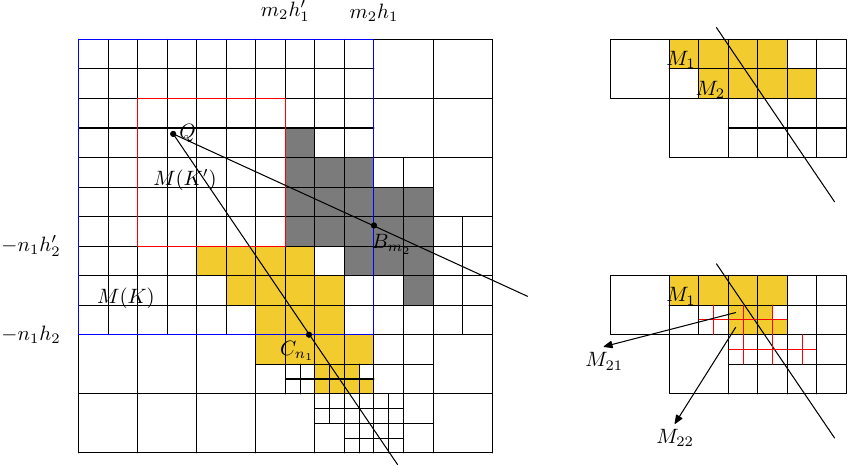}
\caption{The refinement of a singular pattern and its outlet patch, where $h_i'=h_i/2$, $i=1,2$. The macro-element $M(K')=(-2h_1',m_2h_1')\times(-n_1h_2',2h_2)$, $M(K)=(-2h_1,m_2h_1)\times (-n_1h_2,2h_2)$.}\label{fig:x3}
\end{figure}

We formulate the process in the following algorithm for refining a singular pattern if its outlet patch $\cP$ is refined.

\noindent\rule{\textwidth}{0.35mm}
\noindent{\bf{Algorithm 2:}} The merging algorithm for refining a singular pattern.

\vspace{-0.2cm}
\noindent\rule{\textwidth}{0.35mm}

{\bf{Input:}} {A singular pattern $M(K)$.}

{\bf{Output:}} {A locally refined mesh over $M(K)$ and a set of large elements $\mathcal{L}(K)$ covering the interface or boundary inside $M(K)$.}

$1^\circ$ Quad-refine elements in $M(K)$ to obtain a new mesh, find the singular element $K'$ on the new mesh, construct the singular pattern $M(K')$.

$2^\circ$ Merge the elements to generate large elements $N_i, i=1,\cdots, n_K$, which cover the boundary or interface inside $M(K)\bs M(K')$. Add $M(K')$ and $N_i,i=1,\cdots,n_K$, to $\mathcal{L}(K)$.

\vspace{-0.2cm}
\noindent\rule{\textwidth}{0.35mm}

\subsection{The merging algorithm}

After dealing with the singular elements, the remaining interface or boundary elements consist of several chains of elements inside which the interface or boundary is $C^2$-smooth.
Let $\mathfrak{G}$ be a chain of interface or boundary elements connecting two singular patterns $M(K_s)$, $M(K_e)$ with the associated outlet patch $\cP_s$, $\cP_e$, respectively. By definition, the start patch $\cP_s$ and the end patch $\cP_e$ are one of $\cP_i$, $i=1,2$, where $\cP_1=\{G\}, G\in\cT_2$, and $\cP_2=\{G_1,G_2\}, G_1, G_2\in\cT_1$.

One way to construct a locally induced mesh of $\mathfrak{G}$ is to refine the elements in $\cS(K)_2,K\in\mathfrak{G}$, to a uniform size so that the refined interface or boundary elements consist of an admissible chain $\mathfrak{G}'$ and thus one can use the merging algorithm in \cite[Algorithm 6]{Chen2023JCP}. However, it may do too many extra refinements to be suitable for the adaptive finite element algorithm. Our approach is to construct admissible subchains of $\mathfrak{G}$ with possible different element sizes in different subchains. In addition, the start and end patch of each admissible subchain belong to $\cP_1,\cP_2$ so that \cite[Algorithm 6]{Chen2023JCP}  can be used to generate the locally induced meshes of the subchain.

For a given chain $\mathfrak{G}$, let $\mathcal{G}\in \mathbb{R}^{n_a\times 2}$ be a two-dimensional array so that for $i=1,\ldots, n_a$, $\mathcal{G}(i,1)$, $\mathcal{G}(i,2)$ are the index of the start and end element of the $i$th admissible subchain in $\mathfrak{G}$. We use the following algorithm to find $\mathcal{G}$.

\noindent\rule{\textwidth}{0.35mm}
\noindent{\bf{Algorithm 3:}} Find all admissible subchains connecting two singular patterns $M(K_s)$, $M(K_e)$.

\vspace{-0.2cm}
\noindent\rule{\textwidth}{0.35mm}

{\bf{Input:}} {A chain $\mathfrak{G}$ that connects singular patterns $M(K_s), M(K_e)$ associated with outlet patches $\cP_s, \cP_e$, respectively.}

{\bf{Output:}} {An updated chain $\mathfrak{G}$ consisting of admissible subchains covering the interface or boundary inside the elements in $\mathfrak{G}$, and an index matrix $\mathcal{G}\in \mathbb{R}^{n_a\times 2}$ of admissible subchains of $\mathfrak{G}$.}

$1^\circ$ Locally refine the elements in $\mathfrak{G}$ such that $\mathfrak{G}$ satisfies the rules $2,3,4$ in Definition
\ref{def:3.1}. During the refining processes, call the refinement procedure in \cite[\S6]{Bonito} such that
the mesh satisfies Assumption (H3). If the outlet patch $\cP_s$ or $\cP_e$ need to
be refined then call Algorithm 1. Set $i_s=1,n_a=0$.

$2^\circ$ Find the maximum number $i_e$ such that $\mathfrak{G}(i_s:i_e)$ have the same size.

$3^\circ$ {\bf{if}} $\mathfrak{G}(i_e)==\mathfrak{G}(end)$ {\bf{then}}

$\quad\quad$ Set $n_a=n_a+1$ and $\mathcal{G}(n_a,1:2)=[i_s,i_e]$.

%
%
%

$\quad\quad$ {\bf{RETURN}} $\mathcal{G}$ and $\mathfrak{G}$.

$\quad$ {\bf{end}}

$4^\circ$ {\bf{if}} $|L(\mathfrak{G}(i_e+1))-L(\mathfrak{G}(i_e))|\leq 2$ {\bf{then}}

$\quad\quad$ {\bf{if}} ($\mathfrak{G}(i_e)\in \cP_1$ or $\mathfrak{G}(i_e-1:i_e)\in \cP_2$)

$\quad\quad\quad$ and ($\mathfrak{G}(i_e+1)\in \cP_1$ or $\mathfrak{G}(i_e+1:i_e+2)\in \cP_2$) {\bf{then}}

$\quad\quad\quad$ Set $n_a=n_a+1$, $\mathcal{G}(n_a,1:2)=[i_s,i_e]$, $i_s=i_e+1$ and {\bf GOTO} $2^\circ$.

$\quad\quad$ {\bf{end}}

$\quad$ {\bf{else}}

$\quad\quad$ {\bf{if}} $L(\mathfrak{G}(i_e+1))<L(\mathfrak{G}(i_e))$ {\bf{then}}

$\quad\quad\quad$ {\bf{if}} $\mathfrak{G}(i_e+1)\in \cP_e$ {\bf{then}}

$\quad\quad\quad\quad$ Call Algorithm 1 to refine $\cP_e$ to get $\cP_e'$, set $\cP_e=\cP_e'$ and update the $\mathfrak{G}$ as a

$\quad\quad\quad\quad$ chain which connecting to $\cP_s$ and $\cP_e$. {\bf GOTO} $2^\circ$.

$\quad\quad\quad$ {\bf{else}} Refine $\mathfrak{G}(i_e+1)$, {\bf GOTO} $2^\circ$.

$\quad\quad\quad$ {\bf{end}}

$\quad\quad$ {\bf{else}}

$\quad\quad\quad$ {\bf{if}} $\mathfrak{G}(i_e)\in \cP_s$ {\bf{then}}

$\quad\quad\quad\quad$ Call Algorithm 1 to refine $\cP_s$ to get $\cP_s'$, set $\cP_s=\cP_s'$ and update the $\mathfrak{G}$ as a

$\quad\quad\quad\quad$ chain which connecting to $\cP_s$ and $\cP_e$. {\bf GOTO} $2^\circ$.

$\quad\quad\quad$ {\bf{else}}

$\quad\quad\quad$ Refine $\mathfrak{G}(i_e)$

$\quad\quad\quad$ {\bf{if}} $i_s==i_e$ {\bf{then}}

$\quad\quad\quad\quad$ Set $i_s=\mathcal{G}(n_a,1)$ and remove the $n_a$-th row of $\mathcal{G}$, $n_a=n_a-1$.

$\quad\quad\quad$ {\bf{end}}

%
%
%
%
%

$\quad\quad\quad$ {\bf GOTO} $2^\circ$.

$\quad\quad\quad$ {\bf{end}}

$\quad\quad$ {\bf{end}}

$\quad${\bf{end}}

\vspace{-0.2cm}
\noindent\rule{\textwidth}{0.35mm}


We remark that in the worst case, when all elements in $\mathfrak{G}$ are refined to be of the same size to consist of an admissible chain, Algorithm 3 will return one admissible chain, that is, $\mathcal{G}=[1,{\rm length}(\mathfrak{G})]$.

Our final merging algorithm combines the singular patterns and Algorithm 3 of finding admissible subchains to obtain an induced mesh whose elements cover the interface or boundary.

\noindent\rule{\textwidth}{0.35mm}
\noindent{\bf{Algorithm 4:}} The merging algorithm for a closed chain $\mathfrak{C}$.

\vspace{-0.2cm}
\noindent\rule{\textwidth}{0.35mm}

{\bf{Input:}} {A closed chain $\mathfrak{C}$.}

{\bf{Output:}} {The induced mesh Induced$(\mathfrak{\cT})$.}

$1^\circ$ Construct the singular patterns for each singular element, then the remainder interface or boundary elements will consist of some chains, each chain $\mathfrak{G}$ satisfying $\mathfrak{G}(1)\in \cP_1$ or $\mathfrak{G}(1:2)\in \cP_2$ and $\mathfrak{G}(end)\in \cP_1$ or $\mathfrak{G}(end-1:end)\in \cP_2$.

$2^\circ$ For each remainder chain $\mathfrak{G}$, call Algorithm 3 with input $\mathfrak{G}$ to obtain the index matrix $\mathcal{G}\in\mathbb{R}^{n_a\times 2}$.

$3^\circ$ For each index matrix, call \cite[Algorithm 6]{Chen2023JCP} with input $\mathfrak{G}(\mathcal{G}(i,1):\mathcal{G}(i,2)),i=1,\ldots,n_a$, to generate the locally induced mesh.

$4^\circ$ Construct new singular patterns by Algorithm 2 whose outlet patches are refined in steps $2^\circ$.

\vspace{-0.2cm}
\noindent\rule{\textwidth}{0.35mm}

From the definition of the singular patterns and the Theorem \ref{thm:3.1}, the above algorithm will successfully generate the induced mesh for the arbitrarily shaped piecewise smooth interface or boundary. Thus we have the following theorem.

\begin{thm}\label{thm:3.2}
Let $\delta_0=\min(1/5,\de_s)$, where $\de_s$ is the minimum element singular index of all singular elements. For any closed chain of interface or boundary elements $\mathfrak{C}$, Algorithm 4 will generate an induced mesh whose elements cover the interface or boundary included in the elements in $\mathfrak{C}$.
\end{thm}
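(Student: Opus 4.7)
The plan is to verify that the three building blocks invoked by Algorithm 4 -- Lemma \ref{lem:3.1} and Lemma \ref{lem:3.2} on singular patterns, Algorithm 3 producing a decomposition into admissible subchains, and \cite[Algorithm 6]{Chen2023JCP} applied via Theorem \ref{thm:3.1} on each subchain -- combine consistently and that all loops in the algorithm terminate.

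First I would dispose of step $1^\circ$ of Algorithm 4. By Lemma \ref{lem:3.1}, each singular element $K$ sits inside a singular-pattern macro-element $M(K)$ that is a large element with $\widetilde\delta_{M(K)}\ge \delta_s$; by Definition \ref{def:3.2} the two outlet patches of $M(K)$ each consist either of a single $\cT_2$ element or of two neighboring $\cT_1$ elements, and the condition ${\rm dist}(\cP_1,\cP_2)\ge 2$ prevents interference between the two outlets. Removing all singular patterns from the closed chain $\mathfrak{C}$ leaves a finite family of open chains $\mathfrak{G}$ inside each of which the interface or boundary is $C^2$-smooth, and the first one or two and last one or two elements of each $\mathfrak{G}$ lie in $\cP_1$ or $\cP_2$; this is precisely the hypothesis of Theorem \ref{thm:3.1}.

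Second, for step $2^\circ$ I would argue termination and correctness of Algorithm 3. The pre-processing in step $1^\circ$ of Algorithm 3 enforces rules $2$--$4$ of Definition \ref{def:3.1} together with the $K$-mesh condition via the refinement procedure of Bonito--Nochetto; this requires only finitely many refinements. The main loop then scans $\mathfrak{G}$ for a maximal constant-size block and either commits it as an admissible subchain when both ends match a $\cP_1$ or $\cP_2$ structure, or triggers a refinement -- of one chain element, of an outlet patch via Algorithm 1, or of the adjacent singular pattern via Algorithm 2. Termination follows from a monotone quantity such as the sum of absolute level differences $\sum_i |L(\mathfrak{G}(i+1))-L(\mathfrak{G}(i))|$, which is nonnegative and strictly decreases after each refinement step, combined with a uniform upper bound on the maximum level actually needed that is controlled by the geometry of $\mathfrak{C}$. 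Lemma \ref{lem:3.2} is essential here: whenever Algorithm 1 propagates a refinement back into a singular pattern, the new pattern $M(K')\subset M(K)$ is nested with the same lower bound on $\widetilde\delta$, and $M(K)\bs M(K')$ is tiled by large elements whose boundary interface elements are $\cT_2$ or paired $\cT_1$ elements; hence previously committed admissible subchains remain valid.

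Third, steps $3^\circ$--$4^\circ$ are short. The guard in step $4^\circ$ of Algorithm 3 ensures each committed subchain $\mathfrak{G}(\mathcal{G}(i,1):\mathcal{G}(i,2))$ has endpoint structure in $\cP_1$ or $\cP_2$, so by Theorem \ref{thm:3.1} the algorithm of \cite[Algorithm 6]{Chen2023JCP} produces a locally induced mesh consisting of large elements covering the $C^2$-smooth arc of $\Gamma$ or $\Sigma$ in that subchain. Step $4^\circ$ of Algorithm 4 then completes the singular patterns whose outlet patches have been refined, invoking Algorithm 2 and relying again on Lemma \ref{lem:3.2} to produce the nested refined pattern plus covering large elements in the annular region $M(K)\bs M(K')$. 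Union over all chains and singular patterns yields the desired induced mesh.

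The main obstacle I anticipate is the termination and global consistency of Algorithm 3: one must verify that a refinement triggered at one outlet patch does not invalidate admissible subchains already committed on the opposite side of the same singular pattern, and that the coupled refinement of chain elements with singular patterns cannot cycle. Lemma \ref{lem:3.2}, combined with the level-based monotone quantity above, is the key tool: the nesting $M(K')\subset M(K)$ shows that singular patterns shrink in a structurally controlled way, while the tileability of $M(K)\bs M(K')$ by large elements of $\cT_2$ or paired $\cT_1$ type makes the new boundary of each affected chain automatically compatible with the hypotheses of Theorem \ref{thm:3.1}.
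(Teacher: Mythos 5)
Your proposal is correct and follows essentially the same route as the paper, whose entire justification of Theorem \ref{thm:3.2} is the single observation that the singular patterns (Definition \ref{def:3.2}, Lemmas \ref{lem:3.1}--\ref{lem:3.2}) reduce the problem to chains whose start and end elements satisfy the hypotheses of Theorem \ref{thm:3.1}, so that \cite[Algorithm 6]{Chen2023JCP} applies to each admissible subchain produced by Algorithm 3. Your additional discussion of termination and of the stability of already-committed subchains under outlet-patch refinement goes beyond what the paper records (the specific monotone quantity you propose is not obviously strictly decreasing, but your fallback via the uniform bound on the required refinement level suffices), and does not change the underlying argument.
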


\section{Numerical examples}
\label{sec_num}

In this section, we present some numerical examples to confirm our theoretical results. We use a posteriori error estimator in Theorem \ref{thm:2.1} to design the adaptive algorithm combined with our mesh merging algorithm. The hierarchical shape functions in Szab\'o and Babu\v{s}ka \cite[\S 5.3.2]{Szabo2011book} are used on quadrilateral elements. For shape functions on triangular elements, we refer to Adjerid et al. \cite{Adjerid2001} where the orthogonalization is applied to the face shape functions to reduce the condition number of the stiffness matrix. The hanging nodes are treated by modifying the connectivity mapping in Ainsworth and Senior \cite{Ainsworth}, in which they provide the efficient realization on non-uniform Cartesian meshes.

The generic form of our adaptive unfitted finite element method is given below.

\noindent\rule{\textwidth}{0.35mm}
\noindent {\bf Algorithm 5:} General form of an adaptive unfitted finite element method.

 \vspace{-0.2cm}
\noindent\rule{\textwidth}{0.35mm}
Given a tolerance $TOL>0$, the steering parameters $\eta_0\in (0,1/2),\gamma\in (0,1)$, and an initial Cartesian mesh $\cT_0$. Set $\ell=0$ and do

\bigskip
\noindent (1) Let the interface and boundary elements of $\cT_\ell$ form two chains $\mathfrak{C}$ and $\mathfrak{C}'$, respectively. Call the refinement procedure in \cite[\S 6]{Bonito} such that $\cT_\ell$ satisfies Assumption (H3). Use merging Algorithm 4 to generate an induced mesh $\cM_\ell={\rm Induced}(\cT_\ell)$.

\noindent (2) If the interface or boundary elements in $\cM_\ell$ do not satisfy Assumption (H2),  set $S=\{K'\in \cT_\ell:K'\subset K, \, \eta_{K} >\eta_0 \, \text{ for some } K\in \cM_\ell\}$, release all merged elements in $\mathfrak{C}$ or $\mathfrak{C}'$, refine elements in $S$ by quad refinements, {\bf GOTO} (1).

\noindent (3) Compute the unfitted finite element solution $U_\ell$ satisfying \eqref{a2} on the mesh $\cM_\ell$.

\noindent (4) Compute the a posteriori error indicators $\xi_K\ \forall K\in\cM_\ell$ and the global (a posteriori) error estimator $\mathscr{E}(U_\ell,\cM_\ell)=(\sum_{K\in \cM_\ell}\xi_K^2)^{1/2}$.

\noindent (5) If $\frac{\mathscr{E}(U_\ell,\cM_\ell)}{\mathscr{E}(U_0,\cM_0)} <TOL$ then  {\bf STOP}.

\noindent (6) Mark the elements in $\widehat{\cM}_\ell\subset \cM_\ell$ such that
\ben
\left(\sum_{K\in \widehat{\cM_\ell} }\xi_K^2\right)^{1/2}\geq \gamma \mathscr{E}(U_\ell,\cM_\ell).
\een

\noindent (7) Refine the elements in $\widehat{\cT}_\ell=\{K\in\cT_\ell: \overline{K}\cap \overline{K'} \neq \emptyset, \, K'\in \widehat{\cM}_\ell\}$ to obtain $\cT_{\ell+1}$.

\noindent (8) Set $\ell=\ell+1$, and {\bf GOTO} (1).

\vspace{-0.2cm}
\noindent\rule{\textwidth}{0.35mm}

In the following examples, we set $\eta_0=0.05$ and $\gamma=0.5$, unless otherwise stated. The algorithms are implemented in MATLAB on a workstation with Intel(R) Core(TM) i9-10885H CPU 2.40GHz and 64GB memory.

\begin{exmp}
Let $\Omega=\{(x_1,x_2):(\xi-\frac 12)^2+\eta^2 <1, (\xi+\frac 12)^2 +\eta ^2 <1\}$, where $\xi=\cos(\theta)x_1+\sin(\theta)x_2$, $\eta=-\sin(\theta)x+\cos(\theta)y$, and $\theta=\frac{2\pi}{5}$. The boundary of the domain has two singular points at $A=(-\frac{\sqrt{3}}{2}\sin(\theta), \frac{\sqrt{3}}{2}\cos(\theta))$, $B=(\frac{\sqrt{3}}{2}\sin(\theta), -\frac{\sqrt{3}}{2}\cos(\theta))$, see Fig.\ref{fig:4.1}(a). Let the coefficient $a=1$ in $\Om$, the source $f$ and boundary condition $g$ be given such that the exact solution is
\ben
u_1(x_1,x_2)=\frac 1{\sqrt{2}}\Re[(z^2+3/4)^{1/2}],\ \ \mbox{or}\ \
u_2(x_1,x_2)=\frac 1{\sqrt{2}}\Re[(z^2+3/4)^{1/2}]+\cos(20 x_1x_2),
\een
where $z=\xi+{\bf i}\eta$ and $\Re[w]$ denotes the real part of $w$. We choose the negative half real line in the complex plane as the branch cut of $z^{1/2}$. Notice that $(z^2+3/4)^{1/2}$ is analytic in $\Om$ but has singularities at $A,B$. Thus $\Re[(z^2+3/4)^{1/2}]$ is harmonic in $\Om$ but is singular at $A,B$.
\end{exmp}

Fig.\ref{comp_domain_exmp1_2}(a) shows the decay of the error $\| u_1-U_\ell\|_{\rm DG}$ for $p=1,2,3,4,5$, which is faster than the quasi-optimal decay $C N_{\ell}^{-\frac{p}{2}}$ for high-order methods. Here $N_\ell$ is the number of the degrees of freedom (\#DoFs) of the mesh $\cM_\ell$. This is because the exact solution $u_1$ is very smooth except at $A$, $B$. The error of high-order methods in the smooth region is much smaller than the error in the region near $A, B$. The adaptive algorithm always refines elements close to $A$, $B$. The asymptotic behavior of the quasi-optimal decay of the high-order methods has not been reached before the code stops when the element size near the singularity is less than $10^{-9}$. However,
Fig.\ref{comp_domain_exmp1_3}\,(a) shows the quasi-optimal decay of the error $\|u_2-U_\ell\|_{\rm DG}\approx CN_\ell^{-p/2}$ when the solution $u_2$ has high frequency structures.  Table \ref{tab_err_dofs_ex1} shows clearly that the error of high-order methods is much smaller than the low-order method when the \#DoFs are almost the same. Fig.\ref{comp_mesh} shows the computational meshes for $p=1,3,5$ with almost the same \#DoFs.

Fig.\ref{comp_domain_exmp1_2}(b) and Fig.\ref{comp_domain_exmp1_3}(b) depict the effectivity index
\ben
I_{\rm eff}=\mathscr{E}(U_\ell,\cM_{\ell})/\|u-U_\ell\|_{\rm DG},
\een
which ranges in $(3.5, 7.5)$. It shows that our a posteriori error estimates can effectively estimate the errors of the numerical solutions.

\begin{figure}[!ht]
\centering
\subfigure[]{\includegraphics[width=0.32\textwidth]{./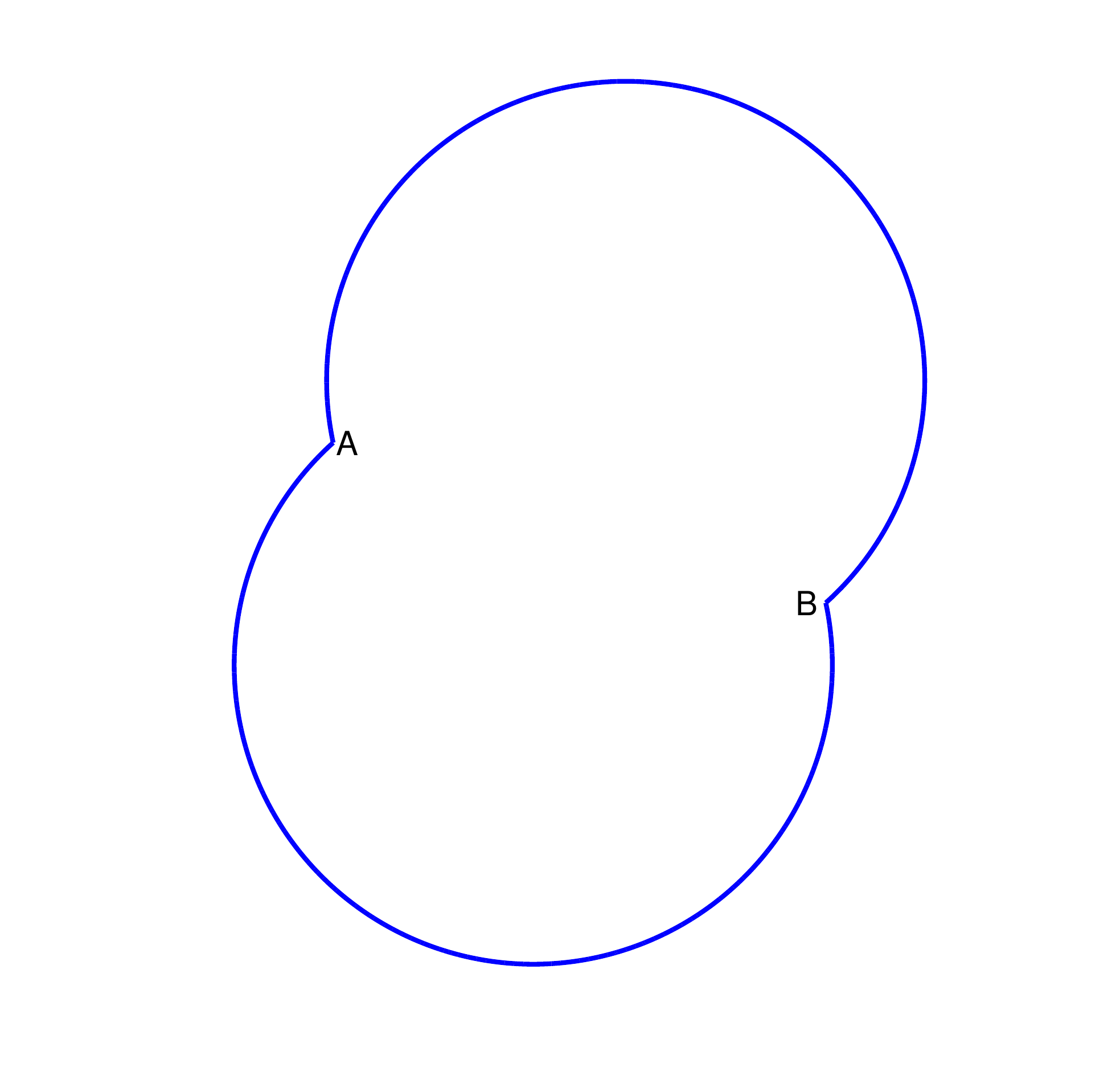}}
\subfigure[]{\includegraphics[width=0.4\textwidth]{./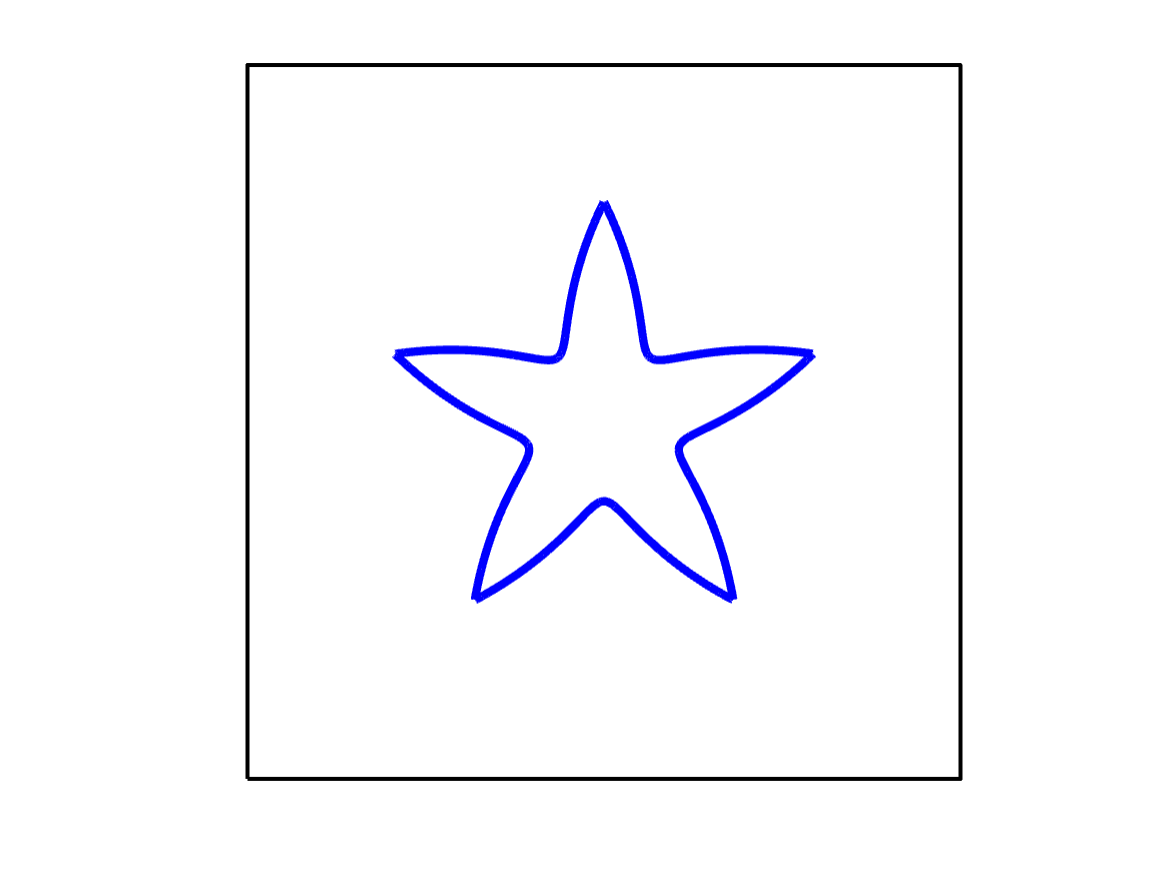}}
\caption{The computational domain of Example 1 (left) and Example 2 (right).}\label{fig:4.1}
\end{figure}

\begin{figure}[!ht]
\centering
\subfigure[]{\includegraphics[width=0.4\textwidth]{./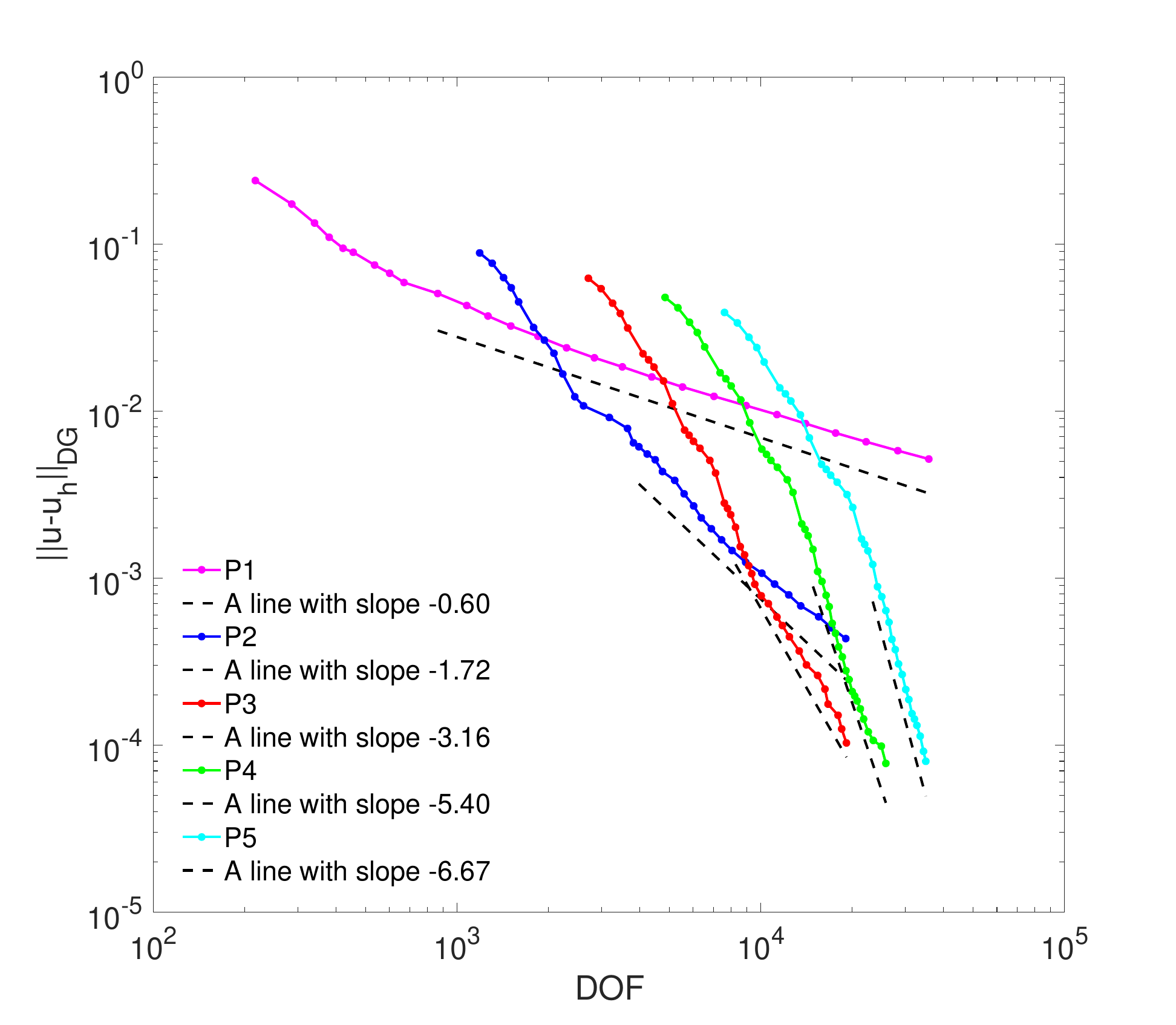}}
\subfigure[]{\includegraphics[width=0.45\textwidth]{./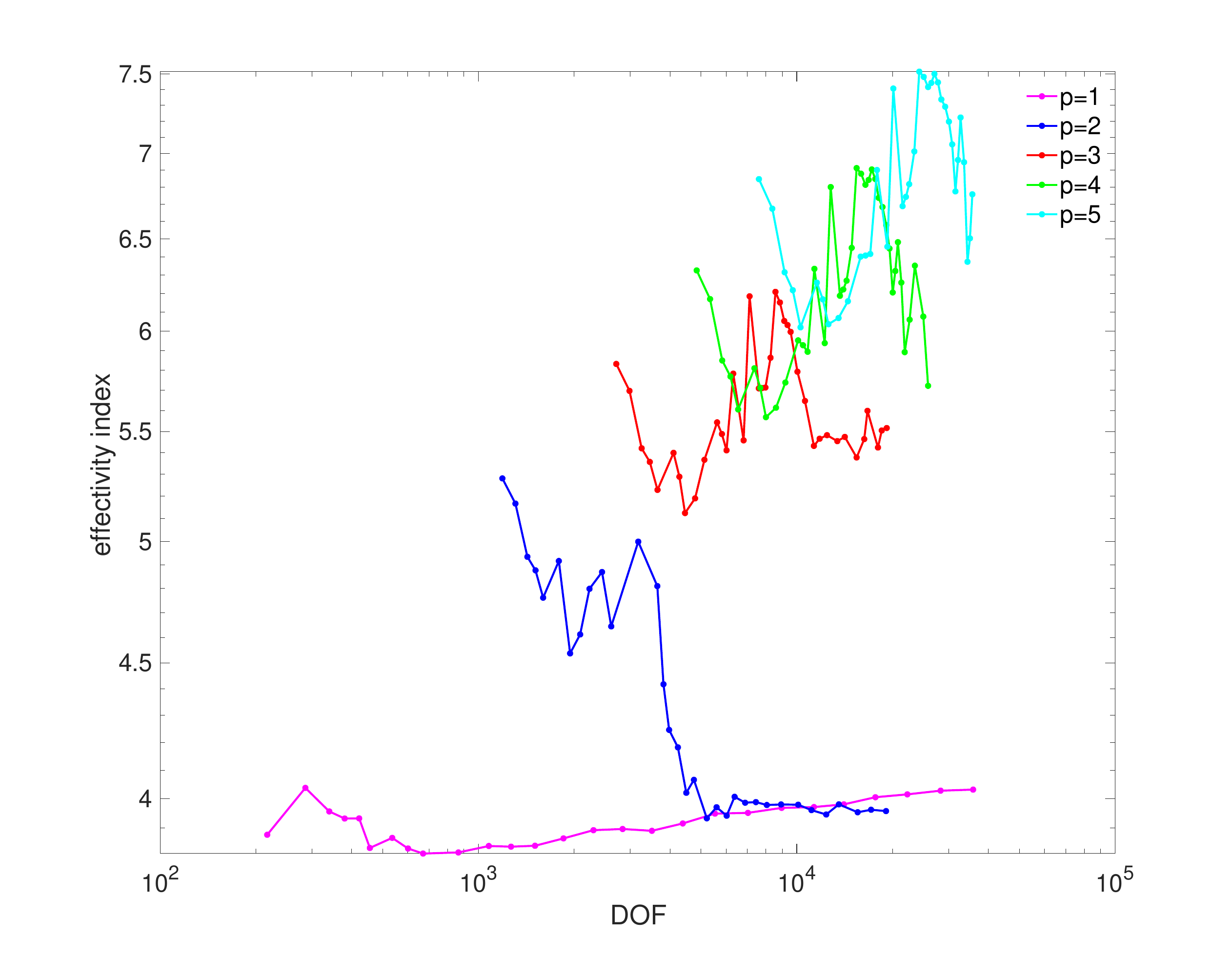}}
\caption{\label{comp_domain_exmp1_2}  Example 1: The exact solution is $u_1$. The quasi-optimal decay of the error of the adaptive solutions for $p=1,2,3,4,5$ (left) and the effectivity index against \#DoFs for $p=1,2,3,4,5$ (right).}
\end{figure}

\begin{figure}[!ht]
\centering
\subfigure[]{\includegraphics[width=0.4\textwidth]{./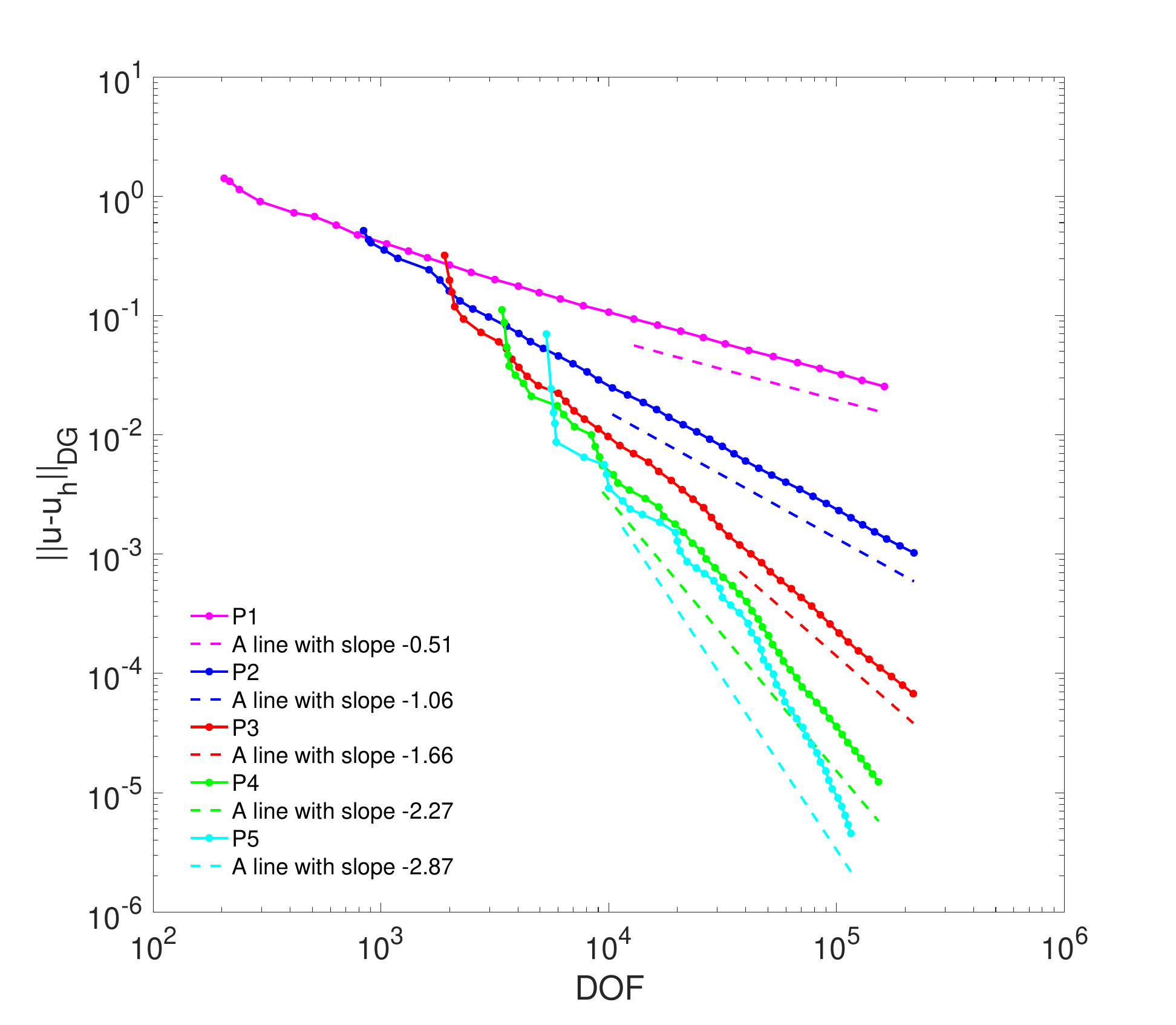}}
\subfigure[]{\includegraphics[width=0.45\textwidth]{./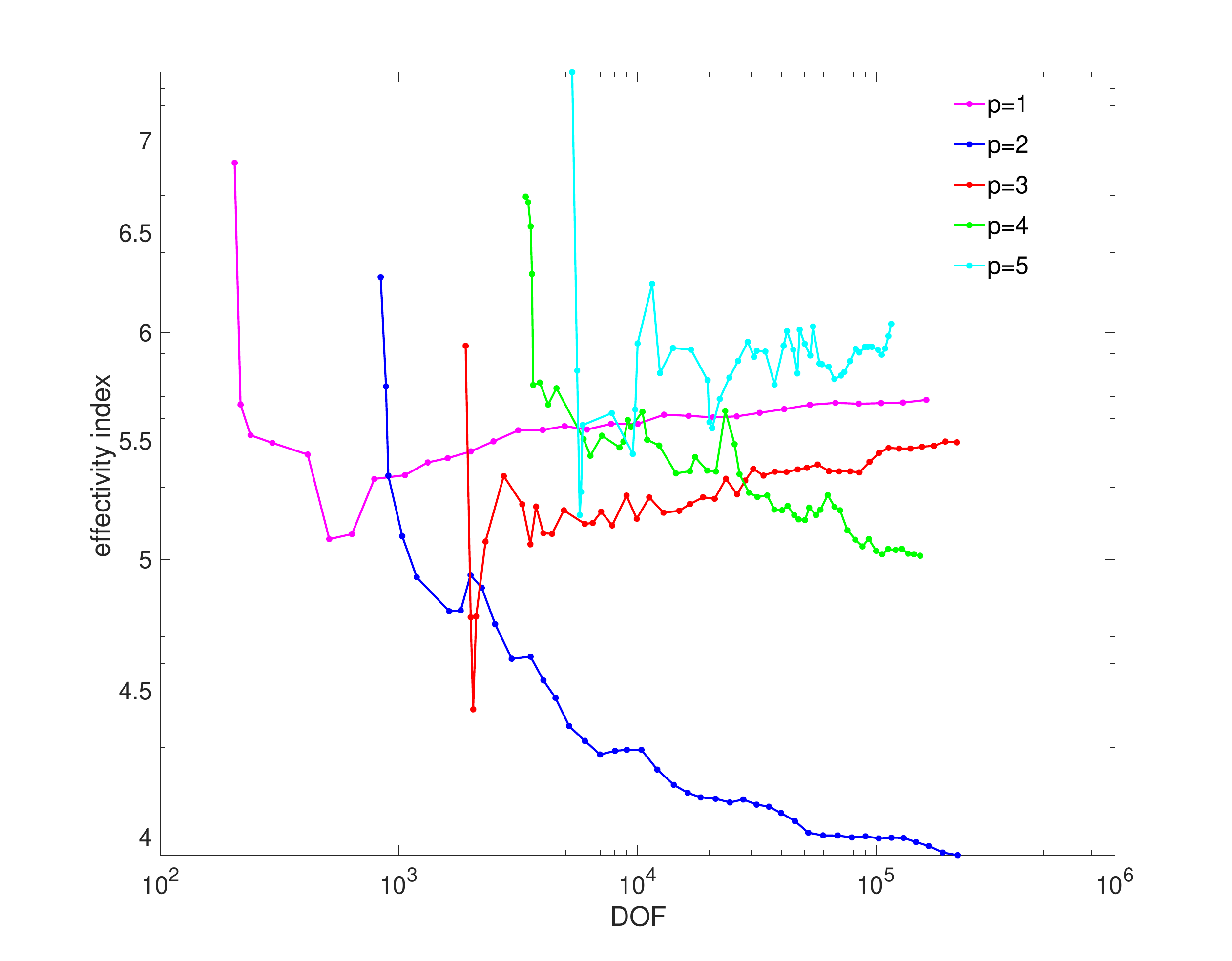}}
\caption{Example 1: The exact solution is $u_2$. The quasi-optimal decay of the error of the adaptive solutions for $p=1,2,3,4,5$ (left) and the effectivity index against \#DoFs for $p=1,2,3,4,5$ (right).}\label{comp_domain_exmp1_3}
\end{figure}

\begin{figure}[!ht]
\centering
\subfigure[]{\includegraphics[width=0.3\textwidth]{./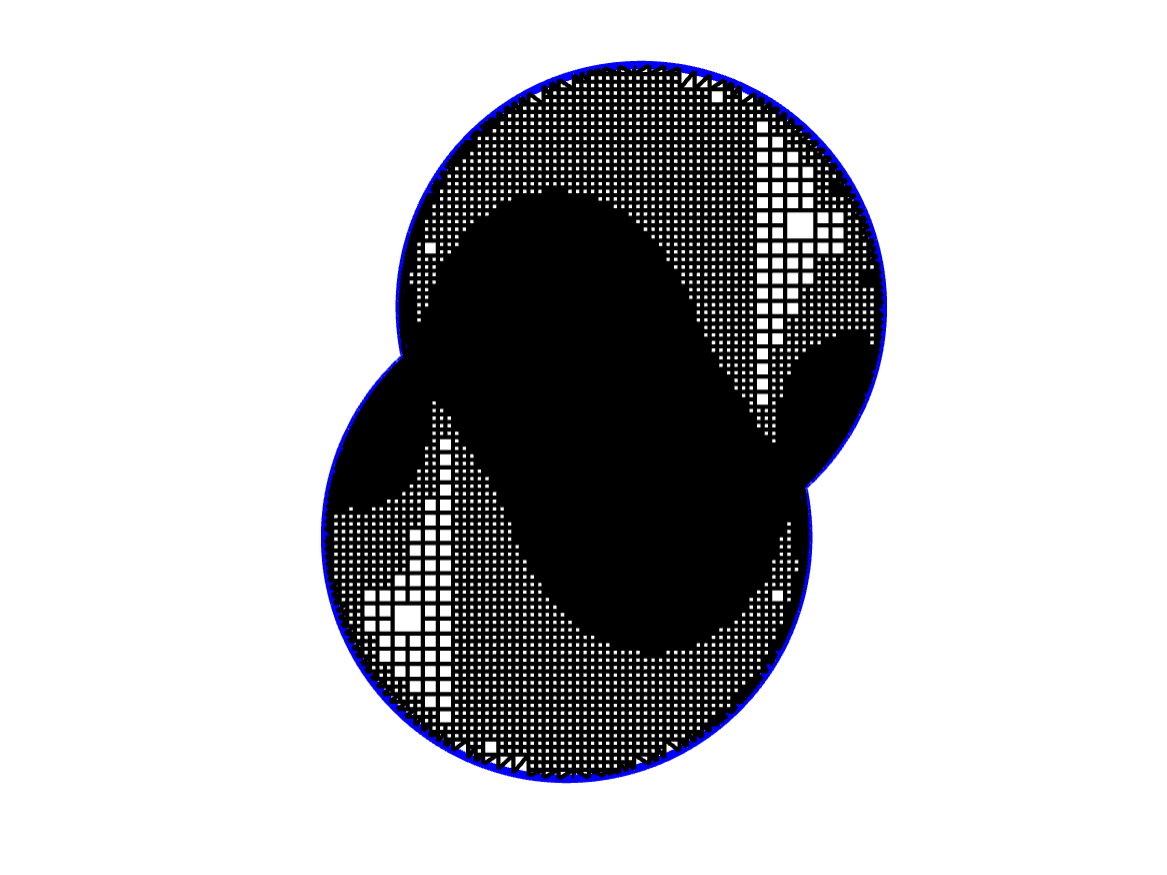}}
\subfigure[]{\includegraphics[width=0.3\textwidth]{./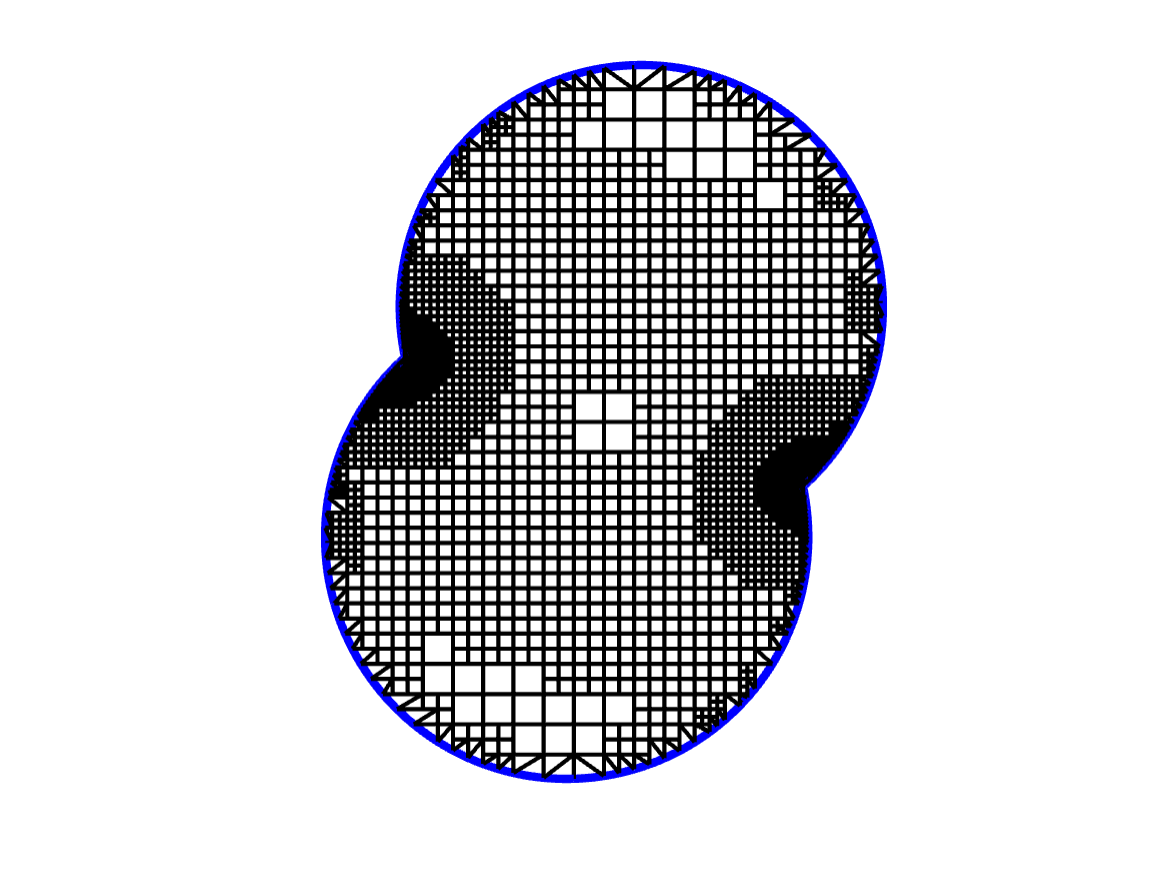}}
\subfigure[]{\includegraphics[width=0.3\textwidth]{./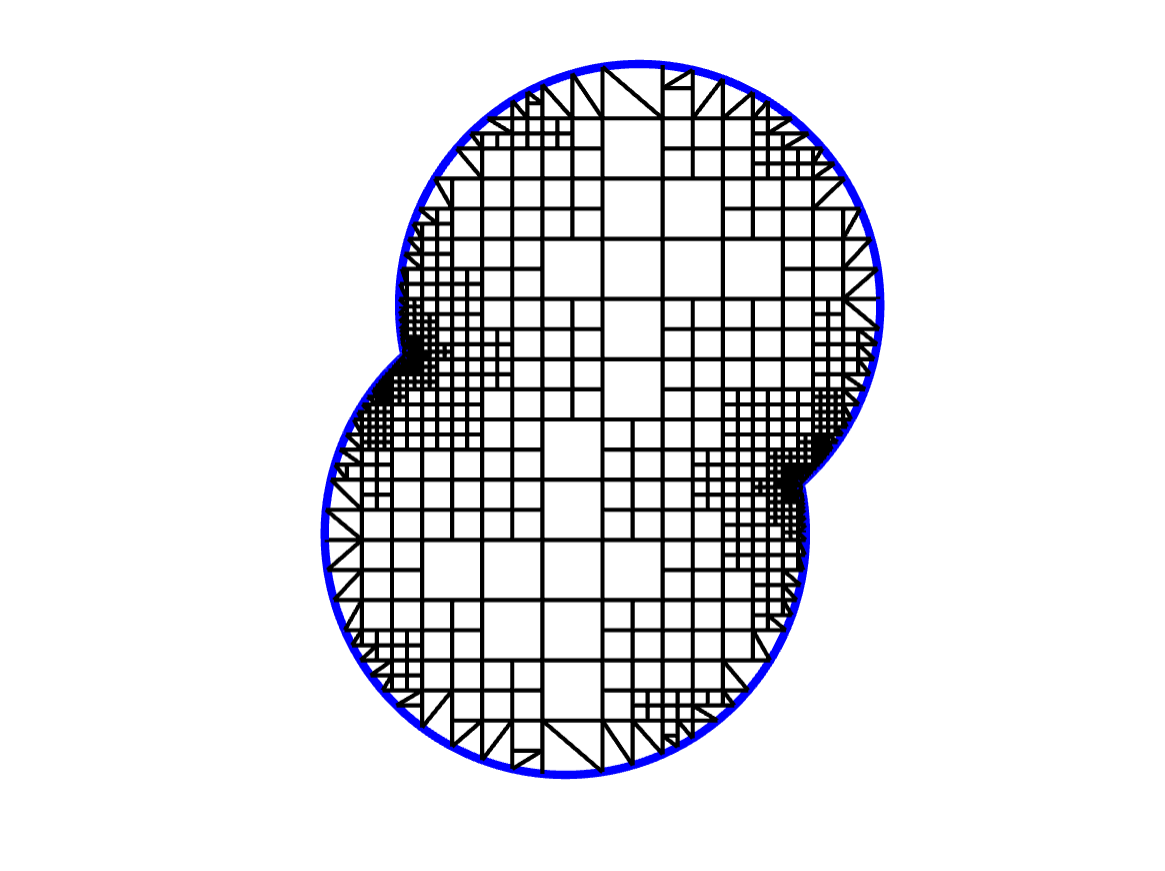}}
\caption{Example 1: The adaptive meshes for $p=1,2,3$ when the exact solution is $u_1$.} \label{comp_mesh}
\end{figure}

\begin{table}[!ht]\centering	
\begin{tabular}{|c|c|c|c|}
  \hline
  &$p=1$& $p=3$& $p=5$ \\ \hline
  $\#$DoFs& 52757& 51205 & 51891  \\ \hline
  Error & 4.51E-02 & 7.12E-03 & 1.29E-05  \\ \hline
 \end{tabular}
\caption{Example 1: The error of the numerical solutions and $\#$DoFs when the exact solution is $u_2$.}\label{tab_err_dofs_ex1}
\end{table}

\begin{exmp}
We consider a five-star shaped singular interface problem, see Fig.\ref{fig:4.1}\,(right). Let $\Omega_1=\{(r\cos(\theta),r\sin(\theta)): r<q(\theta), \, \theta\in [\frac{\pi}{10},\frac{21\pi}{10})\}$ and $\Om_2=(-2,2)^2\backslash\bar\Om_1$, where
\begin{align*}
q(\theta)=2\left(\theta-\frac{3\pi}{10}-\frac{2\pi}{5}j\right)^2+\frac{4}{9}, \quad \theta \in \left[\frac{\pi}{10}+\frac{2\pi}{5}j,\frac{\pi}{2}+\frac{2\pi}{5}j\right), \quad j=0,1,2,3,4.
\end{align*}
The coefficient $a$ in \eqref{m1}-\eqref{m2} is taken as $a_1=10$, $a_2=1$. We take the source $f_1=1$, $f_2=\cos(20x_1x_2)$, and boundary condition $g=0$.
\end{exmp}

\begin{figure}[!ht]
\centering
{\includegraphics[width=0.45\textwidth]{./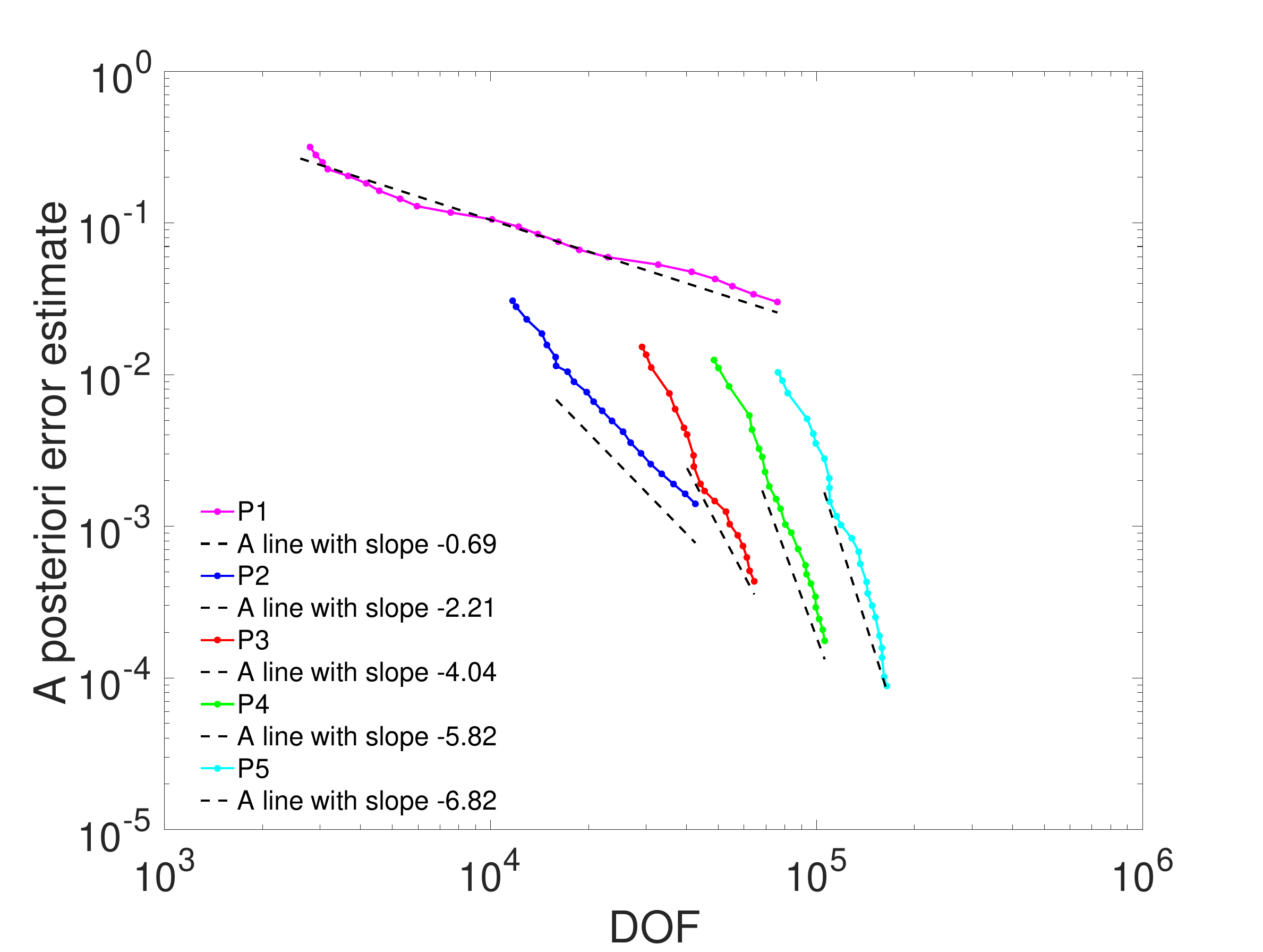}}
{\includegraphics[width=0.45\textwidth]{./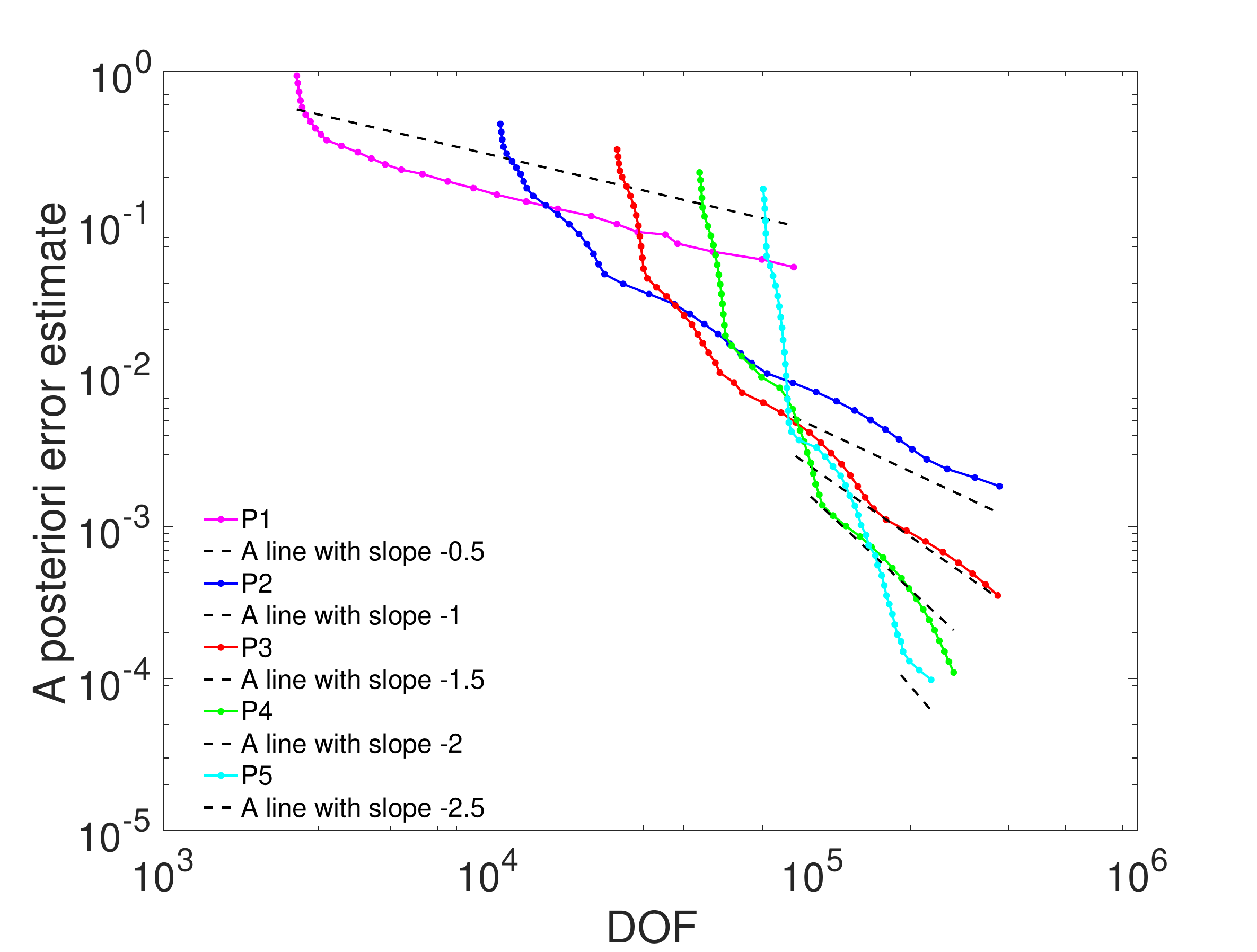}}
\caption{Example 2: The quasi-optimal decay of the error of the adaptive solutions for $p=1,2,3,4,5$ with $f_1=1$ (left) and $f_2=\cos(20x_1x_2)$ (right).}\label{comp_example2_five_star}
\end{figure}
\begin{figure}[!ht]
\centering
\subfigure[]{\includegraphics[width=0.3\textwidth]{./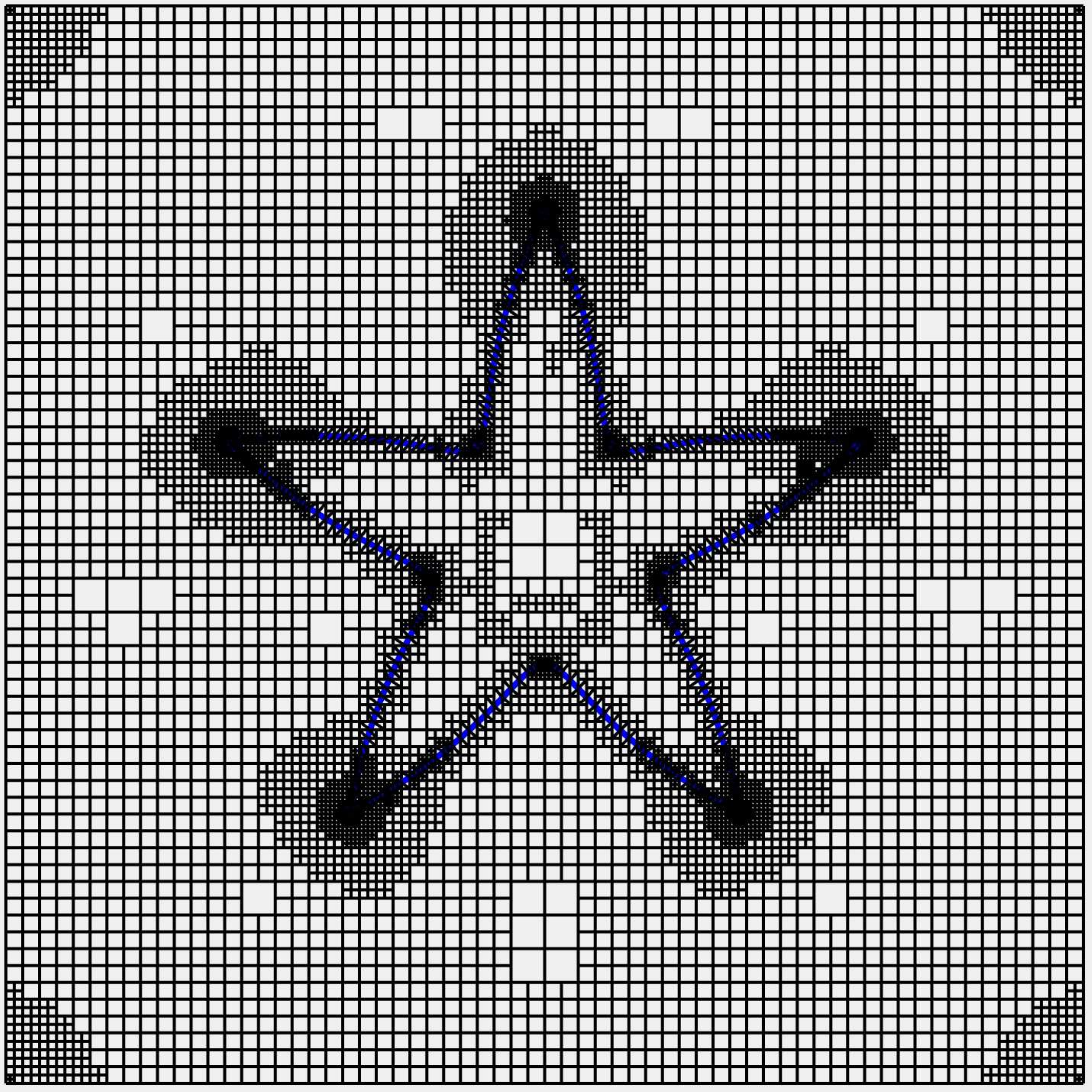}}
\subfigure[]{\includegraphics[width=0.3\textwidth]{./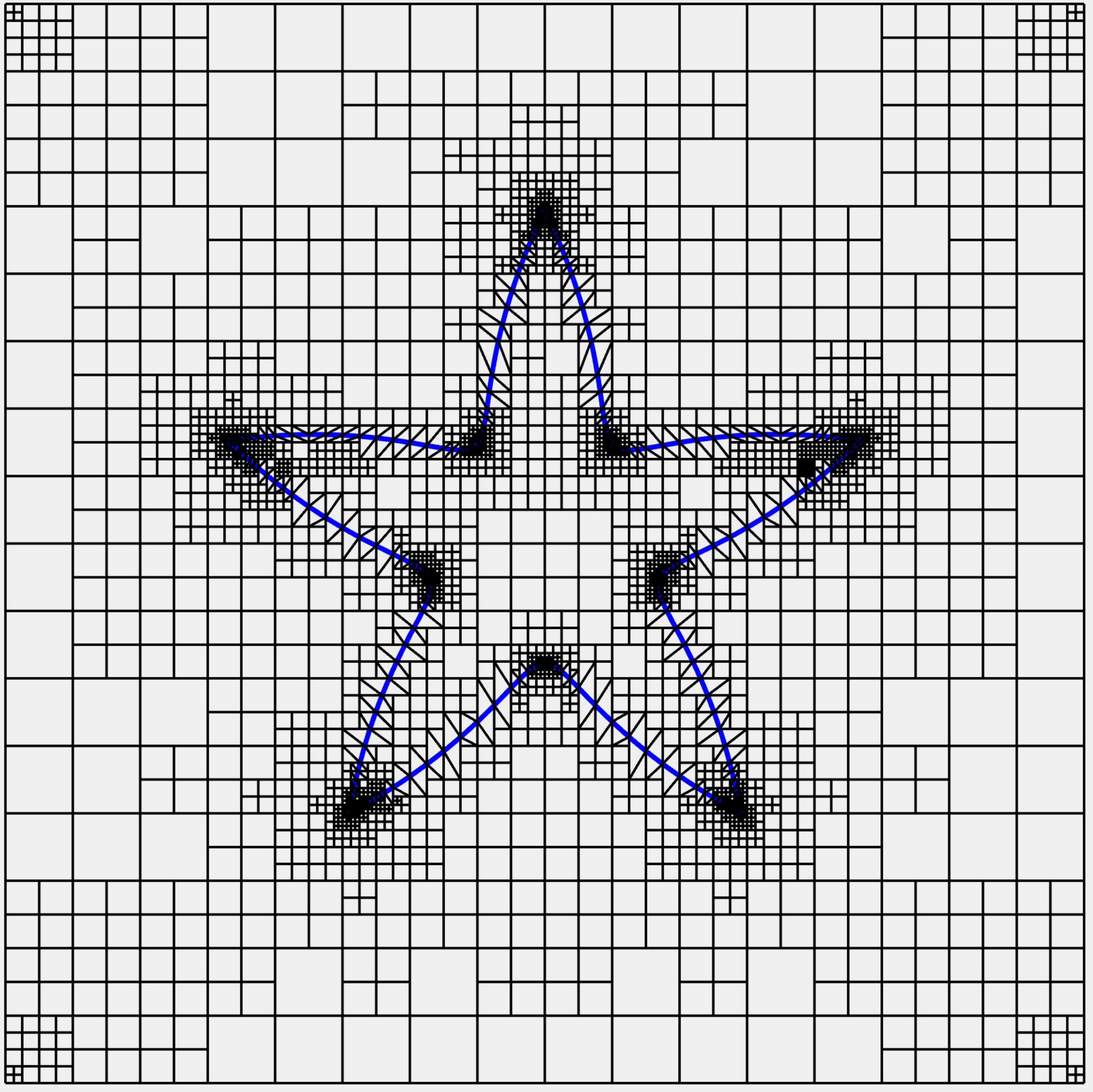}}
\subfigure[]{\includegraphics[width=0.3\textwidth]{./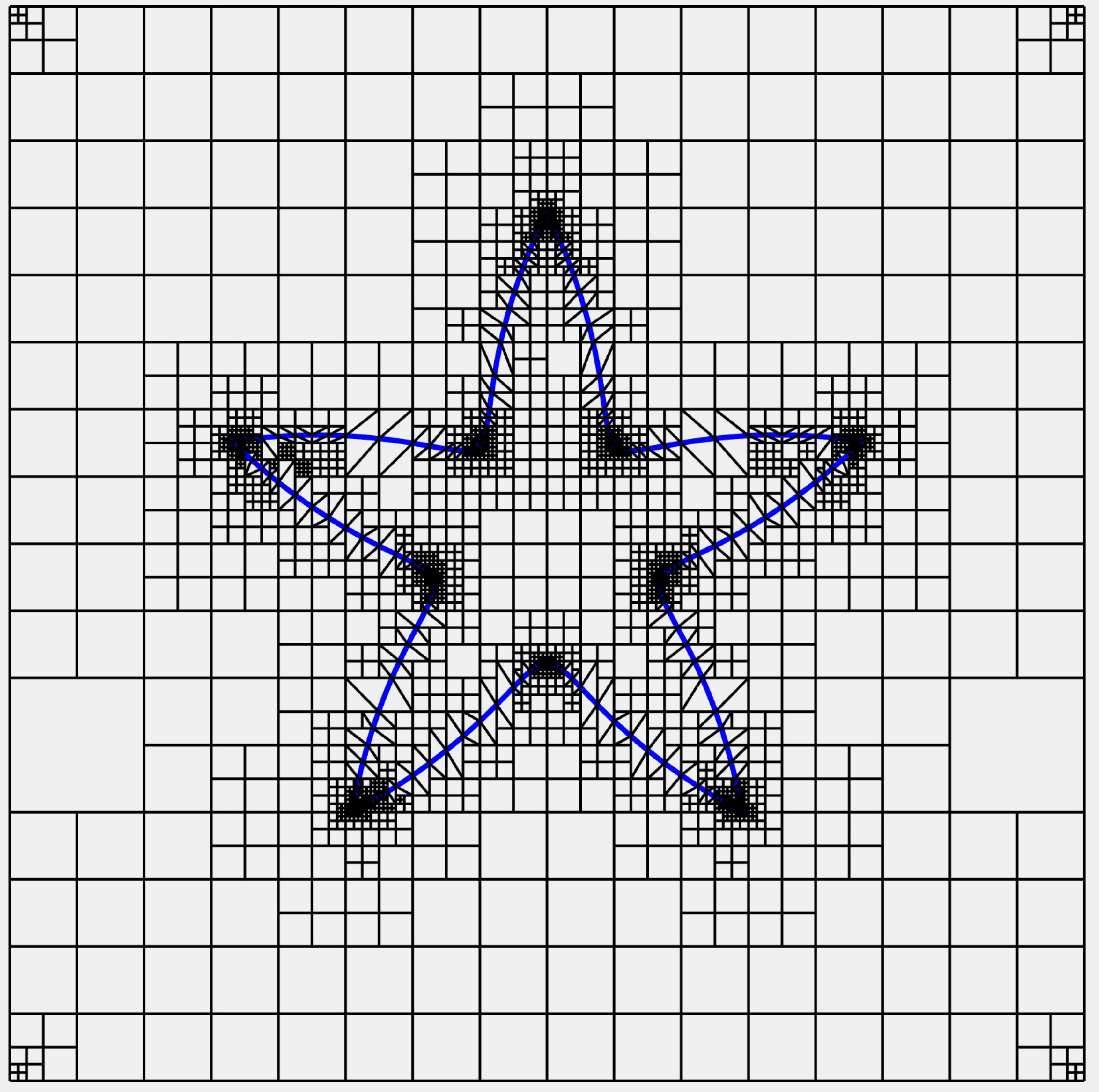}}
\caption{Example 2: The adaptive meshes for $p=2,3,4$ when $f_1=1$.} \label{comp_mesh_ex2}
\end{figure}

Fig.\ref{comp_example2_five_star} shows the quasi-optimal convergence the adaptive solutions when $f_1=1$ (left) and $f_2=\cos(20x_1x_2)$ (right). We again observe the decay is faster than the quasi-optimal decay of the high-order methods $p=2,3,4,5$, when $f_1=1$. On the other hand, we observe the quasi-optimal convergence rate of the adaptive solutions when $f_2=\cos(20x_1x_2)$ for $p=1,2,3,4,5$.

Fig.\ref{comp_mesh_ex2} shows the adaptive meshes for $p=2,3,4$. We observe that the meshes are mainly refined around the sharp corners where the solution is singular.  Table \ref{tab_err_dofs_ex2} shows the error of the high-order methods is much smaller than the low-order method when the \#DoFs are almost the same.

\begin{table}[!ht]\centering	
\begin{tabular}{|c|c|c|c|}
  \hline
  &$p=1$& $p=3$& $p=5$ \\ \hline
  $\#$DoFs& 253604 & 251406 & 231231  \\ \hline
  Error & 1.87E-02 & 8.00E-04 & 9.81E-05  \\ \hline
 \end{tabular}
\caption{Example 2: The error of the numerical solutions and $\#$DoFs when $f_2=\cos(20x_1x_2)$.}\label{tab_err_dofs_ex2}
\end{table}

\begin{exmp}\label{example3}
In this example, as an application of the adaptive high-order unfitted finite element method developed in this paper, we consider the influence of the geometric modeling error to the solution of elliptic problems. For $i=1,2$, let $u_i\in H^1(D_i)$ be the solution of the problem
\begin{align*}
-\Delta u_i&=1 \quad {\rm in} \, \, D_i,\quad u=0 \quad {\rm on}\, \, \partial D_i,
\end{align*}
where for $b_1=1/2, b_2=\sqrt 3/2$,
\begin{align*}
&D_1=\{(x_1,x_2)\in (-2,2)^2: \alpha |b_1 x_1 +b_2x_2|+\beta |-b_2 x_1+b_1 x_2|>1\},\\
&D_2=\{(x_1,x_2)\in (-2,2)^2: \alpha\sqrt{|b_1x_1 +b_2x_2|^2+\epsilon}+\beta\sqrt{|-b_2 x_1+b_1 x_2|^2+\epsilon}>1\}.
\end{align*}
We have $D_1\subset D_2$, see Fig.\ref{computational_domain_exmp3} when $\alpha=\sqrt{5}$, $\beta=\sqrt{2/3}$, and $\epsilon=0.001$. For $\epsilon>0$ small, $D_1$ can be viewed as an approximation of the domain $D_2$ with round-shaped corners.
\end{exmp}

The exact solutions of this example are unknown. We use Algorithm 3 to compute the numerical solutions $u_h$ on $D_1$ and $u_{h,\epsilon}$ on $D_2$. The tolerance is set to be $TOL$=1.0E-05 and the finite element approximation order is $p=5$. Fig.\ref{computational_mesh_Omega1}-\ref{computational_mesh_Omega2} depict the computational meshes.

It is easy to see that $|G_1|=|D_2\backslash\bar D_1|=O(\epsilon\log(1/\epsilon))$. The solution $u_1$ has reentrant corner singularities. It follows from the standard argument (see, e.g., Chen and Wu \cite[\S 4.1]{ChenWu}) that $u_1\in W^{1,p_1}(D_1)$, where $p_1=1/(2-(\pi/\theta))$, $\theta=2\pi-2\arctan(\sqrt{3/10})$. The solution $u_2$ is, on the other hand, smooth $u_2\in W^{1,\infty}(D_2)$. By Theorem \ref{thm:5.1}, $\|\na(u_1-u_2)\|_{L^2(D_1)}\le C(\epsilon\log(1/\epsilon))^\sigma$, $|\na(u_1-u_2)(x_*)|\le C(\epsilon\log(1/\epsilon))^{2\sigma}$, where $\sigma=(p_1-2)/(2p_1)\approx 0.297$, and $x_*=(\sqrt 2,\sqrt 2)$ that is away from the boundaries of $D_1$ and $D_2$. Fig.\ref{err_H1_interiori} shows the relative error of the numerical solutions
\begin{align*}
\frac{\|\nabla (u_{h}-u_{h,\epsilon})\|_{L^2(D_1)}}{\|\nabla u_h\|_{L^2(D_1)}}, \quad \frac{|\nabla (u_h-u_{h,\epsilon})(x_\ast)|}{|\nabla u_h(x_\ast)|},
\end{align*}
which clearly confirms our theoretical findings. We observe that the relative energy error in the whole domain $D_1$ is about $15\%$ when $\epsilon=10^{-3}$ but the relative gradient error is rather small away from the corner singularities. This implies that one should be careful if one is interested in the solution $u_2$ close to the tips of its round-shaped corners. The solution $u_1$ at these points may not give accurate approximations of $u_2$ there.

\begin{figure}[!ht]
\centering
\subfigure[]{\includegraphics[width=0.3\textwidth]{./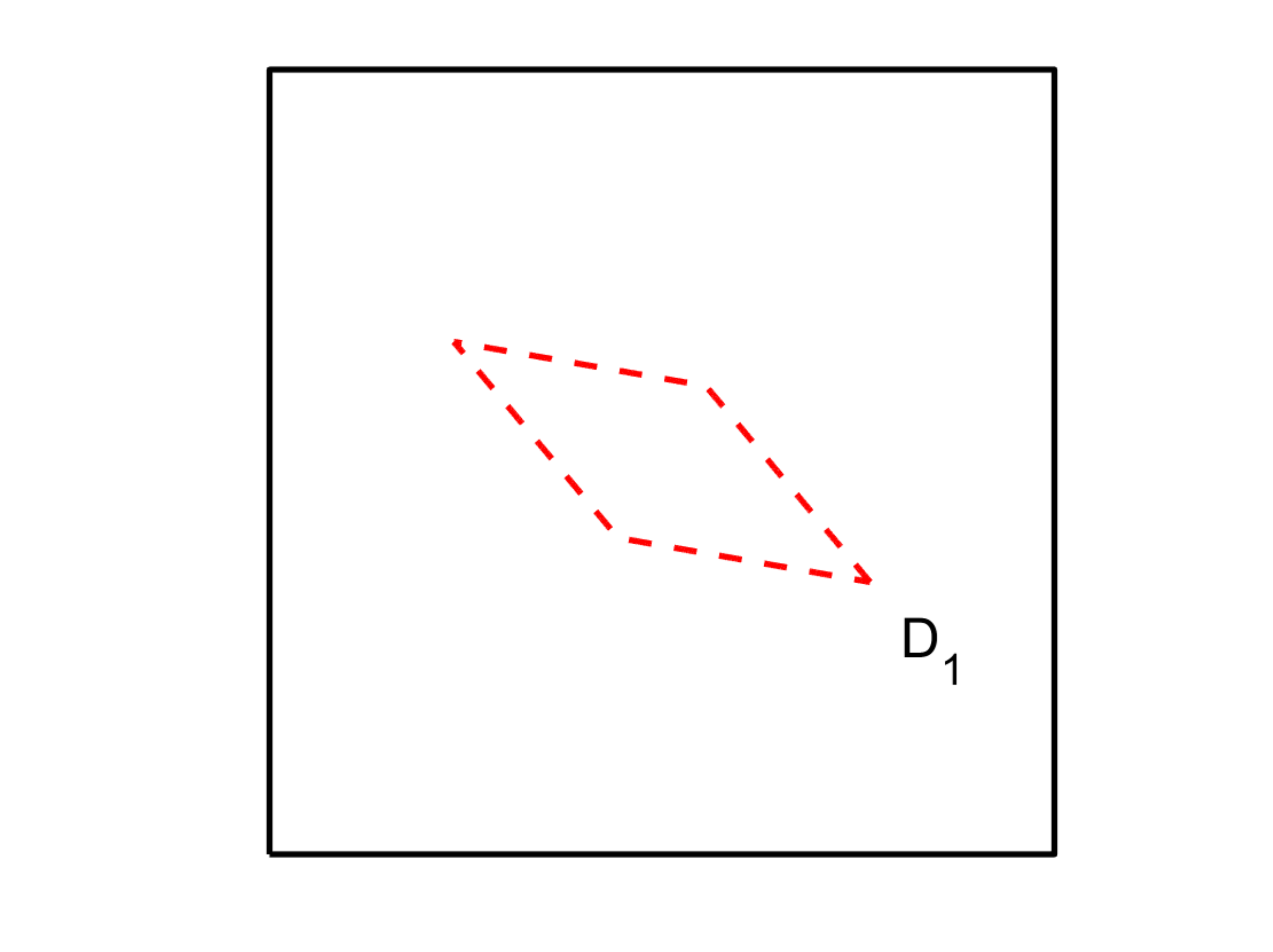}}
\subfigure[]{\includegraphics[width=0.3\textwidth]{./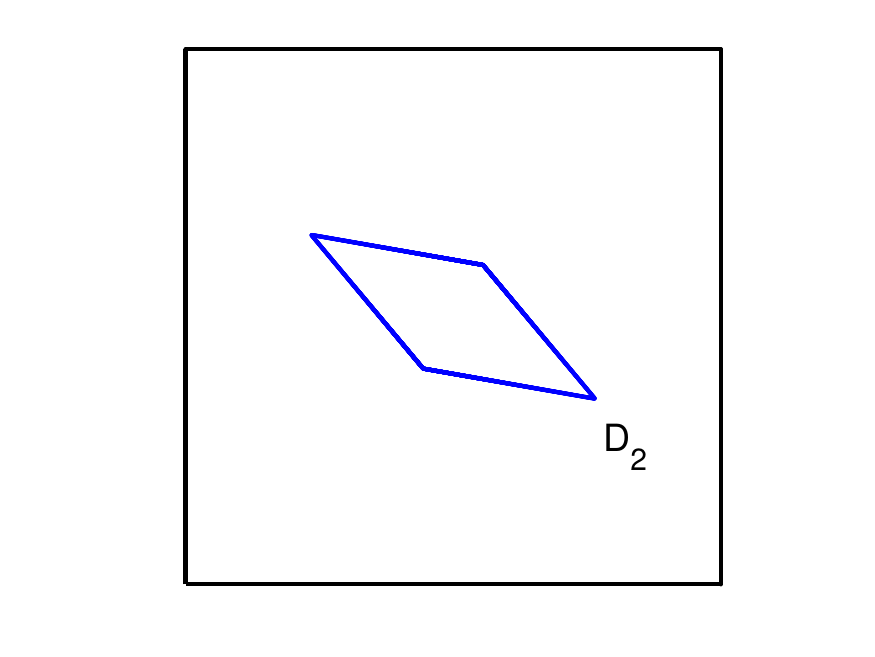}}
\subfigure[]{\includegraphics[width=0.3\textwidth]{./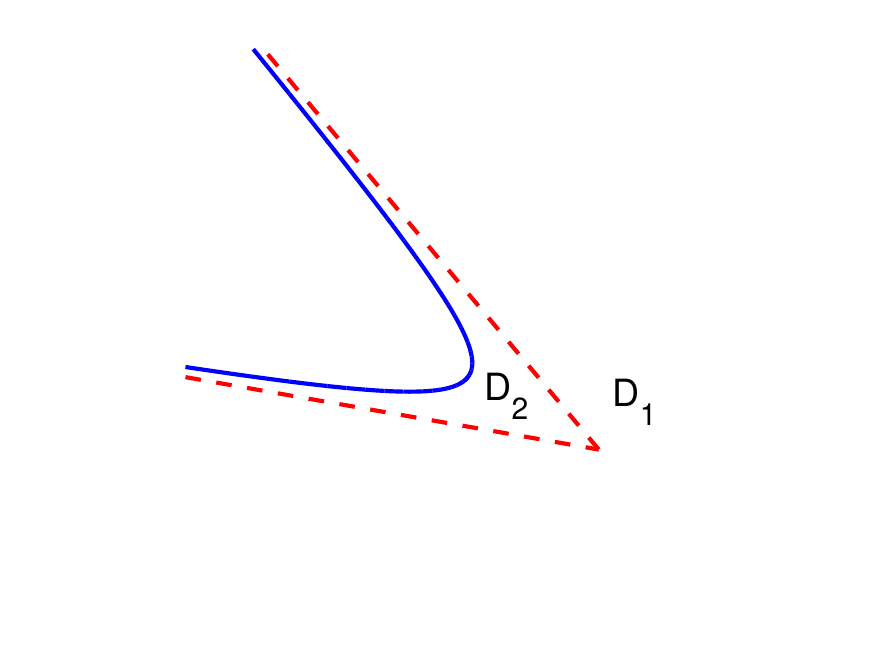}}
\caption{Example 3: The computational domain with the sharp corner $D_1$ (left), with the round shaped corner $D_2$ (middle), and the local domain near the singular point (right).} \label{computational_domain_exmp3}
\end{figure}

\begin{figure}[!ht]
\centering
\subfigure[]{\includegraphics[width=0.3\textwidth]{./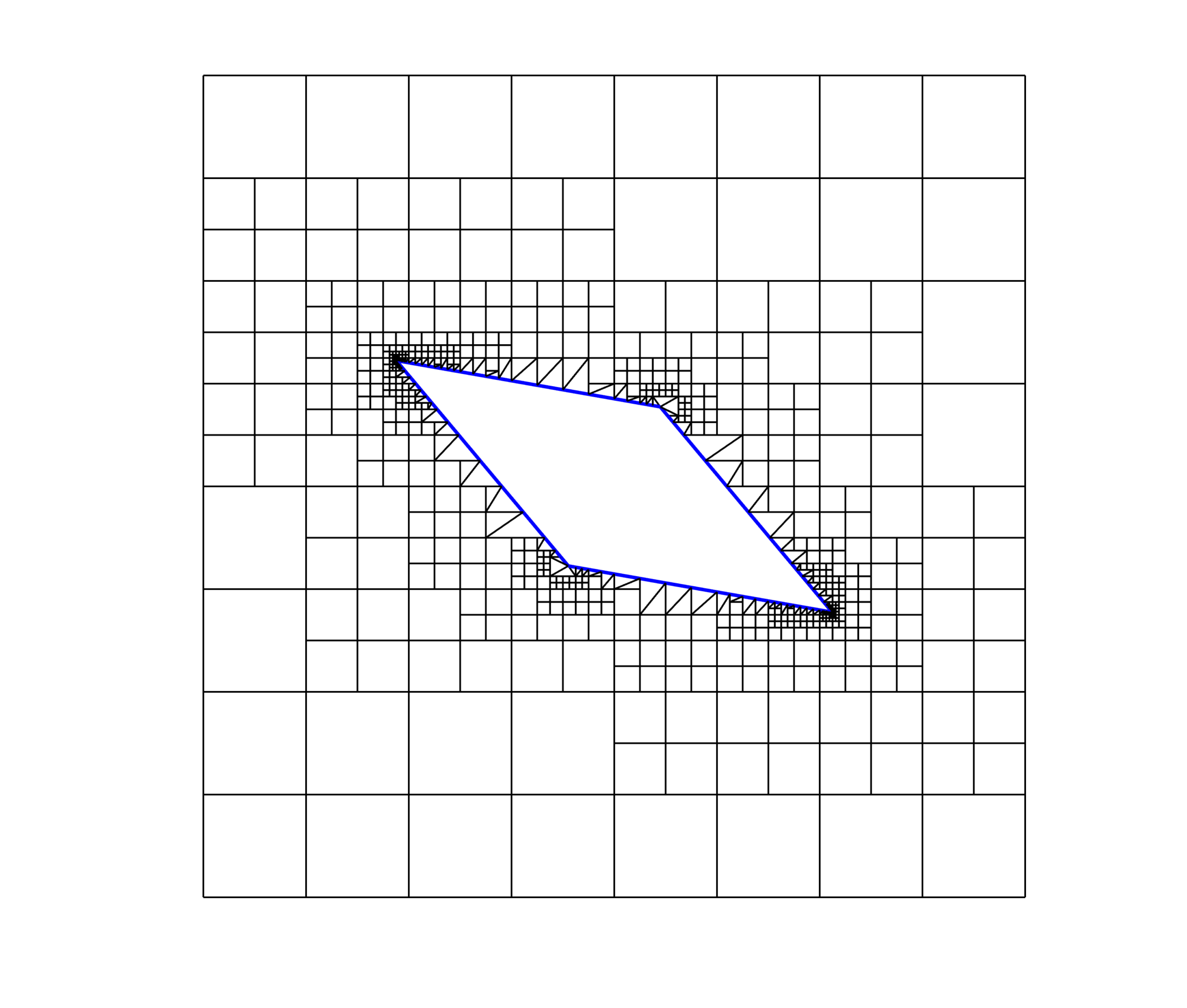}}
\subfigure[]{\includegraphics[width=0.3\textwidth]{./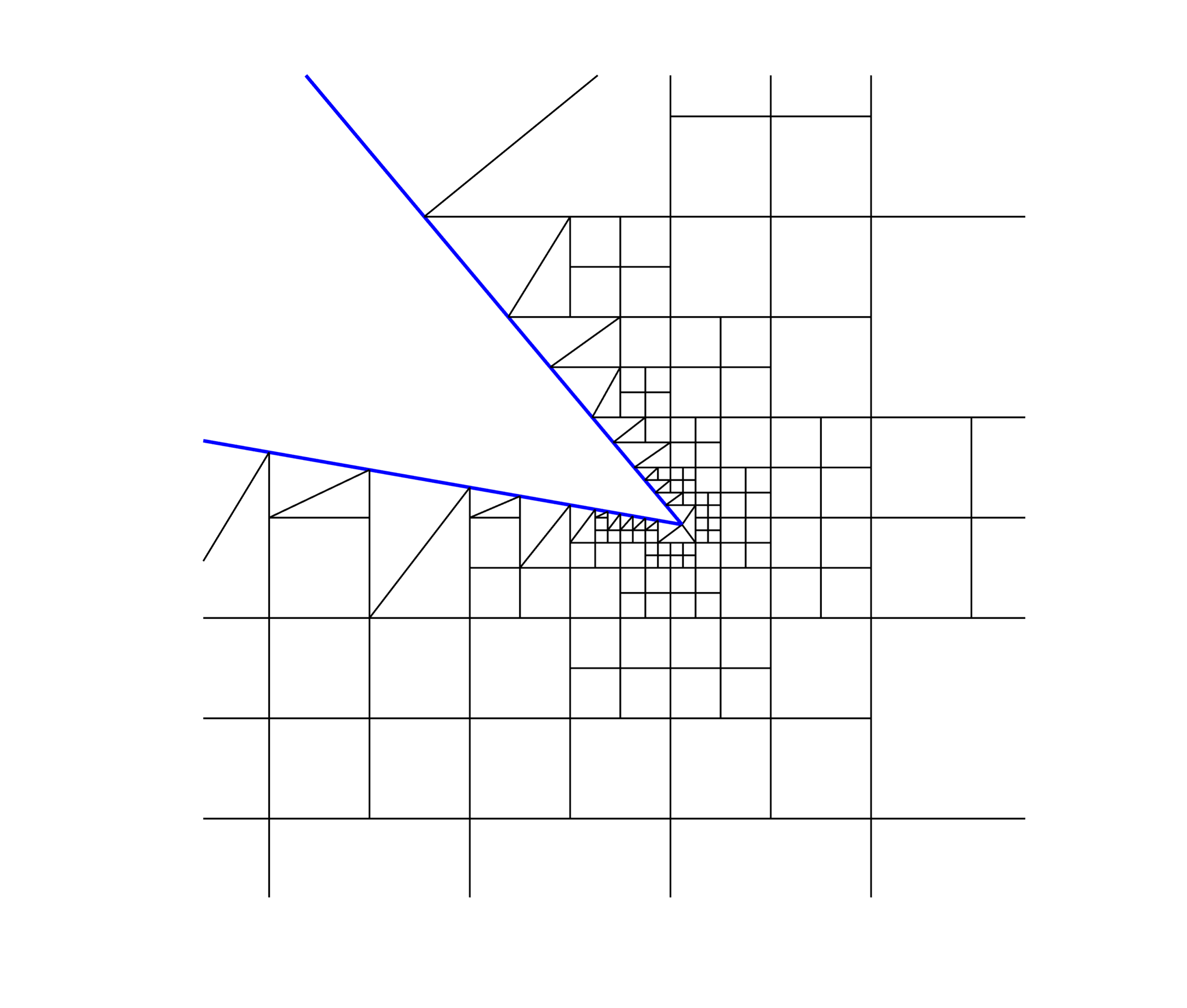}}
\caption{Example 3: The computational mesh in $D_1$ (left) and corresponding local mesh within $(1.056,1.064)\times(-0.616,-0.608)$ (right).} \label{computational_mesh_Omega1}
\end{figure}

\begin{figure}[!ht]
\centering
\subfigure[]{\includegraphics[width=0.3\textwidth]{./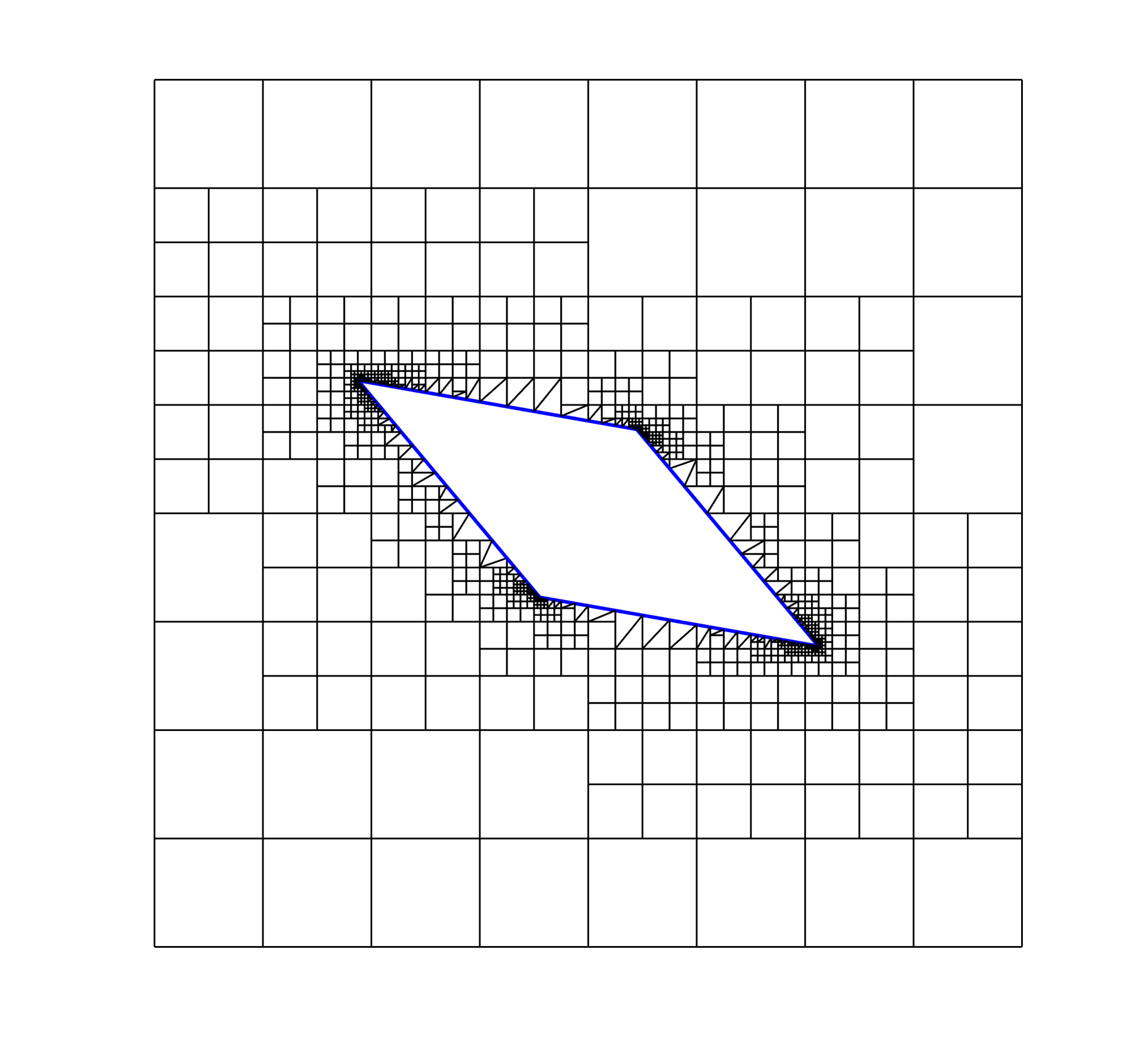}}
\subfigure[]{\includegraphics[width=0.3\textwidth]{./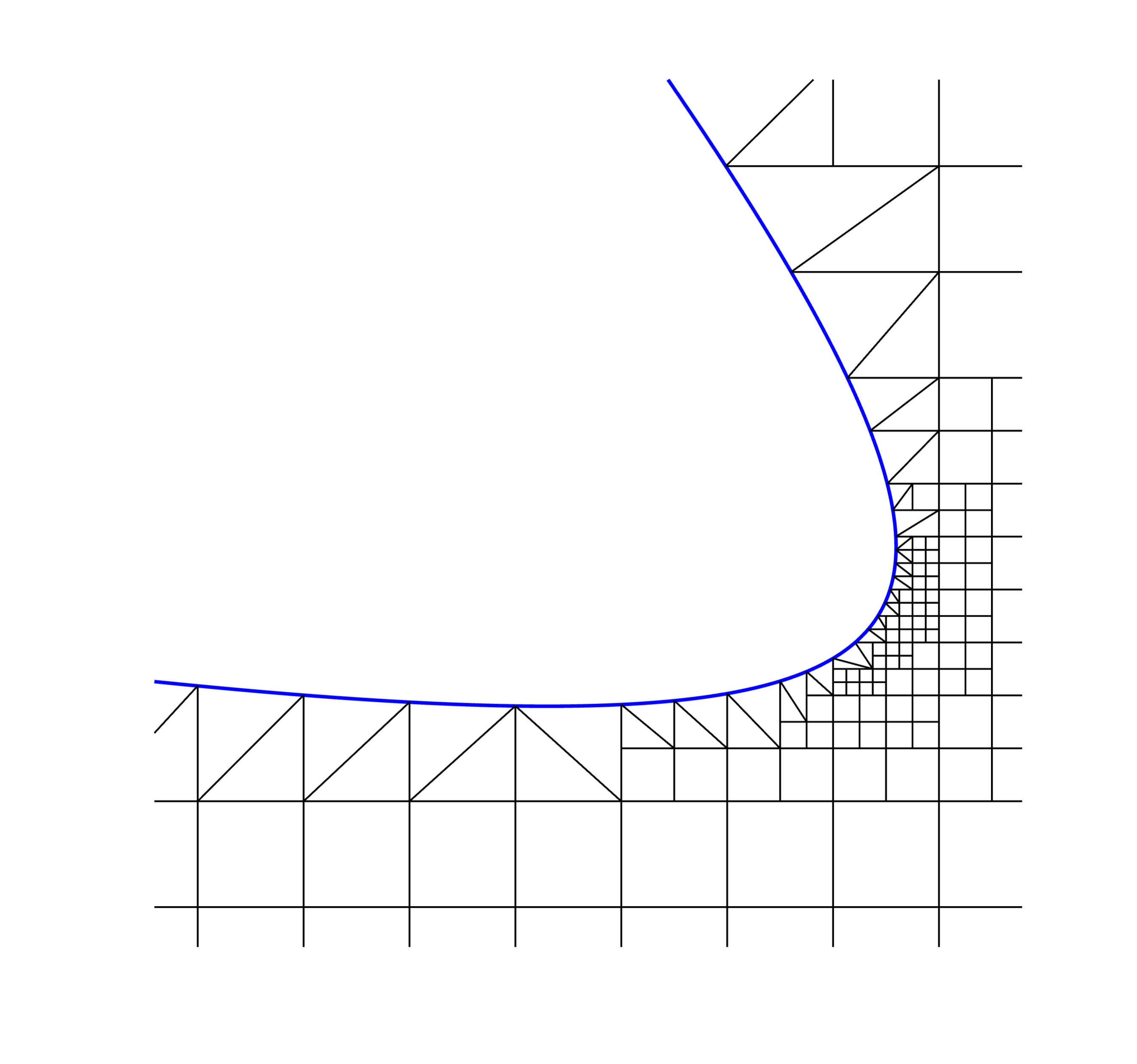}}
\caption{Example 3: The computational mesh in $D_2$ (left) and corresponding local mesh within $(1.054,1.058)\times(-0.612,-0.608)$ (right).} \label{computational_mesh_Omega2}
\end{figure}

\begin{figure}[!ht]
\centering
\subfigure[]{\includegraphics[width=0.44\textwidth]{./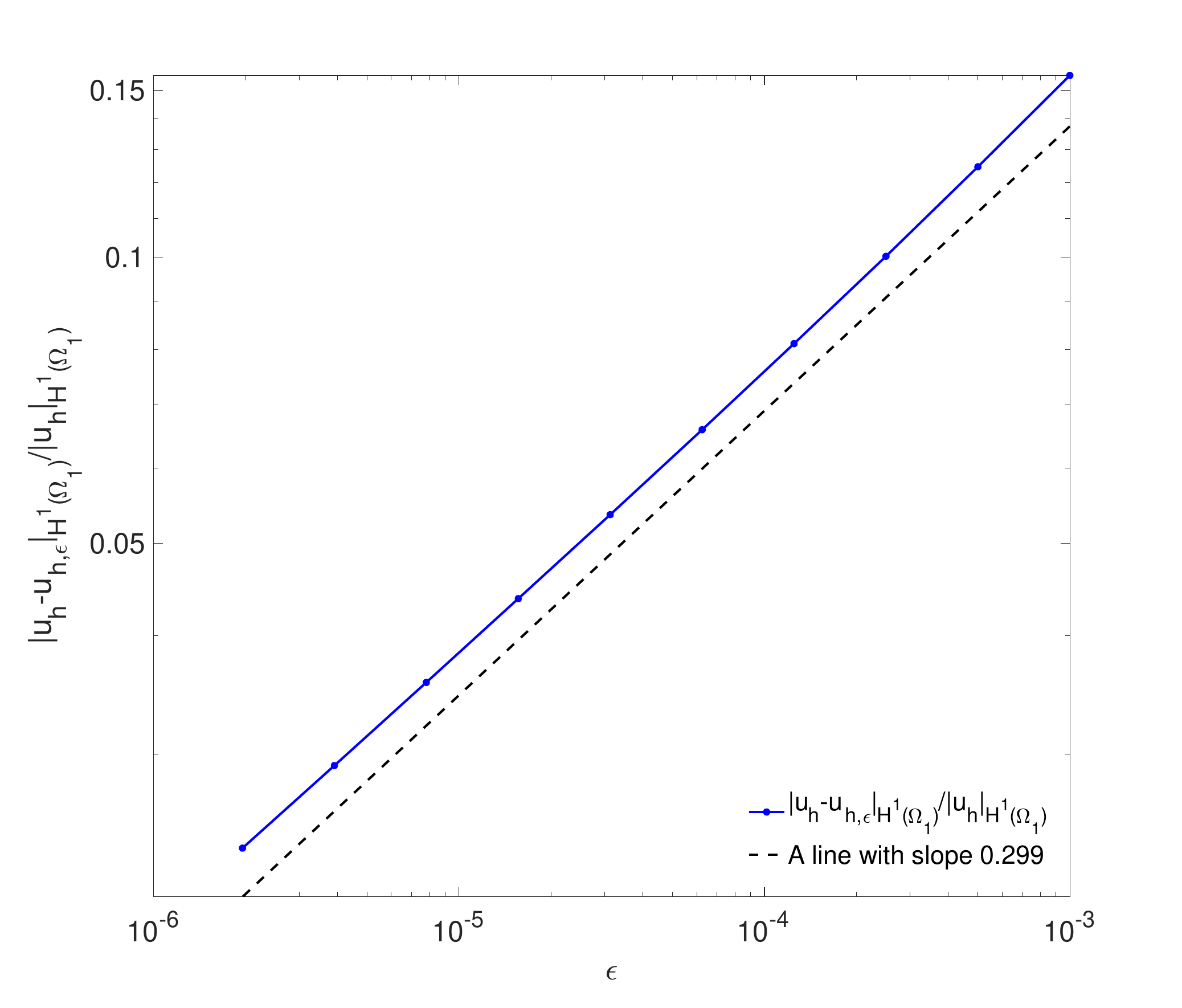}}
\subfigure[]{\includegraphics[width=0.4\textwidth]{./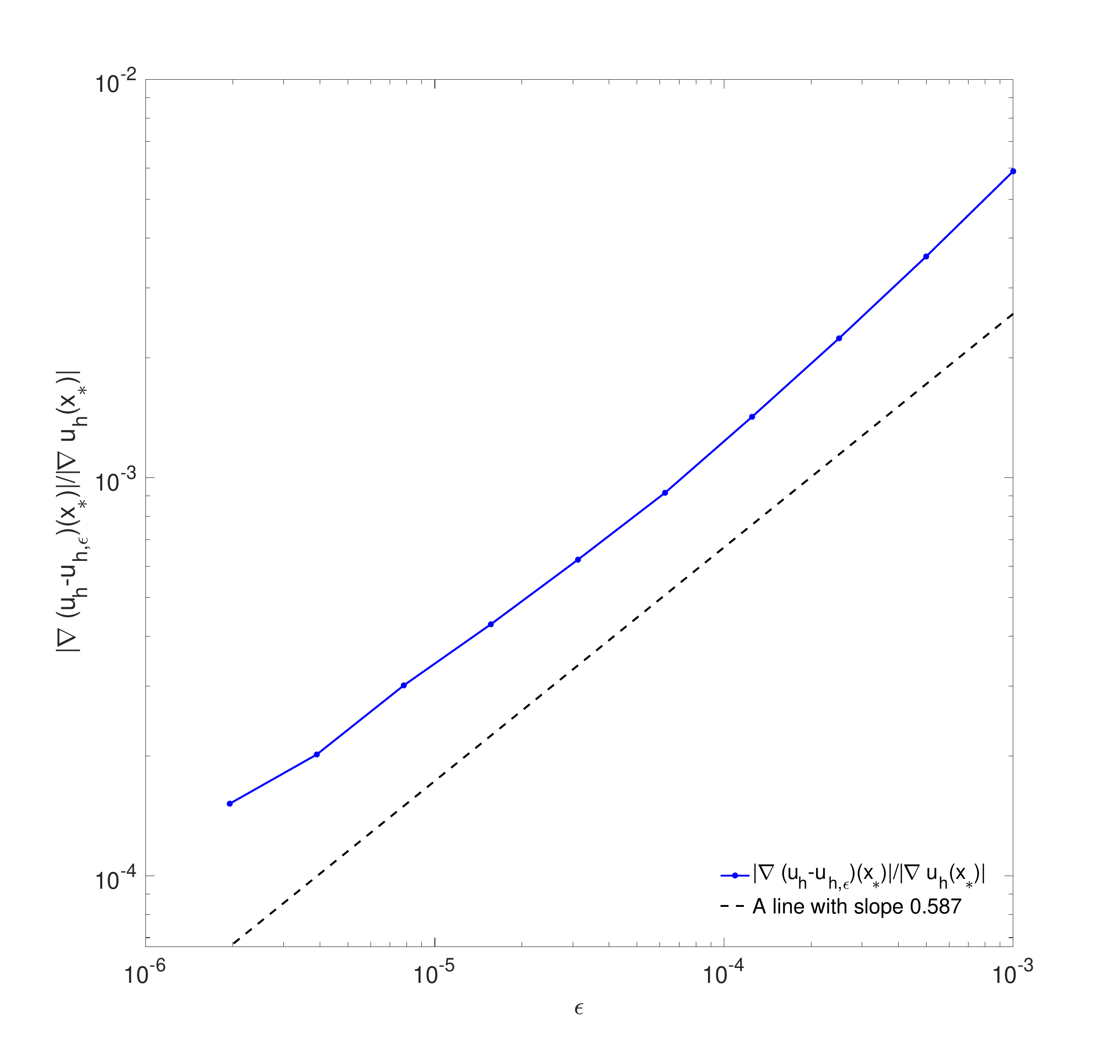}}
\caption{Example 3: The relative energy error (left) and the relative error at $x_*$ (right)} \label{err_H1_interiori}
\end{figure}

\section{Appendix}\label{sec_geo}

In this appendix, we study the influence of the geometric modeling error in the setting of Example \ref{example3}. Let $D_1,D_2\subset\R^2$ be two Lipschitz domains so that $D_1\subset D_2$, see Fig.\ref{fig:5.1}. Set $G_1=D_2\backslash\bar D_1$. Let $u_i\in H^1(D_i)$, $i=1,2$, be the solution of the problem
\beq\label{s1}
-\Delta u_i=f_i\ \ \mbox{in }D_i,\ \ \ \ u_i=0\ \ \mbox{on }\pa D_i,
\eeq
where $f_i\in L^2(D_i)$ satisfying $f_2=f_1$ in $D_1$. By Jerison and Kenig \cite[Theorem 0.5]{Jerison}, there exists $p_i>2$ such that $u_i\in W^{1,p_i}(D_i)$ and
\beq\label{s2}
\|u_i\|_{W^{1,p_i}(D_i)}\le C(p_i,D_i)\|f_i\|_{W^{-1,p_i}(D_i)}
\eeq
for some constant $C(p_i,D_i)$ depending on $p_i,D_i$. Here $W^{-1,p_i}(D_i)$ is the dual space of $W^{1,p_i'}_0(D_i)$, where $1/p_i+1/p_i'=1$.

\begin{figure}[ht!]
\centering
\includegraphics[width=0.3\textwidth]{./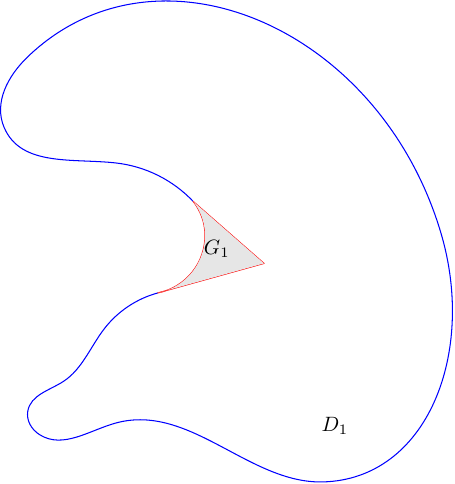}
\caption{Illustration of domains of the problems \eqref{s1}.}\label{fig:5.1}
\end{figure}

Let $B\subset\R^2$ be some circle including $D_2$ and let $\widetilde f_2=f_2\chi_{D_2}$, where $\chi_{D_2}$ is the characteristic function of $D_2$. Let $\widetilde u_1\in H^1(B\backslash\bar D_1)$ be the extension of $u_1$ with continuing flux such that
\beq\label{s3}
-\Delta \widetilde u_1=\widetilde f_2\ \ \mbox{in }B\backslash\bar D_1,\ \ \frac{\pa\widetilde u_1}{\pa n_1}=\frac{\pa u_1}{\pa n_1}\ \ \mbox{on }\pa D_1,\ \ \widetilde u_1=0\ \ \mbox{on }\pa B,
\eeq
where $n_1$ is the unit normal to $\pa D_1$. Since $u_1\in W^{1,p_1}(D_1)$, it is easy to see that $\frac{\pa u_1}{\pa n_1}\in W^{-1/p_1,p_1}(\pa D_1)$ and $\|\pa u_1/\pa n_1\|_{W^{-1/p_1,p_1}(\pa D_1)}\le C\|u_1\|_{W^{1,p_1}(D_1)}$, where $W^{-1/p_1,p_1}(\pa D_1)$ is the dual space of $W^{1/p_1,p_1'}(\pa D_1)$. By Geng \cite[Theorem 1.2]{Geng}, if $f_2\in L^{p_2}(D_2)$, we know that there exists a $q\in (2,\min(p_1,p_2))$, such that
\be
\|\widetilde u_1\|_{W^{1,q}(B\backslash\bar D_1)}
&\le&C(q,D_1)\left(\|\widetilde f_2\|_{L^{q}(B\backslash\bar D_1)}+\Big\|\frac{\pa u_1}{\pa n_1}\Big\|_{W^{-1/q,q}(\pa D_1)}\right)\nn\\
&\le&C(q,D_1)(\|f_2\|_{L^{q}(D_2)}+\|u_1\|_{W^{1,q}(D_1)}),\label{s4}
\ee
where the constant $C(q,D_1)$ depends on $q$ and $B\backslash\bar D_1$.
The following theorem is the main result of this appendix.

\begin{thm}\label{thm:5.1}
Let $\epsilon=|G_1|$ and $\sigma=(q-2)/(2q)$, where $q\in (2,min(p_1,p_2))$. If $f_2\in L^{p_2}(D_2)$ and $u_i, i=1,2$, is the solution of \eqref{s1}, there exists a constant $C$ independent of $u_i,f_i$, $i=1,2$, such that

{\rm (i)} $\|\na(u_1-u_2)\|_{L^2(D_1)}\le C\epsilon^\sigma(\|f_1\|_{W^{-1,p_1}(D_1)}+\|f_2\|_{L^{p_2}(D_2)})$;

{\rm (ii)} $|\na(u_1-u_2)(x_*)|\le Cd^{-2}\epsilon^{2\sigma}(\|f_1\|_{W^{-1,p_1}(D_1)}+\|f_2\|_{L^{p_2}(D_2)})$, where $x_*\in D_1$ with $B(x_*,d)\subset D_1$.
\end{thm}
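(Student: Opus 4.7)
The plan is to derive both estimates through a Green's-identity-plus-flux-extension trick that converts integrals over $D_1$ into integrals over the thin strip $G_1$, then harvests the factor $|G_1|^{1/2-1/q}=\epsilon^{\sigma}$ via H\"older's inequality (once for (i), twice for (ii)).

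Setting $v=u_1-u_2$ on $D_1$, the hypothesis $f_2|_{D_1}=f_1$ makes $v$ harmonic in $D_1$ with $v|_{\partial D_1}=-u_2|_{\partial D_1}$. Green's identity gives $\|\nabla v\|_{L^2(D_1)}^2=\int_{\partial D_1}u_2\,\partial_{n_1}(u_2-u_1)\,dS$, and the flux condition in \eqref{s3} lets me replace $\partial_{n_1}u_1$ by $\partial_{n_1}\tilde u_1$. A second integration by parts on $G_1$, exploiting that $w:=u_2-\tilde u_1$ is harmonic on $G_1$ and that $u_2=0$ on $\partial D_2$, converts the boundary integral into
\[
\|\nabla v\|_{L^2(D_1)}^2 = -\int_{G_1}\nabla u_2\cdot\nabla(u_2-\tilde u_1)\,dx.
\]
H\"older with $\|\cdot\|_{L^2(G_1)}\le\epsilon^{\sigma}\|\cdot\|_{L^q(G_1)}$, combined with \eqref{s2} bounding $\|\nabla u_2\|_{L^q(D_2)}$ by $\|f_2\|_{L^{p_2}}$ and \eqref{s4} bounding $\|\nabla\tilde u_1\|_{L^q(B\setminus\bar D_1)}$ by $\|f_2\|_{L^{p_2}}+\|f_1\|_{W^{-1,p_1}}$, then proves (i) after a square root.

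For (ii), I would start from the interior gradient estimate $|\nabla v(x_*)|\le Cd^{-2}\|v\|_{L^2(D_1)}$, valid because $v$ is harmonic on the ball $B(x_*,d)\subset D_1$, and then bound $\|v\|_{L^2(D_1)}$ by $C\epsilon^{2\sigma}(\cdots)$ via $L^2$-duality. For any $g\in L^2(D_1)$ with $\|g\|_{L^2}=1$, let $z\in H^1_0(D_1)$ solve $-\Delta z=g$ and introduce the flux extension $\tilde z\in H^1(B\setminus\bar D_1)$ (the analog of \eqref{s3} with zero source and Neumann data $\partial_{n_1}z$). Running the same identity as in (i) with $(v,z,\tilde z)$ in place of $(v,u_2,\tilde u_1)$ produces $\int_{D_1}vg\,dx=-\int_{G_1}\nabla u_2\cdot\nabla\tilde z\,dx$. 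Applying the H\"older step $\epsilon^{\sigma}$ \emph{separately to each factor} yields $\epsilon^{2\sigma}$; the two $L^q$ gradients are controlled by \eqref{s2} for $u_2$ and by $\|\tilde z\|_{W^{1,q}(B\setminus\bar D_1)}\le C\|z\|_{W^{1,q}(D_1)}\le C\|g\|_{W^{-1,q}(D_1)}\le C\|g\|_{L^2(D_1)}$, the last embedding using that $q>2$ implies $q'<2$ and hence $W^{1,q'}(D_1)\hookrightarrow L^2(D_1)$.

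The main obstacle I expect is verifying the analog of \eqref{s4} for the dual problem: if $z\in W^{1,q}_0(D_1)$, then its flux-continuation extension $\tilde z$ with zero source and Neumann datum $\partial_{n_1}z\in W^{-1/q,q}(\partial D_1)$ satisfies $\|\tilde z\|_{W^{1,q}(B\setminus\bar D_1)}\le C\|z\|_{W^{1,q}(D_1)}$; this should follow by a direct application of \cite[Theorem 1.2]{Geng} with source zero, in exactly the same manner as \eqref{s4}. A secondary technical point is that the boundary traces $\partial_{n_1}u_i$ and $\partial_{n_1}z$ only live in $W^{-1/q,q}(\partial D_1)$, so all Green's identities above must be read as $W^{1-1/q,q}$--$W^{-1/q,q'}$ duality pairings on the Lipschitz boundary, which is standard.
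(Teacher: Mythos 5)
Your part (i) is essentially the paper's own argument: your identity $\|\na(u_1-u_2)\|_{L^2(D_1)}^2=-\int_{G_1}\na u_2\cdot\na(u_2-\widetilde u_1)$ is exactly \eqref{s11} rewritten, and, like the paper in \eqref{ss}, you end up putting \emph{both} gradient factors in $L^q(G_1)$ (via \eqref{s2} and \eqref{s4}), so the measure factor harvested is $\epsilon^{2\sigma}$ before the square root; your parenthetical ``once for (i)'' undercounts, but the estimates you actually invoke give the right power. Part (ii) is where you genuinely diverge. The paper also starts from the interior gradient estimate $|\na(u_1-u_2)(x_*)|\le Cd^{-2}\|u_1-u_2\|_{L^2(D_1)}$, but its duality argument introduces two auxiliary Dirichlet solutions $w_1$ in $D_1$ and $w_2$ in $D_2$ with datum $(u_1-u_2)\chi_{D_1}$, compares them by rerunning the part-(i) machinery (\eqref{s10}), uses the orthogonality \eqref{s5}, and must estimate a separate boundary term \eqref{s8}; in particular it consumes part (i) in \eqref{s7}. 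You instead take a single dual solution $z\in H^1_0(D_1)$ of $-\Delta z=g$ together with its flux extension $\widetilde z$ across $G_1$, and your identity $\int_{D_1}(u_1-u_2)g=-\int_{G_1}\na u_2\cdot\na\widetilde z$ is correct (it follows from $\int_{D_1}\na(u_1-u_2)\cdot\na z=0$, $u_1=0$ on $\pa D_1$, flux continuity, $u_2=0$ on $\pa D_2$, and harmonicity of $\widetilde z$ in $G_1$); it delivers $\epsilon^{2\sigma}$ in one H\"older application, does not use part (i) at all, and as a by-product bounds $\|u_1-u_2\|_{L^2(D_1)}$ by $C\epsilon^{2\sigma}\|f_2\|_{L^{p_2}(D_2)}$ alone, which is consistent with (indeed formally sharper than) the stated (ii). The price is exactly what you flag: a second application of \cite[Theorem 1.2]{Geng} to $\widetilde z$ with zero source, which goes through verbatim as in \eqref{s4} since the extension domain $B\backslash\bar D_1$ is fixed, together with $\|\pa z/\pa n_1\|_{W^{-1/q,q}(\pa D_1)}\le C(\|z\|_{W^{1,q}(D_1)}+\|g\|_{L^2(D_1)})$ (the normal trace needs the equation $-\Delta z=g$, the same ``easy to see'' step the paper takes for $u_1$ before \eqref{s4}), and the $W^{1,q}$ solvability of the dual Dirichlet problem in $D_1$, which holds because $q<p_1$ lies in the Jerison--Kenig range (the paper sidesteps this by bounding the $W^{1,q}$ norm by the $W^{1,p_i}$ norm in \eqref{s12}). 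With these routine verifications your proof is complete, and for (ii) it is somewhat more economical than the paper's two-auxiliary-problem argument.
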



\begin{proof}
We first show (i). By integrating by parts, since $u_i=0$ on $\pa D_i$, $i=1,2$, and $\Delta (u_1-u_2)=0$ in $D_1$, by \eqref{s3} we have
\beq\label{s11}
\|\na(u_1-u_2)\|_{L^2(D_1)}^2=-\int_{\pa D_1}\frac{\pa (u_1-u_2)}{\pa n_1}u_2=\int_{\pa G_1}\frac{\pa(\widetilde u_1-u_2)}{\pa n}u_2,
\eeq
where $n$ is the unit outer normal to $\pa G_1$. By H\"older inequality, for any $q>2$, $\|v\|_{L^2(G_1)}\le\epsilon^\sigma\|v\|_{L^q(G_1)}\ \ \forall v\in L^q(G_1)$. Now since $\Delta (\widetilde u_1-u_2)=0$ in $G_1$, it follows from \eqref{s11}, \eqref{s1} and \eqref{s4} that
\begin{align}
\|\na(u_1-u_2)\|_{L^2(D_1)}^2=\int_{G_1}\na(\widetilde u_1-u_2)\cdot\na u_2&\le&\epsilon^{2\sigma}\|\na(\widetilde u_1-u_2)\|_{L^q(G_1)}\|\na u_2\|_{L^q(G_1)}\nn\\
&\le&C\epsilon^{2\sigma}(\|f_1\|_{W^{-1,q}(D_1)}+\|f_2\|_{L^{q}(D_2)})^2.\label{ss}
\end{align}
This shows (i) as $q\le \min(p_1,p_2)$. To prove (ii), we use the interior gradient estimate for harmonic functions (see e.g., Axler et al. \cite[Corollary 8.2]{Axler2013}) to obtain $|\na(u_1-u_2)(x_*)|\le Cd^{-2}\|u_1-u_2\|_{L^2(D_1)}$. Now we estimate $\|u_1-u_2\|_{L^2(D_1)}$ by the duality argument. First we note that
\beq\label{s5}
\int_{D_1}\na (u_1-u_2)\cdot\na v=0\ \ \forall v\in H^1_0(D_1).
\eeq
Let $w_j\in H^1_0(D_j)$, $j=1,2$, be the solution of the problems
\beq\label{s6}
-\Delta w_1=u_1-u_2\ \ \mbox{in }D_1,\ \ \ \ -\Delta w_2=(u_1-u_2)\chi_{D_1}\ \ \mbox{in }D_2,
\eeq
where $\chi_{D_1}$ is the characteristic function of $D_1$. By \eqref{s1} we have for $i=1,2$,
\beq\label{s12}
\|w_i\|_{W^{1,q}(D_i)}\le C\|w_i\|_{W^{1,p_i}(D_i)}\le C\|u_1-u_2\|_{W^{-1,p_i}(D_1)}\le C\|u_1-u_2\|_{L^2(D_1)}.
\eeq
By \eqref{ss} and \eqref{s2} we have
\be\label{s10}
\|\na(w_1-w_2)\|_{L^2(D_1)}&\le&C\epsilon^\sigma(\|u_1-u_2\|_{W^{-1,q}(D_1)}+\|u_1-u_2\|_{L^q(D_1)})\nn\\
&\le&C\epsilon^\sigma(\|f_1\|_{W^{-1,p_1}(D_1)}+\|f_2\|_{L^{p_2}(D_2)}).
\ee
Moreover, by \eqref{s1}, $\|w_2\|_{W^{1,p_2}(D_2)}\le C\|u_1-u_2\|_{L^2(D_1)}$. By the second equation in \eqref{s6}
\be\label{s9}
\|u_1-u_2\|_{L^2(D_1)}^2=-\int_{D_1}\Delta w_2(u_1-u_2)=\int_{D_1}\na w_2\cdot\na (u_1-u_2)-\int_{\pa D_1}\frac{\pa w_2}{\pa n_1}u_2.
\ee
By \eqref{s6}, \eqref{s10} and (i) we have
\be\label{s7}
\int_{D_1}\na w_2\cdot\na (u_1-u_2)&\le&\|\na( w_1-w_2)\|_{L^2(D_1)}\|\na(u_1-u_2)\|_{L^2(D_1)}\nn\\
&\le&C\epsilon^{2\sigma}(\|f_1\|_{W^{-1,p_1}(D_1)}+\|f_2\|_{L^{p_2}(D_2)})^2.
\ee
Again, since $u_2=0$ on $\pa D_2$, $\Delta w_2=0$ in $G_1$, by \eqref{s12} we obtain
\be
-\int_{\pa D_1}\frac{\pa w_2}{\pa n_1}u_2=\int_{\pa G_1}\frac{\pa w_2}{\pa n}u_2&=&\int_{G_1}\na u_2\cdot\na w_2\nn\\
&\le&C\epsilon^{2\sigma}\|u_2\|_{W^{1,q}(G_1)}\|w_2\|_{W^{1,q}(G_1)}\nn\\
&\le&C\epsilon^{2\sigma}\|f_2\|_{W^{-1,p_2}(D_2)}\|u_1-u_2\|_{L^2(D_1)}.\label{s8}
\ee
Inserting \eqref{s7}-\eqref{s8} into \eqref{s9} we obtain
\ben
\|u_1-u_2\|_{L^2(D_1)}\le C\epsilon^{2\sigma}(\|f_1\|_{W^{-1,p_1}(D_1)}+\|f_2\|_{L^{p_2}(D_2)}).
\een
This completes the proof.
\end{proof}


\begin{thebibliography}{99}

\bibitem{Adjerid2001}
S. Adjerid, M. Aiffa, and J.E. Flaherty, Hierarchical finite element bases for triangular and tetrahedral elements, {\em Comput. Meth.  Appl. Mech. Engrg.}, 190, 2925--2941, 2001.

\bibitem{Ainsworth}
M. Ainsworth and B. Senior, Aspects of an adaptive $hp$-finite element method: Adaptive strategy, conforming approximation and efficient solvers, {\em  Comput. Meth.  Appl. Mech. Engrg.}, 150, 65--87, 1997.

\bibitem{Axler2013}
S. Axler, P. Bourdon, and R. Wade, Harmonic Function Theory, Springer Science \& Business Media, 2013.

\bibitem{Babu}
I. Babu\v{s}ka, A. Craig, J. Mandel, and J. Pit\"aranta,
{Efficient preconditioning for the $p$ version finite element method in two dimension,}
{\em SIAM J. Numer. Anal.}, 28, 624--661, 1991.

\bibitem{Babu87}
I. Babu\v{s}ka and A. Miller,
\newblock{A feedback finite element method with a posteriori error estimation, Part I. The finite element method and some basic properties of the a posteriori error estimator,} {\em Comput. Meth.  Appl. Mech. Engrg.}, {61}, 1--40, 1987.

\bibitem{Babuska1978}
I. Babu\v{s}ka and W. C. Rheinboldt, Error estimates for adaptive finite element computations, {\em SIAM J. Numer. Anal.}, 15, 736--754, 1978.

\bibitem{Badia22}
S. Badia, E. Neiva, and F. Verdugo, Robust high-order unfitted finite elements by interpolation based discrete extension, {\em Comput. Math. Appl.}, 127, 105--126, 2022.

\bibitem{Badia2018}
S. Badia, F. Verdugo, A. F. Mart\'in, The aggregated unfitted finite element method for elliptic problems, {\em Comput. Methods Appl. Mech. Engrg.}, 336, 533--553, 2018.

\bibitem{Bansch}
E. B\"ansch, Local mesh refinement in 2 and 3 dimensions, {\em Impact Comput. Sci. Eng.}, 3, 181--191, 1991.

\bibitem{Belytschko2001}
T. Belytschko, N. Mo\"es, S. Usui, and C. Parimi,
\newblock{Arbitrary discontinuities in finite elements,}
\newblock{ \em Internat. J. Numer. Methods Engrg.}, 50, 993--1013, 2001.

\bibitem{Binev}
P. Binev, W. Dahmen, and R. DeVore, Adaptive finite element methods with convergence rates, {\em Numer. Math.}, 97, 219--268, 2004.

\bibitem{Bonito}
A. Bonito and R.H. Nochetto,
\newblock{Quasi-optimal convergence rate of an adaptive discontinuous Galerkin method,}
\newblock{\em SIAM J. Numer. Anal.}, {48}, 734--771, 2010.

\bibitem{BurmanHHO21}
E. Burman, M. Cicuttin, G. Delay, and A. Ern, An unfitted hybrid high-order method with cell agglomeration for elliptic interface problems, {\em SIAM J. Sci. Comput.}, 43, A859--A882, 2021.

\bibitem{Burman2018SINUM}
E. Burman and A. Ern, An unfitted hybrid high-order method for elliptic interface problems, {\em SIAM J. Numer. Anal.}, 56, 1119--1140, 2018.

\bibitem{Burman2010}
E. Burman and P. Hansbo, Fictitious domain finite element methods using cut elements, I. A stabilized Lagrange multiplier method, {\em Comput. Meth. Appl. Mech. Engrg.}, 199, 2680--2686, 2010.

\bibitem{Burman2012}
E. Burman and P. Hansbo, Fictitious domain finite element methods using cut elements, II. A stabilized Nitsche method, {\em Appl. Numer. Math.}, 62, 328--341, 2012.

\bibitem{Burman2022}
E. Burman, C. He, and M.G. Larson. A posteriori error estimates with boundary correction for a cut finite element method, {\em IMA J. Numer. Anal.}, 42, 333--362, 2022.

\bibitem{Nochetto}
J.M. Cascon, C. Kreuzer, R.H. Nochetto, and K.G. Siebert, Quasi-optimal convergence rate for an adaptive finite element method, {\em SIAM J. Numer. Anal.}, 46, 2524-2550, 2008.

\bibitem{Chen2021NM}
Z. Chen, K. Li, and X. Xiang, An adaptive high-order unfitted finite element method
for elliptic interface problems, {\em Numer. Math.}, 149, 507--548, 2021.

\bibitem{Chen2023JCP}
Z. Chen and Y. Liu, An arbitrarily high order unfitted finite element method for elliptic interface problems with automatic mesh generation,  {\em J. Comput. Phys.}, 491, 112384, 2023.

\bibitem{Chen2023waveMC}
Z. Chen, Y. Liu, and X. Xiang, A high order explicit time finite element method for the acoustic wave equation with discontinuous coefficients,  arXiv:2112.02867v2.

\bibitem{Chen2009}
Z. Chen, Y. Xiao, and L. Zhang, The adaptive immersed interface finite element method for elliptic and Maxwell interface problems, {\em J. Comput. Phys.}, 228, 5000--5019, 2019.

\bibitem{ChenWu}
Z. Chen and H. Wu, Selected Topics in Finite Element Methods, Science Press, Beijing, 2010.

\bibitem{Cockburn}
B. Cockburn and C.-W. Shu, The local discontinuous Galerkin finite element method for time-dependent convection-diffusion systems, {\em SIAM J. Numer. Anal.}, 35, 2440--2463, 1998.

\bibitem{Ern2015}
A. Ern and M. Vohralik, Polynomial-degree-robust a posteriori estimates in a unified setting for conforming, nonconforming, discontinuous Galerkin, and mixed discretizations, {\em SIAM J. Numer. Anal.}, 53, 1058--1081, 2015.

\bibitem{Geng}
J. Geng, $W^{1,p}$ estimates for elliptic problems with Neumann boundary conditions in Lipschitz domains, {\em Adv. Math.}, 229, 2427-2448, 2012

\bibitem{Gurken}
C. G\"urken, S. Sticko, and A. Massing, Stabilized cut discontinuous Galerkin method for advection-reaction problems, {\em SIAM J. Sci. Comput.}, 42, A2620--2654, 2020.

\bibitem{Hansbo}
A. Hansbo and P. Hansbo, \newblock{An unfitted finite element method, based on Nitsche's method,
for elliptic interface problems,} {\em Comput. Meth. Appl. Mech.
Engrg.}, 191, 5537--5552, 2002.

\bibitem{HeJSC2019}
C. He and X. Zhang, Residual-based a posteriori error estimation for immersed finite element methods, {\em J. Sci. Comput.}, 81, 2051--2079, 2019.

\bibitem{Huang}
P. Huang, H. Wu, and Y. Xiao, An unfitted interface penalty finite element method for elliptic interface problems, {\em Comput. Meth. Appl. Mech. Engrg.}, 194, 4135--4195, 2005.

\bibitem{Jerison}
D. Jerison and C.E. Kenig, The inhomogeneous Dirichlet Problem in Lipschitz domains, {\em J. Func. Anal.}, 130, 161-219, 1995.

\bibitem{Johansson}
A. Johansson and M. G. Larson, A high order discontinuous Galerkin Nitsche method for elliptic problems with fictitious boundary, {\em Numer. Math.}, 123, 607--628, 2013.

\bibitem{Pascal}
O.A. Karakashian and F. Pascal, Convergence of adaptive discontinuous Galerkin approximations of second order elliptic problems, {\em SIAM J. Numer. Anal.}, 45, 641--665, 2007.

\bibitem{Li2006}
Z. Li and K. Ito, The immersed interface method: numerical solutions of PDEs involving interfaces and irregular domains, {SIAM, Philadephia}, 2006.

\bibitem{LiNM2003}
Z. Li, T. Lin, and X. Wu, New Cartesian grid methods for interface problems using finite element formulation,
{\em Numer. Math.}, 96, 61--98, 2003.

\bibitem{LiuJCP2000}
X. Liu, R. P. Fedkiw, and M. Kang, A boundary condition capturing method for Poisson's equation on irregular domains, {\em J. Comput. Phys.}, 160, 151--178, 2000.

\bibitem{Melenk2005}
J.M. Melenk, $hp$-interpolation of non-smooth functions and an application to $hp$-a posteriori error estimation, {\em SIAM J. Numer. Anal.}, 43, 127--155, 2005.

\bibitem{Melenk2001}
J.M. Melenk and B.I. Wohlmuth, On residual-based a posteriori error estimation in $hp$-FEM, {\em Adv. Comput. Math.}, 15, 311--331, 2001.

\bibitem{Mittal2005}
R. Mittal and G. Iaccarino, Immersed boundary methods, {\em Annu. Rev. Fluid Mech.}, 37, 239--261, 2005.


\bibitem{Stevenson}
R. Stevenson, The completion of locally refined simplicial partitions created by bisection, {\em Math. Comp.,} 77, 227--241, 2008.

\bibitem{Szabo2011book}
B. Szab\'o and I. Babu\v{s}ka, Introduction to Finite Element Analysis: Formulation, Verification and Validation, John Wiley \& Sons, 2011.

\bibitem{Wang}
Y. Xiao, J. Xu, and F. Wang, High-order extended finite element method for solving interface problems, {\em Comput. Meth. Appl. Mech. Engrg.,} 364, 112964, 2020.

\end{thebibliography}
\end{document}